\documentclass[reqno]{amsart}
\usepackage{amssymb,amscd,verbatim,graphics,color,longtable}

\begin{document}

\setcounter{secnumdepth}{2}


\newcommand \und[2]{\underset{#1} {#2}}
\newcommand \fk[1]{{{\mathfrak #1}}}
\newcommand \C[1]{{\mathcal #1}}
\newcommand \ovl[1]{{\overline {#1}}}
\newcommand \ch[1]{{\check{#1}}}
\newcommand \bb[1]{{\mathbb #1}}
\newcommand \sL[1]{{^L {#1}}}
\newcommand \sT[1]{{^t {#1}}}
\newcommand \sV[1]{{^\vee{#1}}}

\newcommand \ti[1]{{\tilde #1}}
\newcommand \wti[1]{{\widetilde {#1}}}
\newcommand \un[1]{\underline{#1}}
\newcommand \unb[2]{\underset{#1}{{\underbrace{#2}}}}
\newcommand \ovb[2]{\overset{#1}{\overbrace{#2}}}

\newcommand\fg{\mathfrak g}
\newcommand\fh{\mathfrak h}
\newcommand\fa{\mathfrak a}
\newcommand\fz{\mathfrak z}
\newcommand\fm{\mathfrak m}

\newcommand \bA{{\mathbb A}}
\newcommand \bC{{\mathbb C}}
\newcommand \bF{{\mathbb F}}
\newcommand \bH{{\mathbb H}}
\newcommand \bR{{\mathbb R}}
\newcommand \bZ{{\mathbb Z}}

\newcommand\cha{{\check \alpha}}
\newcommand\chb{{\check \beta}}

\newcommand\one{1\!\!1}

\newcommand\CA{{\C A}}
\newcommand\CB{{\C B}}
\newcommand\CH{{\C H}}
\newcommand\CI{{\C I}}
\newcommand\CO{{\C O}}
\newcommand\CS{{\C S}}
\newcommand\CW{{\C W}}
\newcommand\CX{{\C X}}
\newcommand\CG{{\C G}}
\newcommand\CU{{\C U}}

\newcommand\CCI{{\C C(\CI)}}

\newcommand \LA{{\sL A}}
\newcommand\Lfa{{\sL {\fk a}}}
\newcommand \LG{{\sL G}}
\newcommand \LM{{\sL M}}
\newcommand \LB{{\sL B}}
\newcommand \Lg{{\sL{\fk g}}}

\newcommand\Ls{{\it C-symbol~ }}
\newcommand\DS{{\it Discrete series~ }}
\newcommand\Tr{{\it Tempered Representation~ }}
\newcommand\SpC{{\it Springer Correspondence~ }}
\newcommand\Sps{{\it S-symbol~ }}
\newcommand\LKT{{\it lowest K-type~ }}
\newcommand\LKTs{{\it lowest K-types~ }}
\newcommand\St{{\it Steinberg Representation~ }}
\newcommand\Sts{{\it Steinberg Representations~}}
\newcommand\sde[2]{{\ ^#1{#2}}}

\newcommand\ie{{\it i.e.}}
\newcommand\cf{{\it cf.~ }}
\newcommand\eg{{\it e.g.}}

\newcommand\ep{{\epsilon}}
\newcommand\la{{\lambda}}
\newcommand\om{{\omega}}
\newcommand\ome{{\omega}}
\newcommand\al{{\alpha}}
\newcommand\sig{{\sigma}}

\newcommand \vG{{\check G}}
\newcommand \vM{{\check M}}
\newcommand\oX{\ovl{X}}
\newcommand \vg{{\check \fk g}}
\newcommand \bXO{{\overline{X}(\C O)}}
\newcommand\bX{{\ovl{X}}}

\newtheorem*{theorem}{Theorem}
\newtheorem*{theorem 2}{Theorem 2}
\newtheorem*{corollary}{Corollary}
\newtheorem*{lemma}{Lemma}
\newtheorem*{proposition}{Proposition}
\newtheorem*{remark}{Remark}
\newtheorem*{definition}{Definition}
\newtheorem*{example}{Example}

\newcommand\Hom{\operatorname{Hom}}
\newcommand\Ind{\operatorname{Ind}}

\numberwithin{equation}{subsection}


\begin{title}[On unitary unipotent representations of $p$-adic groups]{On
    unitary unipotent representations of $p$-adic groups and affine Hecke algebras with unequal
    parameters}\end{title}

\author{Dan Ciubotaru}
\address[Dan Ciubotaru]{Department of Mathematics\\ University of
  Utah\\Salt Lake City, UT 84112}
\email{ciubo@math.utah.edu}

\date{\today}


\begin{abstract}
We determine the unitary dual of the geometric graded Hecke algebras with
{unequal} parameters which
appear in Lusztig's classification of unipotent representations for
{exceptional} $p$-adic groups. The largest such algebra is of type
$F_4.$ Via the Barbasch-Moy correspondence of unitarity
applied to this setting,  this
is equivalent to the identification of the corresponding unitary
unipotent representations with real
central character of the $p$-adic groups. In order for this
correspondence to be applicable here, we show (following Lusztig's
geometric classification, and Barbasch and Moy's original argument)
that the set of tempered modules with real central character for a
geometric graded Hecke algebra is linearly independent when restricted
to the Weyl group.
\end{abstract}

\maketitle

\setcounter{tocdepth}{1}

\begin{small}
\tableofcontents 
\end{small}

\footnotetext{2000 {\it Mathematics Subject Classification}. Primary 22E50.}

\section{Introduction}\label{sec:0}

\subsection{}The category of representations with unipotent cuspidal
support for the rational forms inner to a 
split $p$-adic group of adjoint type is equivalent to a
direct product of categories of affine Hecke algebras with
unequal parameters (\cite{L6}).  We are interested in the problem of
identifying the unitary representations in this category.

Let us briefly recall the Deligne-Langlands-Lusztig correspondence first. Let $\bF$ denote a finite
extension of the $p$-adic numbers, with residue field $\bF_q.$ Let
$\mathcal G$ denote a connected adjoint quasi-simple algebraic group
defined over the algebraic completion $\overline \bF.$ Let $\Psi_s$ be the inner class of
$\mathcal G$ which contains the split $\bF$-form of $\C G.$ For every
$\C G(\bF)\in \Psi_s,$ one defines the category of unipotent
representations $\mathsf{Uni}(\C G(\bF))$ as follows. For every
parahoric subgroup $J$ of $\C G(\bF)$, let $\overline J$ denote the
reductive quotient (a reductive group over $\bF_q)$. For every
unipotent cuspidal representation $\overline \rho$ of $\overline J$ (in the sense of
Deligne-Lusztig), let $\rho$ be the representation obtaining by
pulling back to $J.$ (The unipotent cuspidal representations appear
very rarely.) Then let $\mathsf{Uni}_{(J,\rho)}(\C G(\bF))$ denote the
category of admissible representations $\pi$ of $\C G(\bF)$ such that
$\Hom_J[\pi:\rho]$ is nonzero and generates $\pi.$ Set $\mathsf{Uni}(\C
G(\bF))=\sqcup_{(J,\rho)}\mathsf{Uni}_{(J,\rho)}(\C G(\bF)),$ and
$\mathsf{Uni}(\C G)=\sqcup_{\C G(\bF)\in\Psi_s}\mathsf{Uni}(\C
G(\bF)).$ This is the ``arithmetic side'' of the correspondence.

For every $(J,\rho)$ that appears, set $\CH(\C G(\bF),J,\rho)=\operatorname{End}
(\Ind _J^{\C G(\bF)}(\rho)).$ This is an affine Hecke algebra with
unequal parameters. 
The prototype is the subcategory of
representations generated by their vectors fixed under an Iwahori subgroup, that is, 
$\mathsf{Uni}(\C G(\bF),I,triv),$ where $I$ is an Iwahori
subgroup. The irreducible objects in this category are precisely the
subquotients of the unramified principal series of $\C G(\bF).$ By
the fundamental results of Borel and Casselman, there is an
equivalence of categories between $\mathsf{Uni}(\C
G(\bF),I,triv)$ and $\CH(\C G(\bF),I,triv)$-mod given by the functor
$\pi\mapsto \pi^I:=\Hom_I[\pi:triv].$ In general, by the
results of Morris and Moy-Prasad, there is an equivalence of
categories between  $\mathsf{Uni}_{(J,\rho)}(\C G(\bF))$ and $\CH(\C
G(\bF),J,\rho)$-mod given by $\pi\mapsto \Hom_J[\pi:\rho].$

\smallskip

On the ``geometric side'', one considers the complex simply-connected group $G$ with
 root datum dual to $\C G.$ Kazhdan-Lusztig in the case of equal
parameters, and Lusztig in general, gave a geometric construction of
affine Hecke algebras, respectively affine graded Hecke algebras
starting from $G.$  Let $L$ be a quasi-Levi subgroup of $G$ (i.e, the
centralizer of a semisimple element in $G$), with Lie
algebra $\fk l,$ $\C C$ a nilpotent class in $\fk l$ which admits a
cuspidal local system $\C L.$ (These are very rare.)  Then one
attaches an affine Hecke algebra $\C H(G,L,\C C,\C L)$ with unequal
parameters, whose representation theory is classified in terms of
perverse sheaves on certain graded nilpotent orbits of $\fg$. The truly
remarkable realization
of \cite{L6} is that the two collections of affine Hecke algebras
$\{\CH(\C G(\bF),J,\rho): G(\bF)\in\Psi_s, (J,\rho)\}$ and $\{\C
H(G,L,\C C,\C L): (L,\C C,\C L)\}$ coincide, and via this
identification, one translates the geometric classification to
$\mathsf{Uni}(\C G).$

\begin{theorem}[\cite{L6}]\label{t:dll} There is a one to one correspondence
  between $\mathsf{Uni}(\C G)$ and $G$-conjugacy classes in
\begin{equation}\label{eq:dll}
\{(s,e,\psi): s\in G \text{ semisimple}, e\in \fg, Ad(s)e=qe,
\psi\in \widehat{A(s,e)}\}.
\end{equation}
\end{theorem}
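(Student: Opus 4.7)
The plan is to establish the correspondence by a three-step reduction, each step of which is already sketched in the introduction: (i) reduce $\mathsf{Uni}(\C G)$ to modules over arithmetic affine Hecke algebras, (ii) identify each arithmetic Hecke algebra with a geometric Hecke algebra attached to a cuspidal datum on the dual side, and (iii) apply Lusztig's geometric classification of simple modules of $\C H(G,L,\C C,\C L)$ in terms of triples $(s,e,\psi)$. The output of the three steps, concatenated, is precisely the bijection asserted in the theorem.

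For step (i) I would invoke the Morris--Moy--Prasad equivalence already recalled in the introduction: for each $(J,\rho)$ arising from a unipotent cuspidal local system on the reductive quotient $\overline J$, the functor $\pi\mapsto \Hom_J[\pi:\rho]$ induces an equivalence of $\mathsf{Uni}_{(J,\rho)}(\C G(\bF))$ with the module category of $\CH(\C G(\bF),J,\rho) = \operatorname{End}(\Ind_J^{\C G(\bF)}\rho)$. Taking the disjoint union over all pairs $(J,\rho)$ (up to conjugacy) and over all rational forms $\C G(\bF)\in\Psi_s$, one reduces the classification of $\mathsf{Uni}(\C G)$ to the classification of simple modules of the collection of affine Hecke algebras with unequal parameters $\{\CH(\C G(\bF),J,\rho)\}$.

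For step (ii), one must invoke the main identification of \cite{L6}: the family of arithmetic algebras $\{\CH(\C G(\bF),J,\rho)\}$ coincides with the family of geometric algebras $\{\C H(G,L,\C C,\C L)\}$ attached to cuspidal data $(L,\C C,\C L)$ on the complex dual group $G$. This matching is the subtle combinatorial and geometric heart of the theorem, and I expect it to be the main obstacle: it requires matching the Iwahori--Weyl group data (affine Weyl group, generators and braid/quadratic relations, parameters) of each parahoric construction on the arithmetic side with the equivariant cohomology algebra determined by the cuspidal local system on the geometric side, and in particular checking that no cuspidal datum is missed and none is counted twice. On the Iwahori case this reduces to Borel--Casselman plus Kazhdan--Lusztig; the general case is more delicate because the parameters of $\CH(\C G(\bF),J,\rho)$ recorded by Lusztig depend on the specific cuspidal $\overline\rho$ and its $\bF_q$-rational structure, and the nontrivial inner forms appear through automorphisms of the cuspidal data.

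Finally, step (iii) applies Lusztig's geometric classification: the simple modules of $\C H(G,L,\C C,\C L)$ are parametrized, up to $G$-conjugacy, by triples $(s,e,\psi)$ with $s\in G$ semisimple, $e\in\fg$ with $Ad(s)e=qe$, and $\psi$ an irreducible representation of the component group $A(s,e)$ that appears in the perverse sheaf corresponding to $(L,\C C,\C L)$ under the generalized Springer correspondence. Assembling the three steps and taking the disjoint union over all cuspidal data exhausts the right-hand side of \eqref{eq:dll} exactly once, giving the asserted bijection. The bijection is functorial enough that structural features of representations (e.g.\ temperedness, discrete series, real central character) on the arithmetic side can be read off from invariants of $(s,e,\psi)$ on the geometric side, which is what subsequent sections of the paper will exploit.
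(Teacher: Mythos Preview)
The paper does not give its own proof of this theorem: it is stated with attribution to \cite{L6} and serves as background for the rest of the paper. There is therefore no ``paper's own proof'' to compare against. Your three-step outline (reduce $\mathsf{Uni}(\C G)$ to modules over arithmetic Hecke algebras via Morris--Moy--Prasad; match these algebras with the geometric Hecke algebras $\C H(G,L,\C C,\C L)$ via \cite{L6}; classify the simple modules of the latter via Lusztig's geometric parameters) is an accurate summary of the architecture of the correspondence as the introduction describes it, and is in fact the argument of \cite{L6} itself. One small point: in step (iii) you say the union over all cuspidal data exhausts the right-hand side of \eqref{eq:dll} exactly once; the precise statement is that by the generalized Springer correspondence \cite{L5} the set $\widehat{A(s,e)}$ is partitioned by the cuspidal data, so no $\psi$ is missed or repeated --- this is implicit in your outline but is the key point making the union a bijection.
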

Note that $e\in \fg$ is necessarily nilpotent. In (\ref{eq:dll}),
$A(s,e)$ denotes the group of components of the centralizer in $G$ of
$s$ and $e.$ 

\medskip

\noindent{\bf Remarks.} 

\noindent a) We
emphasize that the left hand (arithmetic) side of the correspondence
is a union over all rational forms of $\C G$ inner to the split
one. 

\noindent b) An individual piece $\mathsf{Uni}(\C G(\bF))$, corresponding to a
fixed rational form, is distinguished by the action of $\psi\in
\widehat{A(s,e)}$ on the center $Z(G).$  For example, the split form
$G(\bF)^s$ is parameterized by those $\psi$ such that $\psi|_{Z(G)}=triv.$ The
Iwahori-spherical subcategory $\mathsf{Iw}(\C G(\bF)^s)$  of the split form is parameterized by the
representations $\psi$ of Springer type, i.e., those which appear in
the natural action of $A(s,e)$ on the Borel-Moore homology of the
variety $\C B_{s,e}$ of Borel subgroups of $G$ containing both $s$ and
$e$ (\cite{KL}). The trivial $A(s,e)$-representation is of Springer type.

\noindent c) By \cite{Re2} (also \cite{BM4} in the Iwahori-spherical case), the representations in
$\mathsf{Uni}(\C G(\bF)^s)$ which are generic in the sense that they admit
Whittaker models, are parameterized in theorem \ref{t:dll} by
$(s,e,\psi)$, where $e$ is in the unique open orbit of $Z_G(s)$ on
$\fg_q:=\{x\in\fg: Ad(s)x=qs\}$, and $\psi$ is trivial. Here $Z_G(s)$
denotes the centralizer of $s$ in $G.$ In particular,
these representations are all Iwahori-spherical.

\noindent d) The ($K$-)spherical representations of $\C G(\bF)^s$ correspond
to $(s,e,\psi)$ with $e=0$ and $\psi=triv.$ They are Iwahori-spherical
too. The Iwahori-Matsumoto involution interchanges spherical and
generic representations and by \cite{BM1,BM2}, it preserves unitarity.  

\noindent e) A first calculation we present (in section \ref{sec:4})
as an example  concerns unitary representations for $\C
G=E_6.$ The dual $G$ is also type $E_6$, simply connected and
$Z(G)=\bZ/3.$ Let $E_6^{nqs}$ denote any one of the two non-quasisplit form of type
$E_6$ (denoted $^3E_6$ in the tables of \cite{Ti}).  On the arithmetic
side, $\mathsf{Iw}(E_6^{nqs})$ is the category of Iwahori-spherical
representations of $E_6^{nqs}.$  On the geometric side, by \cite{L5}, there exist two
cuspidal local systems for the Levi $L=2A_2$ in $E_6$ (notation as in
\cite{Ca}), and the geometric Hecke algebras are isomorphic. The two
cuspidal local systems (actually representations of the component
group) are distinguished by how they act on $Z(G)$.
By \cite{L6}, the geometric Hecke algebra and the Iwahori-Hecke are
identical, of type $G_2$ with certain unequal parameters
(see the tables in \cite{Ti}). At the level of geometric
parameters (\ref{eq:dll}), the component group representations $\psi$
which appear, act on $Z(G)$ like a fixed character of $\bZ/3$ of order $3$.

\noindent f) The
hardest calculation in this paper refers to the case of unitary
representations for $\mathcal G=E_7.$
The dual $G$ is also of type $E_7$, simply connected. Then $Z(G)=\bZ/2,$
and the two representations of $Z(G)$ correspond to the two rational
forms in the split inner class. Let us denote by
$E_7^{ns}$ the nonsplit form. On the arithmetic side,
$\mathsf{Iw}(E_7^{ns})$ is the category of Iwahori-spherical
representations of $E_7^{ns}.$ The corresponding Iwahori-Hecke algebra
is of type $F_4$ with certain unequal parameters  On the geometric side, there exists a cuspidal
local system for the Levi $L=(3A_1)''$  in
$E_7.$ The affine Hecke algebra constructed from it has the same
presentation with generators and relations as the Iwahori-Hecke one
for $E_7^{ns}.$ At the level of geometric parameters (\ref{eq:dll}),
the component group representations $\psi$ which are nontrivial on
$Z(G)$ appear.  The nilpotent orbits of $E_7$ which allow such
representations are in figure \ref{3A1nil}.

\noindent g) By changing short roots with long roots in an affine
Hecke algebra which is not simply-laced, and scaling the unequal parameters accordingly, one
obtains isomorphic Hecke algebras. Using this isomorphism, the same
Hecke algebra as in e) controls two subcategories of unipotent
representations for $\C G(\bF)=E_8^s$ (split) and a parahoric of type
$E_6$, while the Hecke algebra in f) controls  a subcategory for $\C
G(\bF)=E_8^s$ and a parahoric of type $D_4.$ The weight structure of
the discrete series modules for these latter Hecke algebras, and their
formal degrees, were calculated in \cite{Re1}. We verified that the dimensions of the
discrete series, which we obtained by finding their $W$-structure,
agree with the dimensions listed in \cite{Re1}. Note also that by the
Barbasch-Moy transfer of unitarity (see section \ref{sec:0.2}) one
can obtain interesting correspondences between unitary representations
whenever two categories of unipotent representations are controlled by
the same affine Hecke algebra.

\subsection{}\label{sec:0.2}
The Iwahori-Hecke algebra is a convolution algebra of complex-valued functions, and
therefore has a natural $*$-operation: $f\mapsto f^*,$
$f^*(g):=\overline {f(g^{-1})}.$ One defines hermitian and unitary
modules with respect to this $*$-operation.
The approach to the determination of Iwahori-spherical unitary representations of a
split $p$-adic  group
via a reduction to Iwahori-Hecke algebra modules with real
central (``infinitesimal'') character was introduced in \cite{BM1,BM2}. It
is based on the geometric classification, and the use of
the graded Hecke algebra of \cite{L1}.

Let us recall succinctly the machinery of Barbasch-Moy's preservation
of unitarity under the real central character assumption. The central characters of $\CH(G,I,triv)$ are in one to
one correspondence with $G$-conjugacy classes of semisimple elements
$s$ in $G$. This is the semisimple element $s$ appearing in the
correspondence (\ref{eq:dll}). We say that a central character $s$ is
real if the elliptic part of $s$ in the polar decomposition is central
in $G$. The element $s$ is sometimes referred to as the infinitesimal
character of the representation.

\begin{theorem}[\cite{BM1}]\label{t:bm} In the equivalence of categories $\pi\mapsto
  \pi^I$ between
  $\mathsf{Iw}(\C G(\bF)^s)$ and $\CH(G,I,triv)$-mod, hermitian and
  unitary modules with real infinitesimal character correspond, respectively. 
\end{theorem}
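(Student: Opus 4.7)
The plan is to verify that the Borel--Casselman equivalence $\pi\mapsto\pi^I$ intertwines Hermitian duality on both sides, and then to transfer positivity using the Langlands classification; the real infinitesimal character hypothesis is the critical ingredient at both stages.

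\textbf{Easy direction.} The $*$-operation $f^*(g)=\overline{f(g^{-1})}$ on $\CH(G,I,triv)$ is arranged so that, for any $\C G(\bF)$-invariant Hermitian form $\langle\cdot,\cdot\rangle_\pi$ on $\pi$, the restriction to $\pi^I$ satisfies $\langle f\cdot v,w\rangle_\pi=\langle v,f^*\cdot w\rangle_\pi$ for $f\in\CH(G,I,triv)$ and $v,w\in\pi^I$; this is a direct computation from the convolution formula and the unimodularity of Haar measure on $\C G(\bF)$. Thus restriction maps $\C G(\bF)$-invariant Hermitian forms on $\pi$ to $\CH(G,I,triv)$-invariant Hermitian forms on $\pi^I$, and the map is injective because $\pi^I$ generates $\pi$ as a $\C G(\bF)$-module. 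Positive definiteness is manifestly preserved, giving the ``unitary implies unitary'' direction at no cost and without using the real infinitesimal character assumption.

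\textbf{Converse: existence of a Hermitian form.} The smooth Hermitian dual $\pi^h$ is again admissible, and a direct check yields $(\pi^h)^I\cong(\pi^I)^h$ canonically as $\CH(G,I,triv)$-modules, so $\pi\mapsto\pi^I$ intertwines Hermitian duality. The real infinitesimal character hypothesis ensures that $\pi^h$ shares its infinitesimal character with $\pi$ (otherwise $\pi^h$ would be parameterized via Theorem~\ref{t:dll} by the complex conjugate semisimple class in $G$) and hence lies in the same Bernstein block of $\mathsf{Iw}(\C G(\bF)^s)$. The Borel--Casselman equivalence then lifts an $\CH(G,I,triv)$-isomorphism $\pi^I\cong(\pi^I)^h$ uniquely to a $\C G(\bF)$-isomorphism $\pi\cong\pi^h$, i.e., to a $\C G(\bF)$-invariant Hermitian form on $\pi$.

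\textbf{Converse: positivity, and the main obstacle.} This is the substantive step. Reducing to the irreducible case, write $\pi$ as the Langlands quotient of a standard module $X_P(\sigma,\nu)$ with $\sigma$ tempered Iwahori-spherical on a Levi $M$ of $\C G(\bF)$ and $\nu$ a real character of the split torus of $M$. The Hermitian form on $X_P(\sigma,\nu)$ produced by the long intertwining operator descends to the (up to scalar unique) form on $\pi$, and its signature can be read off entirely inside $X_P(\sigma,\nu)^I$ via the action of Hecke-algebra intertwiners. Because $\nu$ is real, this intertwining operator is a real-rational function of $\nu$, so the form on $\pi$ manufactured in the previous step and the originally given form on $\pi^I$ agree up to a positive scalar --- pinned down by their common value on an anchor vector, such as a minimal $K$-type of $\pi$, whose projection to $\pi^I$ is nonzero by Frobenius reciprocity. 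The principal obstacle is precisely this final normalization: ruling out a global sign flip between the abstractly reconstructed form on $\pi$ and the given form on $\pi^I$. The real-parameter hypothesis is exactly what prevents such a flip, since the governing intertwining operator takes real values along the real axis, and positivity on $\pi^I$ therefore forces positivity on all of $\pi$.
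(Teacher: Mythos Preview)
Your proposal has a genuine gap in the positivity step. You correctly identify that the easy direction and the existence of a Hermitian form on $\pi$ are routine; the paper agrees that these ``other implications are straightforward.'' But your argument for why positivity on $\pi^I$ forces positivity on all of $\pi$ does not work as written. The obstacle is \emph{not} merely ``ruling out a global sign flip.'' The form on $\pi$ is a form on an infinite-dimensional space, and its restriction to the finite-dimensional $\pi^I$ is (by the easy direction) the given positive form; the question is whether positivity on that small piece propagates to positivity on every $K$-type of $\pi$. Nothing in your intertwining-operator discussion addresses this: the fact that the long intertwiner is a real-rational function of $\nu$ tells you the form is real, not that it is positive on $K$-types that do not meet $\pi^I$.

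The argument in \cite{BM1}, as sketched in the paper immediately after the statement, is substantially different and this is where the real infinitesimal character hypothesis actually does its work. One uses Vogan's signature character $\Sigma(\pi)$ and expresses it as a pair of virtual tempered characters $(\sum a_i\theta_K(\pi_i),\sum b_i\theta_K(\pi_i))$ by induction on the Langlands parameter. The same integers $a_i,b_i$ appear in $\Sigma_W(\pi^I)=(\sum a_i\theta_W(\pi_i^I),\sum b_i\theta_W(\pi_i^I))$. The essential step, and the one that genuinely requires real central character, is that the $W$-characters of tempered Hecke modules with real central character are \emph{linearly independent} in the Grothendieck group of $W$ (this is step~(4), proved via the geometric classification and Borho--MacPherson). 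Hence unitarity of $\pi^I$ forces $\sum b_i\theta_W(\pi_i^I)=0$, which forces every $b_i=0$, which in turn makes $\Sigma(\pi)$ positive. Your proposal bypasses this linear-independence statement entirely, and without it there is no mechanism to conclude positivity on the $K$-types outside $\pi^I$.
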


One of the basic ingredients for the proof is the signature character
of a representation introduced in \cite{V1}. If $\pi$ a
hermitian representation in $\mathsf{Iw}(\C G(\bF)^s)$, let
$\Sigma(\pi)$ denote its signature character, and $\theta_K(\pi)$ its $K$-character (see
\cite{BM1}). Similarly, if $\overline \pi$ is a module in
$\CH(G,I,triv)$-mod, let $\Sigma(\overline\pi)$ denote its signature
character, and $\theta_W(\overline\pi)$ its $W$-character. (Recall that $\CH(G,I,triv)$ has the subalgebra of functions
supported on $K$, which is isomorphic with the Weyl group $W$ of $G.$)
The proof that $\pi^I$ unitary implies $\pi$ unitary (the other
implications are straightforward) in \cite{BM1} has the following steps:

\begin{enumerate}
\item There exist tempered representations $\pi_1,\dots,\pi_\ell$ in
  $\mathsf{Iw}(\C G(\bF)^s)$ such that 
\begin{equation}\label{eq:theta}\Sigma(\pi)=(\sum_i
  a_i\theta_K(\pi_i),\sum_i
  b_i\theta_K(\pi_i)),
\end{equation}
 for some integers $a_i,b_i.$ (This is proved
  by induction on the length of the Langlands parameter, and uses the
  fact that $\mathsf{Iw}(\C G(\bF)^s)$ is closed under taking
  subquotients.)
\item A module $\pi'$ in $\mathsf{Iw}(\C G(\bF)^s)$ is tempered if and
  only if $(\pi')^I$ is tempered in  $\CH(G,I,triv)$-mod. (One defines
  temperedness via Casselman's criterion, so the notion makes sense in
  both settings.)
\item \begin{equation}\label{eq:theta2}\Sigma_W(\pi^I)=(\sum_i
  a_i\theta_W(\pi_i^I),\sum_i
  b_i\theta_W(\pi_i^I) ),
\end{equation} for the same $a_i,b_i$ as in
  (\ref{eq:theta}). (One needs here not only that the irreducibles
  correspond in the equivalence of categories, but also the fact that
  the standard modules correspond.)
\item The set $\{\theta_W(\overline\pi):\overline\pi \text{ is
    tempered with real central character}\}$ is linearly independent
  in the Grothendieck group of $W,$ in fact it gives a basis. (This proof is based on the
  geometric realization of the $W$-structure of tempered modules and
  the results of Borho-MacPherson.)
\item Then one sees that if $\pi^I$ is unitary, $\Sigma(\pi^I)$
  being positive definite, it implies by (4) that $b_i=0,$ for all
  $i,$ and so $\Sigma(\pi)$ is positive definite as well.
\end{enumerate}

The essential step in this correspondence is (4). As stated, it does
not hold true if one drops the ``real central character''
assumption. Even if one considers the weaker (but sufficient) case of
tempered modules whose central character have a fixed elliptic part $s_e$, this statement
is still false in general. The subject of \cite{BM2} is to show that, however, one still has that, for the
tempered modules (with a fixed elliptic part $s_e$) which appear in
the equation (\ref{eq:theta2}), $\sum_i
  b_i\theta_W(\pi_i^I)=0$ implies $b_i=0.$ For this one uses (a
  variant of) the
  reduction to the graded Hecke algebra (\cite{L1}).

\smallskip

 In this setting, the spherical (equivalently, the generic
Iwahori-spherical) unitary dual for 
all split $p$-adic groups is determined: for classical types in
\cite{BM3} and \cite{Ba}, for types $G_2,F_4$ in \cite{Ci}, and for
types $E$ in \cite{BC1}. By different methods, the complete unitary dual for
types $GL(n)$ and $G_2$ had been previously obtained in \cite{Ta} and
\cite{Mu}, respectively. 

For non-generic modules (of types other than $A$ or $G_2$) less is
known: the unitary dual with real central character for the graded
Hecke algebra with equal parameters is known for $F_4$ (\cite{Ci}) and
$E_6$ (\cite{Ci2}). It is not known for classical groups, except in
some small rank cases.  

\smallskip

When one attempts to extend theorem \ref{t:bm} 
to the categories $\mathsf{Uni}(\C G(\bF),J,\rho)$, the first thing to
prove is item (4) above: the linear independence of $W$-characters
 for tempered modules with real central character in the affine Hecke
 algebra with unequal parameters appearing on the geometric side. This
 is discussed in section \ref{sec:3} of this paper. We work, as we
 may since the central character is assumed real, in the equivalent
 setting of the graded Hecke algebra. 

\subsection{}In \cite{BC1}, one proposed a direct method of matching signatures of hermitian
forms for irreducible modules of a graded Hecke algebra with equal parameters $\bH$ with
signatures of hermitian forms on spherical modules of smaller graded Hecke
algebras $\bH(\CO)$ constructed from root systems of centralizers of
nilpotent complex orbits $\CO.$ This method proved particularly effective in
the determination of the unitary generic modules, but it can be 
applied for the signatures of non-generic modules as well.  

In this paper, we apply this idea to the setting of geometric Hecke algebras $\bH$
with {unequal parameters} (\cite{L2,L3,L4}).  As an
application, we determine the unitary dual of certain Hecke algebras of
types $G_2$ and $F_4$, with unequal parameters, which appear in
\cite{L6}, in the
classification of unipotent representations of $p$-adic groups. 

We present next the philosophy of matching signatures and unitary duals. The geometric classification of $\bH$ is controlled by a
complex group $G$, with Lie algebra $\fg$,  and Cartan subalgebra
$\fk t,$ a Levi subgroup $L$ with Lie algebra $\fk l$,  and a cuspidal
local system $\C L$ supported on some nilpotent orbit $\C C$ of $\fk l$ (see
section 3). Note that the rank of $G$
 may be larger than that of 
$\bH$, e.g., the most interesting example in this paper is an algebra of type $F_4$ parameterized by
$G$ simply connected of type $E_7.$ Let $V$ be a simple
$\bH$-module. The graded Hecke algebra version of the geometric
classification (\ref{eq:dll}) says that $V$ is parameterized by a triple $(s,\CO_e,\psi),$
where $s$ is a (conjugacy class of a) semisimple element in $\fk t$,
$\CO_e$ is a $Z_G(s)$-orbit on $\fg_1=\{x\in\fg: [s,x]=x\},$
and $\psi$ is a certain representation of the component group $A(s,e)$.  The orbit $\C C$ is
necessarily distinguished in $\fk l$ (in the sense of Bala-Carter),
and there are restrictions about
which $\psi$ can appear depending on the cuspidal local system $\C
L.$ 
The nilpotent representative $e$ can be completed to a
Jacobson-Morozov triple $\{e,h,f\}$ of $\CO=G\cdot \CO_e,$ such that
$h\in \fk t.$  Let $Z(\CO)$ denote the centralizer in $G$ of
$\{e,h,f\}$. It is a reductive group, possibly disconnected. Let
$A(\CO)$ denote its group of components, and $\fz(\CO)$ its Lie
algebra. Then $s$
can be written as \begin{equation}\label{eq:shnu}s=\frac 12 h+\nu,\end{equation} where $\nu$ is a semisimple
element of $\fz(\CO).$ 

Now fix a nilpotent orbit $\CO$ in $\fg$, together with  a
representation $\phi$ of the component group $A(\CO)$. The requirement
is that 
the pair $(\CO,\phi)$ appears in the generalized
Springer correspondence (\cite{L5}), and so it has a representation $\mu_0$ of the
Weyl group $W$ defining $\bH$ attached to it. This is not the Weyl
group of $G,$ but rather $W=N_G(L)/L.$ (The fact that this is a
Coxeter group is due to the very particular form $L$ must have in
order to allow a cuspidal local system.) Let us denote by $\CU(\CO,\phi)$
the set of unitary $\bH$-modules with {\it lowest $W$-type} $\mu_0$, in the
sense of sections \ref{sec:3.4}-\ref{sec:3.6}. The triple $(s,\CO_e,\psi)$ 
parameterizing a module $V$ with lowest $W$-type $\mu_0$ must satisfy
$\CO_e\subset\CO,$ and $\Hom_{A(s,e)}[\phi:\psi]\neq 0.$ 

We attach a Hecke
algebra $\bH(\fz(\CO),c(\phi))$ with the root system that of $\fz(\CO)$,
and certain parameters $c(\phi)$ depending on $\phi$, and consider $\C S\C
U(\bH(\fz(\CO),c(\phi)))$, the set of unitary spherical
$\bH(\fz(\CO),c(\phi))$-modules. By the equation (\ref{eq:shnu}),
$s\mapsto\nu$, to
a module $V=V_{s,\CO_e,\psi}$ with lowest $W$-type $\mu_0$, we may attach a
spherical module $\rho(V)$ of $\bH(\fz(\CO),c(\phi))$ parameterized by $\nu$ (\ie,
the spherical subquotient of the minimal principal series defined by $\nu$).  

The hope would be that, in this way, the set
$\CU(\CO,\phi)$ can be put in a correspondence with  $\C S\C
U(\bH(\fz(\CO),c(\phi))).$  This is motivated by the case of generic
unitary modules of Hecke algebras with equal parameters
(\cite{Ba,BC1,Ci}), where a particular case of this correspondence worked
almost perfectly. More precisely, that was the case of $\phi=triv$, so that $\bH$ is a
Hecke algebra with equal parameters of the same type as $G$ ($L$ is
a maximal torus, and $\C C=0$). The attached Hecke algebra is
$\bH(\fz(\CO),1)$, meaning that it has equal parameters too. Let us denote by $\C G\C U
(\CO,triv)$ and $\C G\C S\C U(\bH(\fz(\CO),1)$ the corresponding
subsets consisting of (unitary) generic representations, in the sense
of containing the sign $W$-type.
It was shown in the above references that, except for
a handful of cases (none in the classical types, in $G_2$ or $E_6$, one in $F_4$ and in
$E_7$, and six in $E_8$), the sets $\C G\C U
(\CO,triv)$ and $\C G\C S\C U(\bH(\fz(\CO),1)$ are in one-to-one
correspondence. 
In more generality, such a relation is not as perfect,
but it still gives serious information about $\CU(\CO,\phi)$.

There are also two immediate technical problems. One obvious problem is that, in general, an
irreducible module may have more than one lowest $W$-type, so it is
actually better to match $\C U(\CO,\phi)$ with a ``quasi-spherical'' unitary dual of an
extension (by outer automorphisms) of the Hecke algebra defined from
$\fz(\CO)$. A second issue is that, unlike the case of
representations of real reductive groups, a lowest $W$-type may appear
with multiplicity greater than one. But if this is the case, it is a
fact (only empirical to our knowledge) that there exists
another lowest $W$-type with multiplicity one. This fact is needed in order
to be able to define a normalization of the intertwining operators
involved in the calculation of signatures of hermitian forms. However,
this second difficulty only arises for geometric algebras with equal parameters.
We ignore these problems for now and refer to \cite{BC1} and
\cite{Ci2} for details and examples of these phenomena. 

In order to exhibit a relation between $\CU(\CO,\phi)$ and $\C S\C
U(\bH(\fz(\CO),\phi))$, for every $(\CO,\phi)$, we
find a set of $W$-types $\{\mu_0,\mu_1,\dots,\mu_l\}$, and a
corresponding set of 
$W(\fz(\CO))$-representations $\{\rho(\mu_0),\rho(\mu_1),\dots,\rho(\mu_l)\}$,
\begin{equation}\label{corr}
\mu_j\longleftrightarrow\rho(\mu_j),
\end{equation}
  such that:

\begin{enumerate}
\item if $V$ is hermitian but $\rho(V)$ is not, then the hermitian
  form on $V$ is indefinite, \ie, $V$ is not unitary;
\item if $V$ is hermitian and $\rho(V)$ is hermitian, then the
  signature of the hermitian form of $V$ restricted to the $W$-type
  $\mu_j$ is identical with the signature of the hermitian form of
  $\rho(V)$ restricted to the $W(\fz(\CO))$-representation $\rho(\mu_j)$, for
  all $0\le j\le l$.
\end{enumerate}

We explain the relation between $\mu_j$ and $\rho(\mu_j)$ (see \cite{BC1}). Let $\fk
m\supset \fk l$ denote a Levi subalgebra of $\fg$, which is
Bala-Carter for
$\{e,h,f\},$ and let $M\subset G$ be the corresponding Levi subgroup. So $e$ is distinguished in $\fk
m$. There exists a geometric Hecke subalgebra $\bH_M$ of $\bH$ which
is parameterized by $(L,\C L)$ in $M$. By \cite{L4},  the irreducible $\bH_M$-modules
parameterized by $(h,e,\psi')$, for any allowable $\psi'\in
\widehat{A_{M}(s,e)}$, are discrete series. We find a $\psi'$ such that via
the natural map $A_{M}(h,e)=A_{M}(s,e)\rightarrow A(s,e)$,
$\Hom_{A_{M}}(h,e)[\psi,\psi']\neq 0$, and realize $V_{s,e,\psi}$
as a summand of the Langlands subquotient of the standard induced module
$X(M,\sigma,\nu)=\bH\otimes_{\bH_M}(\sigma\otimes\bC_\nu).$ 
Then $\Hom_W[\mu_j:X(M,\sigma,\nu)]$ carries a natural action of
$W(\fz(\CO))$ (independent of $\nu$) and we call the resulting
representation $\rho(\mu_j).$

Of course, this procedure is efficient only if we find a sufficiently
large number of matching types in this way. This is our main criterion
for proving non-unitarity of $\bH$-modules. Ideally, if sufficiently many
$W$-types are matched, one
would be able to conclude that a hermitian module $V$ is unitary only
if $\rho(V)$ is unitary. This doesn't always happen, and in fact we
find examples here of $\CU(\CO,\phi)$'s both smaller or larger than
expected. We exemplify this at the end of the introduction.

\smallskip

There is also the case of spherical unitary $\bH$-modules, whose
lowest $W$-type is the trivial. In that case, $\CO=G\cdot\C C$ (it is
the minimal orbit appearing in the parameterization),  $\fk
z(\CO)$ has the same type as $\bH$, and the above
correspondence is a tautology. 
We determine the spherical unitary $\bH$-modules by other methods,
known to the experts. In particular, we use the Iwahori-Matsumoto
involution $IM$ (definition (\ref{eq:4.2.1})), which preserves unitarity. If $V$ is a spherical $\bH$-module, then
$IM(V)$ contains the sign $W$-type. If $IM(V)\neq V,$ then $IM(V)$ is
parameterized by a larger orbit, for which the unitary set has already
been determined. But in order to apply this method, we need to
determine explicitly which unitary irreducible modules have the sign
$W$-type for any given pair $(\CO',\phi').$ This is achieved by 
computing composition series as part of section \ref{sec:5.1b}, and the result is in table  \ref{table:5.2}.

So this approach leaves the case $V=IM(V)$ and $V$ spherical, equivalently, $V$ must be
an irreducible spherical principal series. The unitarity of such
representations is discussed in section \ref{sec:5.5}, and the methods are again
well-known: continuous deformations of parameters
and unitary induction, but also certain explicit calculation of
signatures. 

\subsection{}We give an outline of the paper. In section \ref{sec:1.1}-\ref{sec:1.7}, we present the necessary
definitions and background on the representation theory of affine and
graded Hecke algebras, assuming the
parameters are arbitrary. We will mostly work with the affine graded
Hecke algebra. In particular, we
recall the Langlands classification (actually, the reduction to
tempered modules) as 
in \cite{E}, the unitarity of tempered modules, as it follows from
\cite{O1}, and the results about the Hermitian forms and intertwining
operators from \cite{BM3}. In section \ref{sec:1.8}, we present a type of reduction to
{\it real} central (``infinitesimal'') character for the unitary dual
of the Hecke algebra, which 
is the complete analogue of the result for real reductive groups, as
in \cite{K}, XVI.4. This is well-known to the specialists.

From section \ref{sec:3} on, we restrict to the case of {geometric} Hecke
algebras. We recall the relevant
results about the classification of simple $\bH$-modules, and the
generalized Springer correspondence (\cite{L5}). An
important consequence is that the set of irreducible tempered
$\bH$-modules with real central character are linearly independent in
the Grothendieck group of $W$. Therefore the method of
\cite{BM1} holds, and the correspondence with unitary
(unipotent) representations of $p$-adic groups can be established. So for geometric Hecke algebras, the unitarity of tempered $\bH$-modules
could also be obtained from the correspondence with the $p$-adic group.

In section \ref{sec:4}, we analyze Hermitian forms and intertwining operators
for simple modules. We restrict, as we may, to modules with real
central character. 
As an easy application, we
present the spherical unitary dual for type $G_2$ with arbitrary
unequal parameters. (In fact, if we factor in the {Iwahori-Matsumoto involution},
and the tempered modules, one obtains all the unitary modules with real
central character of $G_2$.)  

Section \ref{sec:5} presents a more interesting application. We determine the
unitary dual of the Hecke algebra (of type $F_4$) constructed in
\cite{L2} from a cuspidal local system on the principal nilpotent
orbit in the Levi $(3A_1)''$ of the simply connected $E_7.$ Besides the equal parameter
case, this is, essentially, the only
Hecke algebra of type $F_4$ which appears in the classification of
unipotent representations of $p$-adic groups from \cite{L6} (the
equal parameter $\bH(F_4)$ was treated in \cite{Ci}). We also remark that the
Hecke algebras of types $B/C$ with unequal parameters which appear in
the classification of the unipotent exceptional $p$-adic groups are
parabolic subalgebras of this one.

The main results are theorem \ref{t:5.1}, proposition and corollary \ref{p:5.3}.  Although the
methods employed are mostly uniform, details need to be checked case
by case for each nilpotent orbit in $E_7$ which appears in the
classification (see figure \ref{3A1nil}).   As mentioned above, we use
the fact that $IM$
 preserves unitarity of modules. In order to
use $IM,$ we need to compute the decompositions of standard
modules and the $W$-structure of unitary modules. 

As a consequence of the calculations with intertwining operators in
section 5, we obtain the $W$-structure of standard
modules for the cuspidal local system $(3A_1)''$ in $E_7$ (section \ref{green}). This is also (at
least in principle) computable by the generalized Green polynomials
algorithms in \cite{L7}. An implementation of such algorithms for small rank
classical groups was realized in \cite{LS}.

\subsection{Examples} We conclude the introduction with two examples relative to the matching
of signatures and unitary duals for this unequal parameter $\bH$ of
type $F_4.$ The nilpotent orbits are of type $E_7.$  

\smallskip

\noindent (a) $\CO=A_2+3A_1$, $A(\CO)=\bZ/2\bZ$, $\fz(\CO)=G_2$,
$\phi=sgn.$ The intertwining operator
calculations indicate a matching with the spherical unitary dual of a
Hecke algebra $\bH(G_2,(2,1))$: type $G_2$ with parameter $2$ on the
long root and $1$ on the short root. ${W(G_2)}$ has six
representations, in the notation of \cite{Ca}: $(1,0),$ $(1,3)'$,
$(1,3)''$, $(1,6)$, $(2,1),$ $(2,2).$ There are two minimal subsets of
$\widehat{W(G_2)}$ which determine $\C S\C U(\bH(G_2,(2,1))$, in the
sense that a hermitian module is unitary if and only if the hermitian
form is positive definite on this set of types:
$\{(1,0),(2,1),(2,2)\}$ or $\{(1,0),(1,3)',(1,3)'',(2,1)\}.$ It turns
out there are $W(F_4)$-types $\mu_j$ such that one realizes as
$\rho(\mu_j)$ (see the explanation around equation (\ref{corr})) the $W(G_2)$-representations $\{(1,0), (1,3)',(2,1),(1,6)\}.$ So we cannot conclude
that $\CU(A_2+3A_1,sgn)$ is a subset of $\C S\C U(\bH(G_2,(2,1))$, and
in fact, we find that $\CU(A_2+3A_1,sgn)\supsetneq\C S\C
U(\bH(G_2,(2,1))$ (see table \ref{table:unitary} and figure \ref{fig:A23A1}). 

\smallskip

\noindent (b) $\CO=D_4(a_1)+A_1$, $A(\CO)=(\bZ/2\bZ)^2$,
$\fz(\CO)=2A_1$. There are two representations $\phi_1$ and $\phi_2$
of $A(\CO)$ which enter 
the generalized Springer correspondence with $\widehat {W(F_4)}$
(\cite{Sp}): $\phi_1$ corresponds to a nine-dimensional $W(F_4)$-representation,
and $\phi_2$ to a two-dimensional. We consider here $\phi_1.$ The calculations indicate that the
matchings should be with the spherical unitary dual of
$\bH(2A_1,(5,5))$: type $A_1+A_1$, and the parameter is $5$ on both
simple roots. The set $\C S\C U(\bH(2A_1,(5,5))$ is detected on
$\{triv\otimes triv,triv\otimes sgn, sgn\otimes triv\}\subset
\widehat{W(2A_1)}.$ All three representations are matched by
certain $\mu_j$'s, so we can conclude that
$\CU(D_4(a_1)+A_1,\phi_1)\subset \C S\C U(\bH(2A_1,(5,5))$. The problem
here is that there is extra reducibility in the standard modules for $\bH$
parameterized by $(\CO,\phi)$ which is not detected by the hermitian
form on the $W(F_4)$-types $\mu_j.$ What we find is that
$\CU(D_4(a_1)+A_1,\phi_1)\subsetneq \C S\C U(\bH(2A_1,(5,5))$ (see
table \ref{table:unitary}).

\medskip

\begin{small}
\noindent{\bf Acknowledgments.} 
The basis for this paper stems from joint work with D. Barbasch. Most
of the ideas here can be traced there, and sometimes even before, to
the work of Barbasch and Moy. I thank D. Barbasch for sharing his
ideas with me.

I thank G. Lusztig for discussions about the generalized
Springer correspondence, and P. Trapa for encouraging me to write up
these calculations, and for his many helpful
suggestions.  This research was supported  by the NSF
grant FRG-0554278.
\end{small}

\section{Preliminaries}\label{sec:1}

\subsection{}\label{sec:1.1} Let $(X,\check X,R,\check R,\Pi)$ be a fixed based root
datum. Denote by $R^+$ the positive roots determined by $\Pi,$ and by
$\check R^+$ the corresponding positive coroots. Define $\fh=\check
X\otimes_\bZ \bC$ and $\fh^*=X\otimes_\bZ\bC.$  Let $W$ be the finite
Weyl group generated by the set $S$ of simple reflections in the roots
of $\Pi$. Denote by $Q=\bZ R\subset X$ the root lattice, and set
$\Omega=X/Q.$

Let $\wti W=W\ltimes X$ be the extended Weyl group, and
$W_{\text{aff}}=W\ltimes Q$ be the affine Weyl group with the set of simple
affine reflections $S_{\text{aff}}.$ Recall that $W_{\text{aff}}$ is normal in $\wti
W,$ and $\wti W/W_{\text{aff}}\cong \Omega.$ Let $\ell:\wti W\to \bZ_{\ge 0}$
denote the length function (\cite{L1}): it is the extension of the (Coxeter)
length function of $W_{\text{aff}}$ and it is identically $0$ on $\Omega.$

\subsection{}\label{sec:1.2}  Let $c:R\to\bZ_{>0}$ be a
function such that $c_\al=c_\beta$, whenever $\al$ and $\beta$ are
$W$-conjugate. Let $r$ denote an indeterminate. 
As a vector space, \begin{equation}\label{1.1.1}\Bbb H=\Bbb
  C[W]\otimes\bC[r]\otimes\Bbb A,\end{equation} where $\Bbb A$ is the symmetric
algebra over $\fh^*$. The generators are $t_w\in \Bbb C[W]$, $w\in W$
and $\om\in\fh^*$. The relations between the generators are:
\begin{align}\label{1.1.2}
&t_wt_w'=t_{ww'},&\text{ for all }w,w'\in W;\notag\\
&t_s^2=1, &\text{ for any simple reflection } s\in W;\notag\\
&\om t_s=t_ss(\om)+rc_\al\om(\check\alpha),&\text { for simple
    reflections } s=s_\alpha\in S.\notag\\
\end{align}
Assume the root system $R$ is irreducible. If $R$ is simply-laced, the
function $c$ must be constant. In the two root lengths case, we will denote the function
$c$ by the pair $(c_l,c_s)\in \bZ_{>0}^2$, where $c_l$, $c_s$ specify
the values of $c$ on the long and short roots, respectively (in this
order). Let $k\in \bZ_{>0}$ be the ratio
$k=(\al_l,\al_l)/(\al_s,\al_s)$ for some long root $\al_l$ and short root $\al_s.$ Then it is straightforward to see
that $\bH(R,(c_1,c_2))\cong \bH(R,(kc_2,c_1)).$ For example,
$\bH(F_4,(1,2))\cong \bH(F_4,(4,1)).$ 

\subsection{} By \cite{L1}, the center of $\bH$ is
  $\bC[r]\otimes \bA^W$.
On any simple (finite dimensional) $\bH$-module, the center of $\bH$
acts by a
character, which we will call a {\it central character}. The central
characters correspond to $W$-conjugacy classes of semisimple elements $(r_0,s)\in\bC\oplus\fh.$ 

We decompose $\fh$ into  real and imaginary parts; the complex
conjugation $\bar\ $ of $\bC$ induces a conjugation on $\fh$. We
set $\fh_\bR=\{a\in \fh: \overline a=a\}$ and
$\fh_{i\bR}=\{a\in\fh:\overline a=-a\}.$ For every $s\in \fh,$ we have
then a unique decomposition $s=Re(s)+\sqrt {-1} Im(s),$ where
$Re(s)\in \fh_\bR$ and $Im(s)\in \fh_{i\bR}.$

\begin{definition}\label{d:1.2} A central character $(r_0,s)$ is called
  {\it real} if $(r_0,s)\in \Bbb R\oplus\fh_\Bbb R$.
\end{definition}

\subsection{} $\Bbb H$ has a $*$- operation  given on generators as
follows (as in 
\cite{BM2}, section 5):
\begin{align}\label{1.3.1}
&t_w^*=t_{w^{-1}},\ w\in W;\ r^*=r;\\\notag
&\om^*=-\overline \om+r\sum_{\alpha\in
  R^+}c_\al\overline\om(\check\alpha) t_{s_\alpha},\
\om\in\fh^*.\notag 
\end{align} 

The $\bH$-module $V$ is called {\it Hermitian} if it admits a
Hermitian form $\langle~,~\rangle$ such that 
\begin{equation}\label{1.3.2}
\langle x\cdot v_1,v_2\rangle=\langle v_1,x^*\cdot v_2\rangle,\text{ for
  all } v_1,v_2\in V,~x\in \bH.
\end{equation}
It is called {\it unitary}, if in addition the Hermitian form is
positive definite.

\subsection{}\label{sec:1.4} We present the {Langlands classification} for $\bH$ as
in \cite{E}.

If $V$ is a (finite dimensional) simple $\bH$-module, $\bA$ induces a
generalized weight space decomposition 
\begin{equation}
V=\bigoplus_{\lambda\in \fk h} V_\lambda.
\end{equation}
 Call $\lambda$ a {\it weight} if $V_\lambda\neq 0.$

\begin{definition} The irreducible module $\sigma$ is called {\it
     tempered} if $\ome_i(\lambda)\le 0,$ for all
  weights $\lambda\in \fk h$ of $\sigma$ and all fundamental weights
  $\ome_i\in\fk h^*.$  
If $\sigma$ is tempered and $\ome_i(\lambda)<0,$ for all
  $\lambda,\omega_i$ as above, $\sigma$ is called a {\it discrete series}.
\end{definition}

For every
$\Pi_M\subset \Pi$, define $R_M\subset R$ to be the set of roots generated by
$\Pi_M$,  $\check{R}_M\subset \check R$ the corresponding set of
coroots, and $W(M)\subset W$ the corresponding Weyl subgroup. Let
$\bH_M$ be the Hecke algebra attached to $(\fh, R_M)$. It can be regarded
naturally as a subalgebra of $\bH.$ 

Define $\fk t=\{\nu\in\fk h:
\langle\al,\nu\rangle=0,\text{ for all } \al\in\Pi_M\}$ and $\fk
t^*=\{\lambda\in\fk h^*: \langle\lambda,\check\al\rangle=0,\text{ for
  all }\al\in \Pi_M\}.$ 
Then  $\bH_M$ decomposes as
$$\bH_M=\bH_{M_0}\otimes S(\fk t^*),$$ where $\bH_{M_0}$ is the Hecke
algebra attached to $(\bC\langle\Pi_M\rangle,R_M).$ 

We will denote by $I(M,U)$ the induced module
$I(M,U)=\bH\otimes_{\bH_M} U.$ 

\begin{theorem}[\cite{E}]\label{t:1.4} 
\ 

\noindent\begin{enumerate}
\item 
Every irreducible $\bH$-module
  is a quotient  of a standard induced module
  $X(M,\sigma,\nu)=I(M,\sigma\otimes \bC_\nu),$ where $\sigma$ is
  a tempered module for $\bH_{M_0},$ and $\nu\in \fk t^+=\{\nu\in\fk t:
    Re(\al(\nu))>0,\text{ for all }\al\in\Pi\setminus\Pi_M\}.$ 

\item Assume the notation from (1). Then $X(M,\sigma,\nu)$ has a unique
  irreducible quotient, denoted by $L(M,\sigma,\nu)$.

\item If $L(M,\sigma,\nu)\cong L(M',\sigma',\nu'),$ then $M=M',$ $\sigma\cong \sigma'$ as
$\bH_{M_0}$-modules, and $\nu=\nu'.$
\end{enumerate}
\end{theorem}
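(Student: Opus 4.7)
The plan is to follow the standard Langlands-type classification, adapted to the graded Hecke algebra as in Evens \cite{E}, in close parallel with the case of real reductive groups. For the existence statement (1), I would start with a simple $\bH$-module $V$ and use its weight decomposition $V=\bigoplus_\la V_\la$ under the commutative subalgebra $\bA=S(\fh^*)$. Since the set of weights is finite, so is $\{Re(\la):V_\la\ne 0\}\subset \fh_\bR$, and I can pick a weight $\la_0$ whose real part is extremal in the dominance order (minimal in the partial order where $\mu\le \mu'$ means $\mu'-\mu$ is a nonnegative real combination of the simple coroots $\check R^+$). Set $\Pi_M:=\{\al\in\Pi:\al(Re(\la_0))=0\}$ and decompose $\la_0=\la_0^M+\nu$ with $\la_0^M$ in the $\bC$-span of $\check R_M$ and $\nu\in\fk t$; this choice of $\Pi_M$ forces $Re(\al(\nu))>0$ for $\al\in\Pi\setminus\Pi_M$, so $\nu\in\fk t^+$. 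The $\bH_M$-submodule $U\subset V|_{\bH_M}$ generated by $V_{\la_0}$ has all weights in $\la_0+\bZ R_M$, so the subalgebra $S(\fk t^*)\subset \bH_M$ acts on $U$ by the scalar $\nu$. Choosing an irreducible $\bH_M$-subquotient $\sig\otimes \bC_\nu$ of $U$, the extremality of $\la_0$ translates into temperedness of $\sig$ for $\bH_{M_0}$. Frobenius reciprocity for induction then yields a nonzero $\bH$-map $X(M,\sig,\nu)\to V$, surjective because $V$ is irreducible.

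For the unique irreducible quotient (2), I would argue that the extremal weight $\la_0$ is realized in the canonical copy $1\otimes(\sig\otimes \bC_\nu)\subset X(M,\sig,\nu)$, and a routine weight analysis shows that any proper $\bH$-submodule of $X(M,\sig,\nu)$ must be disjoint from this extremal weight space (else it would contain the generator and exhaust $X$). The sum of all proper submodules is therefore proper, giving a unique maximal proper submodule and hence the unique irreducible quotient $L(M,\sig,\nu)$. For the uniqueness of parameters (3), one recovers $(M,\sig,\nu)$ intrinsically from $L=L(M,\sig,\nu)$: the weight $\la_0\in \fh$ with $Re(\la_0)$ extremal and normalized by $\nu\in\fk t^+$ is an intrinsic invariant of $L$; $\Pi_M$ is its stabilizing set $\{\al\in\Pi:\al(Re(\la_0))=0\}$; $\nu$ is the $\fk t$-component of $\la_0$; and $\sig$ is read off from the $\bH_{M_0}$-action on the extremal weight space $L_{\la_0}$ (or the $\bH_M$-span thereof).

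The main obstacle will lie in step (1): I must check simultaneously that the choice of $\Pi_M$ yields $\nu$ strictly in the open chamber $\fk t^+$ (and not on a wall), and that the irreducible $\bH_{M_0}$-subquotient $\sig$ is genuinely tempered, \ie, $\om(\mu)\le 0$ for all weights $\mu$ of $\sig$ and all fundamental weights $\om$ of $\bH_{M_0}$. Both conditions are controlled by the extremality of $\la_0$ together with the algebra relations (\ref{1.1.2}) of $\bH_M$: enlarging $\Pi_M$ would destroy the temperedness of $\sig$ by introducing weight directions violating dominance, while shrinking $\Pi_M$ would push $\nu$ onto a wall of $\fk t^+$. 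A similar chamber argument is required in (3) to ensure that the $W$-conjugacy ambiguity in $\la_0$ is fixed uniquely by the condition $\nu\in \fk t^+$.
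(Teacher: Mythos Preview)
The paper does not give its own proof of this theorem; it is stated with attribution to Evens \cite{E} and used as background. So there is nothing in the paper to compare your proposal against. Your outline is essentially the strategy Evens carries out, which is in turn the graded Hecke algebra transcription of the Langlands classification for real groups as in Borel--Wallach \cite{BW}.

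One point in your sketch of (1) deserves more than the phrase ``controlled by the extremality of $\la_0$ together with the algebra relations''. Choosing a weight $\la_0$ with $Re(\la_0)$ minimal in the root-cone order and then setting $\Pi_M=\{\al\in\Pi:\al(Re\,\la_0)=0\}$ does not by itself force $\al(Re\,\la_0)>0$ for the remaining $\al\in\Pi\setminus\Pi_M$; minimality only prevents strictly smaller weights, and the commutation relation $\om t_{s_\al}=t_{s_\al}s_\al(\om)+rc_\al\om(\check\al)$ does not guarantee that $V_{s_\al\la_0}\neq 0$ when $\al(Re\,\la_0)<0$. The standard device here is Langlands' combinatorial lemma on the geometry of chambers: every $\mu\in\fh_\bR$ admits a unique decomposition $\mu=\nu-\sum_{\al\in\Pi_M} c_\al\check\al$ with $c_\al\ge 0$, $\nu\in\fk t$, and $\al(\nu)>0$ for $\al\in\Pi\setminus\Pi_M$. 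Applying this to $Re(\la_0)$ is what pins down $(M,\nu)$ canonically, forces $\nu\in\fk t^+$, and simultaneously yields the temperedness inequalities for $\sig$; it is also what makes the uniqueness argument in (3) go through without ambiguity from the $W$-action on weights. You have correctly located the obstacle, but the resolution is this lemma rather than a direct appeal to extremality.
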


We will call a triple $(M,\sigma,\nu)$ as in theorem \ref{t:1.4} a {\it
  Langlands parameter}.

\subsection{} We need to recall some results about the affine (Iwahori-)Hecke
algebra as in \cite{O1}, and their implications for $\bH.$ Fix $q>1.$
The {\it affine Hecke algebra}
$\CH$ is the associative $\bC$-algebra with basis $\{T_w: w\in \wti W\}$
 and set of
parameters $d:\Pi_{\text{aff}}\to\Bbb Z_{>0}$ which satisfies the relations:

\begin{enumerate}
\item $T_wT_{w'}=T_{ww'}$ if $w,w'\in\wti W,$ \text{ with } $\ell(ww')=\ell(w)+\ell(w');$
\item $(T_{s_\al}+1)(T_{s_\al}-q^{d_\al})=0,$ for all simple
  affine reflections $s_\al\in S_{\text{aff}}.$
\end{enumerate}
This is the Iwahori-Matsumoto presentation of $\CH.$

The $*$-operation for $\CH$ is given by $T_w^*=T_{w^{-1}}.$ The
algebra $\CH$ has a {\it trace}, i.e., a linear functional $\tau:\CH\to\bC$, given by 
\begin{align}
\tau(T_w)=\left\{\begin{matrix} 1, &w=1\\0, &w\neq
1\end{matrix}\right.,\quad w\in\wti W.
\end{align}
One checks that $\tau$ defines an inner product on $\CH$ via
\begin{equation}(x,y):=\tau(x^*y),\ x,y\in\CH,\end{equation} and the
basis $ \{T_w: w\in \wti W\}$
is orthogonal with respect to $(\ ,\ ).$ 

\begin{definition}
Let $\fk H$ be the Hilbert space completion of $\CH$ with respect to
$(\ ,\ ).$
\end{definition}

\subsection{}\label{sec:1.7} The algebra $\CH$ admits a second presentation due to
Bernstein and Lusztig. The generators are $\{T_w:w\in W\}$ and
$\{\theta_x:x\in X\}$ with relations: 
\begin{enumerate}
\item $T_wT_{w'}=T_{ww'}$ if $w,w'\in W,$ \text{ with } $\ell(ww')=\ell(w)+\ell(w');$
\item $(T_{s_\al}+1)(T_{s_\al}-q^{d_\al})=0,$ for all simple
  reflections $s_\al\in S;$
\item $\theta_x\theta_x'=\theta_{x+x'},\ x,x'\in X,\text{ and
}\theta_0=1;$
\item
  $\theta_xT_{s_\al}-T_{s_\al}\theta_{s_\al(x)}=(q^{d_\al}-1)\frac{\theta_x-\theta_{s_\al(x)}}{1-\theta_{-\al}},$
  $x\in X, s_\al\in S.$ 
\end{enumerate}
Denote by $\CH_W$ and $\CA$ the subalgebras generated by $\{T_w\}$ and
$\{\theta_x\}$ respectively. We refer the reader to \cite{L1} for the more general version of this definition of
$\CH$ and for the relation between the two presentations.

Note that the trace $\tau$ is easy to define using the Iwahori-Matsumoto
presentation, but not so if one uses instead the Bernstein-Lusztig
presentation. (See \cite{O2} for details.)
Similarly to section \ref{sec:1.4}, one defines {\it tempered} and
{\it discrete series} modules for $\CH$ using the weights of the
abelian subalgebra $\CA$ (``Casselman's criteria'', see \cite{L1} and \cite{O1}).

\begin{proposition}[\cite{O1},2.22] A finite dimensional
  representation $\sigma$ of $\CH$ is a discrete series if and only
  if it is a subrepresentation of $\fk H.$ In particular, $\sigma$ is
  a unitary module of $\CH.$ 
\end{proposition}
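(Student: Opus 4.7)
The plan is to characterize discrete series of $\CH$ through square-integrability of their matrix coefficients with respect to the trace form $(x,y)=\tau(x^*y)$, and then identify this square-integrability with embeddability into $\fk H$.

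First, fix a finite-dimensional simple $\CH$-module $V$ with a Hermitian form $\langle\cdot,\cdot\rangle_V$, and for $v,w\in V$ define the matrix coefficients $c_{v,w}(h)=\langle\sigma(h)v,w\rangle_V$. Expanding in the Iwahori-Matsumoto basis gives the formal series $\sum_{x\in\wti W}\overline{c_{v,w}(T_x)}\,T_x$, whose squared norm under $(\cdot,\cdot)$ is $\sum_x|c_{v,w}(T_x)|^2$ by orthogonality of the $T_x$ under $\tau$; so this element belongs to $\fk H$ exactly when the sum converges.

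The easy direction is: if $V$ embeds as an $\CH$-subrepresentation of $\fk H$ (with $\CH$ acting by left multiplication), then for each $v\in V$ the vector $\sigma(h)v=h\cdot v$ stays in $\fk H$, so the matrix coefficients lie automatically in $\ell^2$. Moreover, the identity $T_w^*=T_{w^{-1}}$ together with the orthogonality of $\{T_w\}$ shows that $(hx,y)=(x,h^*y)$, so the inner product is $*$-invariant; any $\CH$-subrepresentation of $\fk H$ is therefore unitary, which yields the second assertion of the proposition. To deduce the discrete series property, one applies Casselman's criterion (as recalled in section \ref{sec:1.7}): the $\ell^2$ decay of $c_{v,w}(T_x)$ along the translation directions of $\wti W$ translates into strict negativity $\omega_i(\lambda)<0$ for every $\CA$-weight $\lambda$ of $V$ and every fundamental weight $\omega_i$.

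For the converse, assume $V$ is discrete series. Using the Bernstein-Lusztig presentation, I would write $T_x$, for $x=t_\eta u$ with $\eta$ a dominant coweight far in a translation chamber and $u\in W$ of bounded length, in the form $\theta_\eta T_u$ modulo lower-order corrections, and exploit the fact that $\theta_\eta$ acts on a generalized $\lambda$-weight vector essentially by the exponential $e^{\langle\lambda,\eta\rangle}$. The inequalities $\omega_i(\lambda)<0$ then produce geometric decay of $|c_{v,w}(T_x)|$ as $\eta$ leaves compacta in the dominant chamber, while the number of $x\in\wti W$ with $\ell(x)\le N$ grows only polynomially in $N$ along coroot directions (modulo the finite factor $W$). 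Summing the resulting geometric series yields $c_{v,w}\in\ell^2$, and fixing $w\ne 0$ the map $v\mapsto c_{v,w}$ is a nonzero $\CH$-equivariant map $V\to\fk H$; irreducibility of $\sigma$ promotes it to an embedding.

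The main obstacle is the asymptotic estimate in the last paragraph. One must keep careful track of generalized weight subspaces (not just honest weight spaces) as well as the lower-order corrections arising when the Bernstein-Lusztig relations are used to commute $T_w$ past $\theta_x$, and then balance the exponential decay against the combinatorial volume growth of the length function on $\wti W$. This is the technical heart of Opdam's argument in \cite{O1}, analogous in spirit to the Casselman-Wallach asymptotic theory of matrix coefficients on real reductive groups.
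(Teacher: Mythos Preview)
The paper does not prove this proposition at all: it is quoted verbatim as a result of Opdam (\cite{O1}, 2.22) and used as a black box, after which the paper only remarks that ``a standard argument then implies the unitarity of tempered $\CH$-modules as well'' and passes to the graded algebra via \cite{L1} and \cite{BM2}. So there is no ``paper's own proof'' against which to compare your proposal.

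That said, your sketch is a reasonable summary of the shape of Opdam's argument, with one inaccuracy worth flagging. The Iwahori--Matsumoto basis $\{T_w\}$ is orthogonal for $(\,\cdot\,,\,\cdot\,)=\tau(\cdot^*\cdot)$ but not orthonormal: one has $(T_w,T_w)=\tau(T_{w^{-1}}T_w)$, which is a product of the parameters $q^{d_\alpha}$ over a reduced expression for $w$, hence grows like $q^{\ell(w)}$. Thus the squared norm of your formal series is $\sum_x |c_{v,w}(T_x)|^2\,(T_x,T_x)$, not $\sum_x |c_{v,w}(T_x)|^2$, and the relevant $\ell^2$ condition is weighted accordingly. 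This weighting is exactly what makes the growth/decay bookkeeping in your converse direction work out: the exponential growth of $(T_x,T_x)$ must be beaten by the exponential decay of $|c_{v,w}(T_x)|$ coming from the strict inequalities $\omega_i(\lambda)<0$, and conversely its presence is what forces those strict inequalities in the forward direction. Apart from this, your outline (matrix coefficients, $*$-invariance of the inner product giving unitarity, Casselman's criterion for the forward implication, and Bernstein--Lusztig asymptotics for the converse) matches the structure of the argument in \cite{O1}.
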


A standard argument then implies the unitarity of tempered
$\CH$-modules as well.
Moreover, we can transfer this result to the graded Hecke algebra,
using the correspondence between tempered modules of $\CH$ and
tempered modules of $\bH$
(theorem 9.3 in \cite{L1}) and the preservation of unitarity (theorem
4.3 in \cite{BM2}). 

\subsection{}\label{sec:1.8}  Given a module $V$, let $V^h$ denote the Hermitian dual. 
Every element $x\in \bH$ can be written uniquely as $x=\sum_{w\in
  W/W(M)} t_w x_w,$ with $x_w\in \bH_M.$ Let $\ep_M:\bH\to\bH_M$ be
the map defined by $\ep_M(x)=x_1$, that is, the component of the
identity element $1\in W.$ In the particular case $\Pi_M=\emptyset,$ we will
denote the map by $\ep:\bH\to \bA.$ 

\begin{lemma}[\cite{BM3}, 1.4] If $U$ is module for $\bH_M$, and
  $\langle\ ,\ \rangle_M$ denotes the Hermitian pairing with $U^h,$
  then the Hermitian dual of $I(M,U)$ is $I(M,U^h)$, and the Hermitian
  pairing is given by $$\langle t_x\otimes v_x,t_y\otimes
  v_y\rangle_h=\langle \ep_M(t^*_yt_x)v_x,v_y\rangle_M, \ x,y\in
  W/W(M),\ v_x,v_y\in U.$$ 

\end{lemma}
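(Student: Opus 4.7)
The plan is to define the pairing on simple tensors in $I(M,U)\times I(M,U^h)$ by the stated formula and verify four things: well-definedness on the tensor product, the Hermitian property, $\bH$-compatibility with the $*$-operation, and non-degeneracy. The decomposition $\bH=\bigoplus_{x\in W/W(M)} t_x\bH_M$ as a free right $\bH_M$-module (with $x$ running over minimal coset representatives) will be used throughout; it exhibits $I(M,U)=\bH\otimes_{\bH_M}U$ with basis $\{t_x\otimes v_i\}$ for any basis $\{v_i\}$ of $U$, and makes $\ep_M$ a right $\bH_M$-module projection.

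Well-definedness on the balanced tensor product is immediate from $\ep_M(t_y^*t_xh)=\ep_M(t_y^*t_x)h$ for $h\in\bH_M$, which is just the right $\bH_M$-linearity of $\ep_M$. For the Hermitian property, a short calculation reduces the required identity, via the Hermitian property of $\langle\ ,\ \rangle_M$, to $\ep_M(t_y^*t_x)^*=\ep_M(t_x^*t_y)$; this holds because $t_y^*t_x=t_{y^{-1}x}$ is a single element of $\bC[W]\subset\bH$ whose $\ep_M$-projection is either $t_{y^{-1}x}$ (when $y^{-1}x\in W(M)$) or zero, and both cases yield the identity upon applying $*$. Non-degeneracy follows at once from non-degeneracy of $\langle\ ,\ \rangle_M$ and the basis structure: pairing $\sum_x t_x\otimes v_x$ against $t_y\otimes v$ for varying $v\in U^h$ forces each $v_y=0$.

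The main obstacle is the $\bH$-compatibility $\langle a\xi,\eta\rangle_h=\langle\xi,a^*\eta\rangle_h$ for $a\in\bH$. By sesquilinearity it suffices to treat generators $a=t_s$ for simple reflections and $a=\om\in\fh^*$. The case $a=t_s$ is elementary: writing $sx=zz_M$ and $sy=ww_M$ as coset-reduced decompositions in $W$ (with $z,w$ coset representatives and $z_M,w_M\in W(M)$), both sides reduce to $\langle t_{y^{-1}sx}v_x,v_y\rangle_M$ after absorbing the $\bH_M$-factors via the $\bH_M$-Hermitian property of $\langle\ ,\ \rangle_M$, and one checks $y^{-1}z\in W(M)\Leftrightarrow w^{-1}x\in W(M)$ so the two expressions vanish simultaneously. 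The case $a=\om$ is more delicate: one pushes $\om$ past $t_x$ using the Bernstein-type relation (\ref{1.1.2}), producing a main term $t_x\om'$ with $\om'\in\bH_M$ together with $\bC[W]$-type correction terms of the form $rc_\al\om(\cha)t_{s_\al}t_x$; these corrections are matched precisely by the Weyl-group contribution $r\sum_{\al\in R^+}c_\al\overline{\om}(\cha)t_{s_\al}$ appearing in the formula (\ref{1.3.1}) for $\om^*$, via the $a=t_s$ case already handled. Organizing this cancellation over $R^+$ is the heart of the computation.
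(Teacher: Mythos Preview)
The paper does not supply a proof of this lemma; it is cited from \cite{BM3}, \S1.4, so there is nothing in the paper to compare against. Your outline is the natural direct verification and is correct in substance.

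One caution about the well-definedness step. If you attempt to extend the formula to arbitrary $a,b\in\bH$ via $\langle a\otimes v,b\otimes w\rangle_h=\langle\ep_M(b^*a)v,w\rangle_M$ (which would make the $\bH$-compatibility a tautology, since $b^*(ca)=((c^*b)^*)a$), you need $\ep_M$ to be \emph{left} $\bH_M$-linear as well as right $\bH_M$-linear. It is not: for a simple root $\al\notin\Pi_M$ one has $\ep_M(\om\,t_{s_\al})=rc_\al\langle\om,\cha\rangle$ while $\om\,\ep_M(t_{s_\al})=0$. So you should simply define the pairing on the basis $\{t_x\otimes v_i:x\in W/W(M)\}$ (where well-definedness is vacuous) and then genuinely verify compatibility for the generators, as you in fact do. The separate ``Hermitian property'' you check is not required for the statement as written: to identify $I(M,U^h)$ with $I(M,U)^h$ one only needs $\bH$-compatibility and nondegeneracy, and conjugate symmetry is only meaningful when $U\cong U^h$.

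Your sketch of the $\om$-case is on the right track. The bookkeeping organizes cleanly once one uses the iterated commutation formula
\[
\om\, t_w=t_w\,(w^{-1}\om)+r\sum_{\al>0,\ w^{-1}\al<0}c_\al\langle\om,\cha\rangle\, t_{s_\al w}
\]
on both $\om t_x$ and $(-\bar\om)t_y$, and observes that the discrepancy between the $*$-operations on $\bH$ and on $\bH_M$ (a sum over $R^+$ versus $R_M^+$) is exactly accounted for by the correction terms after the change of variable $\gamma=\pm x^{-1}\al\in R_M^+$.
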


Applying this result to a Langlands parameter $(M,\sigma,\nu)$ as in
section \ref{sec:1.4}, we find that the Hermitian dual of $X(M,\sigma,\nu)$
is $I(M,\sigma\otimes\bC_{-\overline\nu}).$

Let $w_0$ denote the longest Weyl group element in $W$, and
let $W(w_0M)$ be the subgroup of $W$ generated by the reflections in
$w_0 R_M.$ Let $w_m$
denote a shortest element in the double coset
$W(w_0M)w_0W(M)$. Then $w_m\Pi_M$ is a subset of $\Pi,$ which we
denote by $\Pi_{w_mM}.$

\begin{proposition}[\cite{BM3}, 1.5]\label{p:1.5} The Hermitian dual of the
  irreducible Langlands quotient $L(M,\sigma,\nu)$ is 
$L(w_mM,w_{m}\sigma,-w_m{\overline\nu}).$ In particular, $L(M,\sigma,\nu)$ is Hermitian if
and only if there exists $w\in W$ such that
$$
wM=M,\quad w\sigma\cong \sigma\quad\text{and}\quad w\nu=-\overline\nu.
$$
\end{proposition}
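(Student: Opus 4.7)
The plan is to apply the preceding lemma to compute the Hermitian dual of the standard module $X(M,\sigma,\nu)$, use the submodule form of the Langlands classification to identify the result as a Langlands quotient, and then invoke the strict uniqueness in the Langlands theorem. Applying the preceding lemma to $X(M,\sigma,\nu)=I(M,\sigma\otimes\bC_\nu)$ yields $I(M,(\sigma\otimes\bC_\nu)^h)$. Since $\sigma$ is tempered, hence unitary (section \ref{sec:1.7}), we have $\sigma^h\cong\sigma$; and the restricted $*$-operation on $\bH_M$ satisfies $\omega^*_M=-\overline{\omega}$ for $\omega\in\fk t^*$ (as $\omega(\check\alpha)=0$ for $\alpha\in R_M$), whence $(\bC_\nu)^h\cong\bC_{-\overline{\nu}}$. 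Thus $X(M,\sigma,\nu)^h\cong I(M,\sigma\otimes\bC_{-\overline{\nu}})$, and contravariance of $(\cdot)^h$ converts the Langlands surjection $X(M,\sigma,\nu)\twoheadrightarrow L(M,\sigma,\nu)$ into an embedding of $L(M,\sigma,\nu)^h$ as the unique irreducible submodule of $I(M,\sigma,-\overline{\nu})$.

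Next I would identify this submodule using $w_m$. By the long intertwining operator (the submodule form of the Langlands classification), the unique irreducible submodule of a ``dual'' standard module $I(M,\sigma,\mu)$, with $\mu$ in the negative chamber of $\Pi\setminus\Pi_M$, is precisely the Langlands quotient $L(w_mM,w_m\sigma,w_m\mu)$, where $w_m$ carries $\mu$ into the positive chamber of $\Pi\setminus\Pi_{w_mM}$. Here $w_m$ is the shortest double-coset representative in $W(w_0M)w_0W(M)$, and can be written $w_m=w_0w_0^M$ with $w_0^M$ the longest element of $W(M)$ (which fixes $\fk t$ pointwise). A direct chamber computation using that $-w_0$ is the opposition involution permuting $\Pi$ shows $\Pi_{w_mM}=-w_0\Pi_M\subset\Pi$ and that $\operatorname{Re}(\alpha(-w_m\overline{\nu}))>0$ for every $\alpha\in\Pi\setminus\Pi_{w_mM}$; hence $(w_mM,w_m\sigma,-w_m\overline{\nu})$ is a bona fide Langlands parameter, yielding $L(M,\sigma,\nu)^h=L(w_mM,w_m\sigma,-w_m\overline{\nu})$.

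For the Hermitian characterisation, $L(M,\sigma,\nu)$ is Hermitian iff $L(M,\sigma,\nu)\cong L(M,\sigma,\nu)^h=L(w_mM,w_m\sigma,-w_m\overline{\nu})$; by the strict uniqueness in the Langlands theorem, this is equivalent to $w_mM=M$, $w_m\sigma\cong\sigma$, and $w_m\nu=-\overline{\nu}$, so $w=w_m$ witnesses the existence. Conversely, given any $w\in W$ satisfying the three conditions, $(M,\sigma,\nu)=w\cdot(M,\sigma,-\overline{\nu})$ lies in the $W$-orbit of $(M,\sigma,-\overline{\nu})$; since the chamber analysis of the middle step shows that the unique positive-chamber representative of this orbit is $(w_mM,w_m\sigma,-w_m\overline{\nu})$, Langlands uniqueness forces $(w_mM,w_m\sigma,-w_m\overline{\nu})=(M,\sigma,\nu)$, giving $L\cong L^h$ and hence Hermiticity. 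The main obstacle is the chamber bookkeeping in the middle step: verifying that $w_m$ sends $-\overline{\nu}$ into the open positive chamber of the conjugated Levi $w_mM$, so that the resulting triple is a genuine Langlands parameter.
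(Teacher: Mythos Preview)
The paper does not supply its own proof of this proposition: it is quoted verbatim as \cite{BM3}, 1.5, and the text moves on immediately to defining the intertwining operator $\CA(M,\sigma,\nu)$. So there is no argument in the paper to compare your proposal against.

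Your sketch is the standard route and is essentially correct. A couple of minor remarks. First, your converse direction is a bit loose: from the existence of some $w$ with $wM=M$, $w\sigma\cong\sigma$, $w\nu=-\overline\nu$ you want to conclude that the \emph{particular} element $w_m$ also has these properties. The cleanest way to close this is exactly what the paper does in the lines following the proposition: once such a $w$ exists, the map $x\otimes(v\otimes 1_\nu)\mapsto xr_w\otimes(a_w(v)\otimes 1_{-\overline\nu})$ of (\ref{1.5.2}) is a nonzero $\bH$-map $X(M,\sigma,\nu)\to I(M,\sigma\otimes\bC_{-\overline\nu})=X(M,\sigma,\nu)^h$, whose image is $L(M,\sigma,\nu)$, and this directly furnishes the Hermitian form. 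Your ``$W$-orbit plus uniqueness of the dominant representative'' argument also works, but you should spell out that two Langlands triples $(M,\sigma,\nu)$ and $(M',\sigma',\nu')$ with $\nu,\nu'$ strictly dominant that are conjugate by some element of $W$ must in fact coincide; this is a standard chamber argument but is the point where your paragraph is vaguest. Second, your identification $w_m=w_0w_0^M$ is correct and is what makes the chamber check go through, so the ``main obstacle'' you flag is not a real obstruction.
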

\noindent If this is the case, we will denote by $a_w:w\sigma\to \sigma$ the
corresponding isomorphism.

Let $w=s_1\dots s_k$ be a reduced decomposition of $w$.  For each
simple root $\alpha$, define 
\begin{equation}\label{1.5.1}
r_{s_\alpha}=t_{s_\alpha}\alpha-c_\al;\quad r_w=r_{s_{\al_1}}\dots r_{s_{\al_k}}.
\end{equation}
Lemma 1.6 in \cite{BM3} (based on proposition 5.2 in \cite{L1}) proves
that $r_w$ does not depend on the reduced expression of $w$. 

Assume $L(M,\sigma,\nu)$ is Hermitian. Define
\begin{equation}\label{1.5.2}
\CA(M,\sigma,\nu):X(M,\sigma,\nu)\to I(M,\sigma\otimes\bC_{-\overline\nu}),\  x\otimes
(v\otimes 1_\nu)\mapsto xr_{w}\otimes (a_w(v)\otimes 1_{-\overline\nu}).
\end{equation}
One can verify, as in section 1.6 of \cite{BM3}, that this is  an intertwining
operator. The image of $\CA(M,\sigma,\nu)$ is the Langlands quotient
$L(M,\sigma,\nu).$ 

\subsection{} Let $(M,\sigma,\nu)$
be a Langlands parameter as in section \ref{sec:1.4}, and $\nu=Re~\nu+\sqrt{-1} Im~\nu$ with
$Im~\nu\neq 0.$ Set 
\begin{align}
R_{M_1}=\{\al\in R: \langle Im~\nu,\al\rangle=0,\quad R_{N_1}=\{\al\in
R: \langle Im~\nu,\al\rangle>0\}.
\end{align}
Clearly, $\Pi_M\subset R_{M_1}.$ Moreover, $R_{M_1}$ is a root
subsystem of $R.$ Set $R_{M_1}^+=R_{M_1}\cap R^+,$ and let
$\Delta_{M_1}$ denote the set of corresponding simple roots. Note that
$\Pi_M\subset \Delta_{M_1}$, but
$\Delta_{M_1}$ need not be a subset of $\Pi.$ But $\bH_M$ is naturally
a subalgebra of $\bH_{M_1}.$ 

Assume $L(M,\sigma,\nu)$ is Hermitian and let $w\in W$ be as in proposition
\ref{p:1.5}. From $w\nu=-\overline\nu,$ it follows that $w\in W(M_1).$

The triple $(M,\sigma,Re~\nu)$ is a Langlands triple for $\bH_{M_1}$, so it
makes sense to consider $\sigma_1=L_{M_1}(M,\sigma,Re~\nu).$ Moreover,
$\sigma_1$ is Hermitian in $\bH_{M_1}.$ 

\begin{proposition}\label{p:1.7}
With the notation as above,
$$I(M_1,(\sigma_1\otimes \bC_{Im~\nu}))\cong L(M,\sigma,\nu).$$
\end{proposition}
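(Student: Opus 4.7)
The plan is to prove Proposition \ref{p:1.7} by induction in stages together with a careful factoring of the inducing module, paralleling the reduction to real infinitesimal character for real reductive groups (cf.\ Knapp, Ch.\ XVI). First I would rewrite
$$
X(M,\sigma,\nu) = I(M,\sigma\otimes\bC_\nu) \cong I\bigl(M_1,\ I^{M_1}_M(\sigma\otimes\bC_\nu)\bigr),
$$
which is valid since $\Pi_M\subset\Delta_{M_1}$ makes $\bH_M$ a subalgebra of $\bH_{M_1}$.  The defining property $\langle Im\,\nu,\alpha\rangle=0$ for all $\alpha\in R_{M_1}$ places $Im\,\nu$ in the central direction of $\bH_{M_1}$, so the character $\bC_{Im\,\nu}$ factors through the center of the semisimple part of $\bH_{M_1}$ and can be pulled out of the inner induction to yield an $\bH_{M_1}$-module isomorphism
$$
I^{M_1}_M(\sigma\otimes\bC_\nu) \cong X_{M_1}(M,\sigma,Re\,\nu)\otimes\bC_{Im\,\nu}.
$$
Since $\nu\in\fh$ is dominant for $\Pi\setminus\Pi_M$ and $\Pi_M\subset\Delta_{M_1}$, the triple $(M,\sigma,Re\,\nu)$ is a genuine Langlands parameter in $\bH_{M_1}$, so Theorem \ref{t:1.4} furnishes a surjection $X_{M_1}(M,\sigma,Re\,\nu)\twoheadrightarrow\sigma_1$.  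Tensoring with $\bC_{Im\,\nu}$ and applying the exact functor $I(M_1,-)$ produces a surjection $X(M,\sigma,\nu)\twoheadrightarrow I(M_1,\sigma_1\otimes\bC_{Im\,\nu})$, so $L(M,\sigma,\nu)$ is a quotient of $I(M_1,\sigma_1\otimes\bC_{Im\,\nu})$.

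Second, I would show the induced module on the right is irreducible, hence equal to $L(M,\sigma,\nu)$.  The key tool is the intertwining operator $\CA(M,\sigma,\nu)$ from (\ref{1.5.2}), whose image is exactly $L(M,\sigma,\nu)$.  From the Hermitian condition $w\nu=-\overline\nu$ we read off $w(Im\,\nu)=Im\,\nu$, so $w\in W(M_1)$.  Choosing a reduced expression of $w$ from the simple reflections of the Coxeter group $W(M_1)$ (corresponding to $\Delta_{M_1}$) and invoking the reduced-expression independence of $r_w$ (Lemma 1.6 of \cite{BM3}, based on \cite{L1}, Proposition 5.2), the operator $r_w$ is assembled entirely from factors $r_{s_\alpha}$ with $\alpha\in\Delta_{M_1}$.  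Consequently $\CA(M,\sigma,\nu)$ is the induction to $\bH$ of the inner intertwiner $\CA_{M_1}(M,\sigma,Re\,\nu)\otimes\mathrm{id}_{\bC_{Im\,\nu}}$ acting on $X_{M_1}(M,\sigma,Re\,\nu)\otimes\bC_{Im\,\nu}$.  The image of this inner intertwiner is precisely $\sigma_1\otimes\bC_{Im\,\nu}$, so by exactness of induction the image of $\CA(M,\sigma,\nu)$ is exactly $I(M_1,\sigma_1\otimes\bC_{Im\,\nu})$; this equals $L(M,\sigma,\nu)$, yielding the claimed isomorphism.

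The principal technical obstacle is the step of factoring $r_w$ through an intertwiner internal to $\bH_{M_1}$: since $\Delta_{M_1}$ need not be a subset of $\Pi$, one must verify carefully that the product $r_w$ can be computed using a reduced expression in the $W(M_1)$-simple reflections and that the resulting factors behave well under $\bH_{M_1}\to\bH$ induction.  The length of $w$ in $W(M_1)$ versus $W$, together with the compatibility of the $r_\alpha$-elements across the two length functions, is the point that needs the most care.  Should the direct factorization argument be delicate, a robust fallback is to invoke a central-character/$W$-multiplicity count: any composition factor of $I(M_1,\sigma_1\otimes\bC_{Im\,\nu})$ shares its central character $\nu$ with $L(M,\sigma,\nu)$, and since no root of $R\setminus R_{M_1}$ is orthogonal to $Im\,\nu$, the outer induction cannot introduce additional reducibility, forcing $I(M_1,\sigma_1\otimes\bC_{Im\,\nu})$ to be irreducible.
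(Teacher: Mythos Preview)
Your argument follows the same overall strategy as the paper: induction in stages to write $X(M,\sigma,\nu)\cong I\bigl(M_1,\,X_{M_1}(M,\sigma,Re\,\nu)\otimes\bC_{Im\,\nu}\bigr)$, then identify the image of the long intertwining operator with $I(M_1,\sigma_1\otimes\bC_{Im\,\nu})$. The substantive difference is precisely at the technical point you flagged. You attempt to factor $r_w$ directly through simple reflections of $W(M_1)$ relative to $\Delta_{M_1}$; but since $\Delta_{M_1}\not\subset\Pi$, the elements $r_{s_\alpha}$ for $\alpha\in\Delta_{M_1}\setminus\Pi$ are not among the defining factors of $r_w$, and the independence-of-reduced-expression lemma you cite is stated for reduced expressions in $W$ with respect to $\Pi$. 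Your proposed factorization therefore needs an extra argument (defining $r_{s_\alpha}$ for non-simple $\alpha$ and checking the same cocycle relations), which you do not supply.

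The paper resolves this cleanly by first conjugating: choose $w'\in W$ of shortest length with $w'\Delta_{M_1}=\Pi_{M'}\subset\Pi$. This gives an algebra isomorphism $a_{w'}:\bH_{M_1}\to\bH_{M'}$ and an \emph{invertible} intertwining operator $\CA(M_1:M',\C V,Im\,\nu)$ (invertible because $\langle\alpha,Im\,\nu\rangle\neq 0$ for every root crossed by $w'$). After this conjugation, all intertwining operators live over standard parabolic subalgebras, the factorization of $r_w$ is the usual one with respect to $\Pi$, and one can compare kernels: $\ker\CA(M,\sigma,\nu)$ is identified with the induced kernel $\bH\otimes_{\bH_{M_1}}\ker\CA_{M_1}(M,\sigma,Re\,\nu)$. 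Your fallback irreducibility argument (``no root outside $R_{M_1}$ is orthogonal to $Im\,\nu$'') is morally the same invertibility statement, but the paper's conjugation makes it precise without any additional reducibility analysis.
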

 
\begin{proof}
It is known that there exists $w'\in W$ (of shortest length) which maps
$\Delta_{M_1}$ onto a subset $\Pi_{M'}$ of $\Pi.$ This defines an
isomorphism $a_{w'}:\bH_{M_1}\to \bH_{M'},$ and, similarly to section
\ref{p:1.5}, an intertwining operator $\CA(M_1:M',\C V,\nu').$ Setting
$\C V=X_{M_1}(M,\sigma,Re~\nu)$ and $\nu'=Im~\nu,$ we find that this
operator is invertible. 
\begin{equation}\label{1.7.2}
X(M,\sigma,\nu)\cong
I(M_1,X_{M_1}(M,\sigma,Re~\nu)\otimes \bC_{Im~\nu}).
\end{equation}
 Now,
$X(M,\sigma,\nu)$ maps onto $L(M,\sigma,\nu)$ via the operator $\CA(M,\sigma,\nu)$ in
(\ref{1.5.2}). On the other hand, $X_{M_1}(M,\sigma,Re~\nu)$ maps onto
$\sigma_1$ by the operator $\CA_{M_1}(M,\sigma,Re~\nu),$ and by
induction (which is an exact functor) we find a map from the right
hand side of (\ref{1.7.2}) onto
$I(M_1,(\sigma_1\otimes \bC_{Im~\nu})$ whose
kernel is $\bH\otimes_{\bH_{M_1}}(\ker\CA_{M_1}(M,\sigma,Re~\nu))\cong \ker
\CA(M',\sigma,\nu),$ which is identical to $\ker\CA(M,\sigma,\nu).$ 

\end{proof}

\begin{corollary}\label{c:1.7}
Assuming the previous notation, $L(M,\sigma,\nu)$ is unitary if and only if
$L_{M_1}(M,\sigma,Re~\nu)$ is unitary.
\end{corollary}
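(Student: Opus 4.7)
The plan is to deduce the corollary directly from Proposition \ref{p:1.7}, which identifies $L(M,\sigma,\nu)$ with the induced module $I(M_1,\sigma_1\otimes\bC_{Im~\nu})$. The task therefore reduces to showing that this induced $\bH$-module is unitary if and only if $\sigma_1$ is a unitary $\bH_{M_1}$-module.

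First I would verify that tensoring with the character $\bC_{Im~\nu}$ preserves unitarity at the level of $\bH_{M_1}$-modules. Since $Im~\nu$ lies in the central direction $\fk t_1^*$ of $\bH_{M_1}$, the $*$-involution of (\ref{1.3.1}) simplifies there to $\omega^*=-\overline{\omega}$ (the correction terms vanish because $\omega\in\fk t_1^*$ annihilates the coroots in $R_{M_1}$). As $Im~\nu$ is purely imaginary, the Hermitian form on $\sigma_1$ remains compatible with the twisted $\bH_{M_1}$-action on $\sigma_1\otimes\bC_{Im~\nu}$, so $\sigma_1$ is unitary if and only if $\sigma_1\otimes\bC_{Im~\nu}$ is. The ``only if'' direction then follows by restriction: the subspace $1\otimes(\sigma_1\otimes\bC_{Im~\nu})\subset I(M_1,\sigma_1\otimes\bC_{Im~\nu})$ is a sub-$\bH_{M_1}$-module, and applying the $\ep_M$-formula of Lemma 1.4 of \cite{BM3} with $x=y=1$ (so $\ep_{M_1}(t_1^*t_1)=1$), the restriction of the induced form to this subspace is exactly the form on $\sigma_1\otimes\bC_{Im~\nu}$; positive definiteness transfers automatically.

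The harder ``if'' direction amounts to a unitary parabolic induction statement: a unitary Hermitian $\bH_{M_1}$-module twisted by a purely imaginary character on the central subalgebra $S(\fk t_1^*)$ produces a unitary module upon induction to $\bH$. This is the Hecke algebra analogue of the real reductive group result of \cite{K}, XVI.4. I would prove it by tracing through the construction in the proof of Proposition \ref{p:1.7}: via the isomorphism (\ref{1.7.2}), the intertwining operator $\CA(M,\sigma,\nu)$ is realized as the induction of $\CA_{M_1}(M,\sigma,Re~\nu)$, and since the twist by $\bC_{Im~\nu}$ is by a unitary character, the induced Hermitian form on the image $L(M,\sigma,\nu)$ is positive definite precisely when the form on $\sigma_1$ is. A clean alternative is a continuity argument along the path $t\mapsto I(M_1,\sigma_1\otimes\bC_{t\cdot Im~\nu})$ for $t\in(0,1]$: by Proposition \ref{p:1.7} the module remains irreducible and Hermitian throughout, so the signature of the induced form is locally constant along the path, reducing the assertion at $t=1$ to the same assertion at any other point where it can be checked.

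The main obstacle is precisely this unitary parabolic induction step. Although well known to specialists, a self-contained justification requires either careful manipulation of the induced pairing (verifying that the purely imaginary twist does not disturb positive definiteness) or a continuity-of-signature argument combined with control of reducibility along the deformation; in either case, the key ingredient is the irreducibility furnished by Proposition \ref{p:1.7}, which reduces the problem to a positivity check on the inducing module $\sigma_1$.
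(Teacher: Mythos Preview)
Your approach is essentially the same as the paper's: both invoke Proposition~\ref{p:1.7} to identify $L(M,\sigma,\nu)$ with the induced module $I(M_1,\sigma_1\otimes\bC_{Im~\nu})$ and then appeal to the standard fact that parabolic induction of a unitary module twisted by a purely imaginary central character is unitary (and conversely). The paper simply states this fact in one sentence, whereas you sketch two arguments for it; your intertwining-operator sketch and the continuity alternative are each a bit underspecified (in the deformation argument you never name a basepoint where positivity is actually verified), but the overall structure matches the paper exactly.
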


\begin{proof}
This follows immediately from proposition \ref{p:1.7}, and the fact
that if $\chi$ is a purely imaginary character and $I(M,U\otimes
\bC_{\chi})$ is irreducible, then $I(M,U\otimes \bC_{\chi})$ is
unitary for $\bH$ if and only if $U$ is unitary for $\bH_M.$  
\end{proof}

\section{Geometric Hecke algebras}\label{sec:3}

We will restrict to the case of {\it geometric Hecke
algebras}, as in \cite{L2}-\cite{L4}. We review some facts about these
algebras and their geometric classification.

\subsection{}
For an algebraic group $\mathbf G$ and a $\mathbf G$-variety $X$, let
$H_\mathbf G^\bullet(X)=H_\mathbf G^\bullet(X,\bC)$, respectively
$H^\mathbf G_\bullet(X)=H^\mathbf G_\bullet(X,\bC)$ denote the equivariant cohomology,
respectively homology (as in section 1 of \cite{L2}). The component
group of $\mathbf G$, $A(\mathbf G)=\mathbf G/\mathbf G^0$ acts naturally on $H_{\mathbf G^0}^\bullet(X)$ and
$H^{\mathbf G^0}_\bullet(X)$.
The cup product defines a structure of graded $H_\mathbf G^\bullet(X)$-module on
$H^\mathbf G_\bullet(X).$ If $pt$ is a point of $X$, one uses the notation
$H_\mathbf G^\bullet=H_\mathbf G^\bullet(\{pt\}),$ respectively $H^\mathbf
G_\bullet=H^\mathbf G_\bullet(\{pt\}).$ There is a
$\bC$-algebra homomorphism $H_\mathbf G^\bullet\to H_\mathbf G^\bullet(X)$ induced by the map
$X\to\{pt\}$, and therefore $H_\mathbf G^\bullet$ and $H^\mathbf G_\bullet$ can both
be considered as $H_\mathbf G^\bullet$-modules.

For a subset $\C S$ of $\mathbf G$, or $\mathbf{\fg}$, let $Z_{\mathbf
  G}(\C S)$, $N_{\mathbf G}(\C S)$ denote the centralizer,
respectively the normalizer of $\C S$ in $\mathbf G.$ 

\medskip

Let $G$ be a reductive connected complex algebraic group, with Lie
algebra $\fg.$ Let $P=LN$ denote a parabolic subgroup, with $\fk p=\fk
l+\fk n$ the corresponding Lie algebras, such that $\fk l$ admits an
irreducible $L$-equivariant
cuspidal local system (as in \cite{L2},\cite{L5}) $\C L$ on a
nilpotent $L$-orbit $\C C\subset \fk l.$ The classification of
cuspidal local systems can be found in \cite{L5}. In particular, $W=N(L)/L$ is
a Coxeter group. 

Let $H$ be the center of $L$ with Lie algebra $\fk h$, and let $R$ be
the set of nonzero weights $\al$ for the $ad$-action of $\fk h$ on $\fk g,$
and $R^+\subset R$ the set of weights for which the corresponding
weight space $\fg_\al\subset\fk n.$ For each parabolic $P_j=L_jN_j$, $j=1,n$, such that
$P\subset P_j$ maximally and $L\subset L_j$, let $R_j^+=\{\al\in R^+:
\al(\fz(\fk l_j))=0\},$ where $\fz(\fk l_j)$ denotes the center of
$\fk l_j.$ It is shown in \cite{L2} that each $R_j^+$ contains a unique
$\al_j$ such that $\al_j\notin 2R.$ 

Let $Z_G(\C C)$ denote the centralizer in $G$ of a Lie triple for $\C
C,$ and $\fz(\C C)$ its Lie algebra. 

\begin{proposition}[\cite{L2}]\ 

(a) $R$ is a (possibly non-reduced) root system in $\fk h^*$, with
simple roots $\Pi=\{\al_1,\dots,\al_n\}.$ Moreover, $W$ is the
corresponding Weyl group.

(b) $H$ is a maximal torus in $Z^0=Z^0_G(\C C)$.

(c) $W$ is isomorphic to $W(Z^0_G(\C C))=N_{Z^0}(H)/H.$

(d) The set of roots in $\fz(\C C)$ with respect to $\fk h$ is exactly
the set of reduced roots in $R.$  

\end{proposition}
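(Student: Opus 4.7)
The plan is to exploit the strong structural constraint imposed by cuspidality: by Lusztig's classification of cuspidal pairs, the orbit $\C C$ is \emph{distinguished} in $\fk l$ in the Bala--Carter sense, so any torus in $L$ centralizing a Jacobson--Morozov triple for $\C C$ is forced to lie in the center $H$. I would derive all four parts from this rigidity, proving (b), (c), (a), (d) in that order.

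For (b), $H\subset L$ commutes with $\C C$, so $H\subset Z^0=Z^0_G(\C C)$. If a torus $T'\supset H$ in $Z^0$ were strictly larger, then $T'\subset Z_G(H)=L$ (since $L$ is a Levi and $H$ is its center), and $T'$ would be a torus in $L$ commuting with a Jacobson--Morozov triple for $\C C$; the distinguished property of $\C C$ forces $T'=H$.

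For (c), I would construct two mutually inverse natural maps. Any $n\in N_{Z^0}(H)$ conjugates $Z_G(H)=L$ to itself, so $n\in N_G(L)$, inducing $N_{Z^0}(H)/H\to N_G(L)/L$. Conversely, any $w=nL\in W$ preserves $H$ and the pair $(\C C,\C L)$ up to isomorphism (this is built into the setup which yields the cuspidal local system's equivariant $W$-action); so $n$ maps a Jacobson--Morozov triple in $\C C$ to another, and after correcting $n$ by a suitable element of $L$ we may assume it stabilizes a chosen triple. The corrected representative lies in $N_{Z_G(\C C)}(H)$, and since the components of $Z_G(\C C)/Z^0$ that arise this way are absorbed when we mod out by $L$, we get a well-defined element of $N_{Z^0}(H)/H$. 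A short verification shows the two maps are inverse.

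For (a), $W$ acts on $H$ by conjugation, hence on $\fk h^*$ preserving $R$. For each intermediate maximal parabolic $P_j=L_jN_j$ with $P\subsetneq P_j\subsetneq G$, apply a rank-one reduction: $(L,\C C,\C L)$ sits inside $(L_j,\C C,\C L)$ with $L/H$ of corank one in $L_j/H$, and Lusztig's rank-one analysis produces a nontrivial element $s_j\in N_{L_j}(L)/L\subset W$ which acts on $\fk h^*$ as a reflection fixing $\al_j=0$. The $s_j$ generate $W$ because the $P_j$ exhaust the proper intermediate parabolics. Uniqueness of $\al_j\notin 2R$ in $R_j^+$ makes each rank-one subsystem $R_j$ of type $A_1$ or $BC_1$, whence $R$ is a (possibly non-reduced) root system with simple roots $\{\al_1,\dots,\al_n\}$ and Weyl group $W$. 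For (d), combining (b) with the reductivity of $Z^0$, the set of roots of $\fz(\C C)$ with respect to $H$ is an honest reduced root system $R'\subset R$; and given any reduced $\al\in R$, the same rank-one analysis used in (a) produces root vectors $e_{\pm\al}\in\fg_{\pm\al}$ spanning an $\mathfrak{sl}_2$ that commutes with a Jacobson--Morozov triple of $\C C$, hence lies in $\fz(\C C)$, giving $\al\in R'$. The main obstacle I anticipate is precisely this rank-one analysis: both producing the $\mathfrak{sl}_2$ inside $\fz(\C C)$ for reduced $\al$, and ruling out that a nonreduced $\al\in R$ (with $2\al\in R$) can contribute to $\fz(\C C)$, requires entering the explicit list of cuspidal Levis and their distinguished orbits, and is where the bulk of the work lies.
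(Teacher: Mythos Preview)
The paper does not supply its own proof of this proposition; it is quoted directly from \cite{L2} (Lusztig's \S2, especially Proposition~2.2 and the surrounding discussion) as background for the geometric construction. So there is no in-paper argument to compare against, only Lusztig's original one. Your outline is in fact close to Lusztig's strategy: the distinguished property of $\C C$ in $\fk l$ is exactly what drives (b), and the rank-one reduction through the intermediate maximal parabolics $P_j$ is how Lusztig produces the simple reflections and establishes (a) and (d). You are also right that the rank-one analysis (identifying $Z^0_{L_j}(\C C)$ and locating the $\mathfrak{sl}_2$'s) is where the substantive case-checking against the list of cuspidal data occurs.

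There is one genuine gap, in your surjectivity argument for (c). After correcting $n\in N_G(L)$ by some $l\in L$ so that $ln$ fixes the chosen Jacobson--Morozov triple, you have $ln\in Z_G(\C C)\cap N_G(H)$, but not $ln\in Z^0$. Your claim that ``the components of $Z_G(\C C)/Z^0$ that arise this way are absorbed when we mod out by $L$'' amounts to asserting that the image of $ln$ in $A(\C C)=Z_G(\C C)/Z^0$ always lies in the image of $Z_L(\text{triple})\to A(\C C)$; but $Z_L(\text{triple})$ need not be connected (only its identity component equals $H$, by distinguishedness), and you have given no argument that this map hits the required component. The clean fix, and what Lusztig actually does, is to reverse your order: carry out the rank-one step for (a) first, observe that each simple reflection $s_j$ is realized inside $N_{L_j}(L)/L$ and lifts to $N_{Z^0_{L_j}(\C C)}(H)\subset N_{Z^0}(H)$ (since $Z^0_{L_j}(\C C)$ is connected of semisimple rank one with maximal torus $H$), and then conclude surjectivity in (c) from the fact that the $s_j$ generate $W$. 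With that reordering your sketch becomes a correct outline of Lusztig's proof.
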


For each $j=1,n$, let $c_j\ge 2$ be such that 
\begin{equation}
ad(e)^{c_j-2}:\fk l_j\cap\fk n\to\fk l_j\cap\fk n \neq 0, \text{ and }
ad(e)^{c_j-1}:\fk l_j\cap\fk n\to\fk l_j\cap\fk n =0.
\end{equation}

By proposition 2.12 in \cite{L2}, $c_{i}=c_j$ whenever $\al_i$ and
$\al_j$ are $W$-conjugate. Therefore, one can define a Hecke algebra
$\bH$ as in (\ref{1.1.1}),(\ref{1.1.2}). In view of the proposition
above, one can think of this algebra as essentially a graded Hecke
algebra with unequal parameters for the centralizer $\fz(\C C).$  The
explicit algebras which may appear are listed in 2.13 of
\cite{L2}. The more familiar case of Hecke algebras with equal
parameters arise when one takes $P$ to be a Borel subgroup, and $\C C$
and $\C L$ to be trivial.

\medskip

The geometric realization of $\bH$ is obtained as follows. 
Consider the varieties
\begin{align}
\dot{\fg}_N&=\{(x,gP)\in \fg\times G/P|\ Ad(g^{-1})x\in \fk n\},\\\notag
\ddot{\fg}_N&=\{(x,gP,g'P)\in \fg\times G/P\times G/P|\ (x,gP),(x,g'P)\in
\dot{\fg}_N\}.
\end{align} 
Clearly $\ddot{\fg}_N\subset \dot{\fg}_N\times \dot{\fg}_N.$ The local
system $\C L$ gives a local systems $\dot {\C L}$ on $\dot{\fg}_N$ and
$\ddot {\C L}$ on $\ddot{\fg}_N$. When $P$ is a Borel subgroup and $\C
L$ is trivial, one recovers the classical objects: $\dot{\fg}_N$ is
the cotangent bundle of the flag variety of $G$, and
$\ddot{\fg}_N$ is the Steinberg variety.

The group $G\times \bC^*$ acts on $\fg$ by $(g_1,\lambda)\cdot
x=\lambda^{-2} Ad(g_1) x,$ for every $x\in \fg, g_1\in G,\lambda\in
\bC^*.$ 
In
\cite{L2}, the vector space $H_\bullet^{G\times
  \bC^*}(\ddot{\fg}_N,\ddot{\C L})$ is endowed with left and right actions of $W$ and
$S(\fh^*\oplus \bC)\cong H^\bullet_{G\times \bC^*}(\dot{\fg}_N)$, and
it is proved (theorem 6.3 and corollary 6.4) that
\begin{equation}
H_\bullet^{G\times \bC^*}(\ddot\fg_N,\bC)\cong \bH, \text{ as }\bH\text{-bimodules}.
\end{equation}

\subsection{} Now we recall the construction of standard modules for
$\bH$. Fix  a nilpotent element $e$ in $\fg,$ and let $\C P_e$ be the
variety
\begin{equation}
\C P_e=\{gP\in G/P: Ad(g^{-1})e\in \C C+\fk n\}.
\end{equation}

The centralizer $Z_{G\times \bC^*}(e)$ acts on $\C P_e$ by
$(g_1,\lambda).gP=(g_1g)P.$ 

\cite{L2} constructs actions of $W$ and $S(\fh^*\oplus\bC)$ on
$H_\bullet^{Z^0_{G\times \bC^*}(e)}(\C P_e,\dot{\C L})$, and proves that these are compatible
with the relations between the generators of $\bH,$ therefore obtaining a module of $\bH$
(theorem 8.13).
The component group $A_{G\times \bC^*}(e)$ acts on
$H_\bullet^{Z^0_{G\times \bC^*}(e)}(\C P_e,\dot{\C L})$, and commutes with the
$\bH$-action (8.5).

Consider the variety $\C V$ of semisimple $Z^0_{G\times \bC^*}(e)$-orbits
on the Lie algebra $\fz_{G\times \bC^*}(e)=\{(x,r_0)\in \fg\oplus\bC:
[x,e]=2r_0e\}$ of $Z_{G\times \bC^*}(e).$ The affine variety $\C V$ has
$H^\bullet_{Z^0_{G\times \bC^*}(e)}$ as the coordinate ring. Define the
$\bH$-modules
\begin{align}
X(s,r_0,e)=\bC_{(s,r_0)}\otimes_{H^\bullet_{Z^0_{G\times
      \bC^*}(e)}} H_\bullet^{Z^0_{G\times \bC^*}(e)}(\C P_e,\dot{\C L}), 
\end{align} 
where $\bC_{(s,r_0)}$ denotes the ${H^\bullet_{Z^0_{G\times
      \bC^*}(e)}}$-module given by the evaluation at $(s,r_0)\in \C
V,$ ${H^\bullet_{Z^0_{G\times
      \bC^*}(e)}}\to \bC.$ 

Let $A_{G\times \bC^*}(e,s,r_0)$ denote the stabilizer of $(s,r_0)$ in
$A_{G\times \bC^*}(e).$ For each $\psi\in \widehat {A_{G\times \bC^*}(e,s,r_0)},$ define 
\begin{align}
X(s,r_0,e,\psi)=\Hom_{A_{G\times \bC^*}(e,s,r_0)}[\psi:X(s,r_0,e)].
\end{align}
In particular, when $(s,r_0)=\mathbf 0$, we have (7.2, 8.9 in \cite{L2} and 10.12(d) in \cite{L3}) 
\begin{align}\label{1.9.4}
X(\mathbf 0,e)\cong H_\bullet^{\{1\}}(\C P_e,\dot{\C L})\text{ as } W\times A_{G}(e)\text{-modules.} 
\end{align}

Let $\widehat {A_{G}(e)^0}$ and $\widehat {A_{G}(e,s)^0}$ denote the set of 
representations $\psi$ which appear in the $A_{G}(e)$-module $H_\bullet^{\{1\}}(\C
P_e,\dot{\C L})$, respectively in the restriction of this module  to
$A_{G}(e,s).$ 

\begin{theorem}[\cite{L2},\cite{L3}] Assume $r_0\neq 0.$

\noindent(a)(\cite{L2},8.10) $X(s,r_0,e,\psi)\neq 0$ if and only if $\psi\in \widehat {A_{G}(e,s,r_0)^0}.$

\noindent(b)(\cite{L2},8.15) Any simple $\bH$-module on which $r$ acts
by $r_0$ is a quotient $\overline X(s,r_0,e,\psi)$ of an
$X(s,r_0,e,\psi)$, where $\psi\in \widehat {A_{G}(e,s)^0}.$

\noindent(c)(\cite{L3},8.18) The set of isomorphism classes of simple
$\bH$-modules with central character $(s,r_0)$ is in 1-1
correspondence with the set 
\begin{equation}\label{3.2.5}
\C M_{s,r_0}=Z_G(s)\text{-conjugacy
  classes on }
\{(e,\psi): [s,e]=2r_0e,\ \psi\in \widehat {A_G(e,s)^0}\}.
\end{equation}
\end{theorem}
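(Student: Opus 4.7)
The plan is to follow the geometric strategy of \cite{L2}, \cite{L3}, exploiting the convolution realization $\bH \cong H^{G\times \bC^*}_\bullet(\ddot{\fg}_N, \ddot{\C L})$ and its action on the fiber modules $H^{Z^0_{G\times \bC^*}(e)}_\bullet(\C P_e, \dot{\C L})$ just recalled.

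For part (a) the main tool is equivariant localization at the point $(s,r_0) \in \C V$. The hypothesis $r_0 \neq 0$ is essential: the $\bC^*$-factor then acts on $\fg$ with nonzero weight, contracting neighborhoods onto the nilpotent cone, which makes the specialization
\[
\bC_{(s,r_0)} \otimes_{H^\bullet_{Z^0_{G\times\bC^*}(e)}} H^{Z^0_{G\times\bC^*}(e)}_\bullet(\C P_e,\dot{\C L})
\]
free of equivariant torsion. A change-of-groups/Leray spectral sequence argument then identifies its character, as an $A_{G\times\bC^*}(e,s,r_0)$-module, with that of the ordinary homology $H^{\{1\}}_\bullet(\C P_e,\dot{\C L})$ viewed as an $A_G(e)$-module and restricted along $A_{G\times\bC^*}(e,s,r_0) \hookrightarrow A_G(e)$. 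In view of (\ref{1.9.4}), this gives the nonvanishing criterion $\psi \in \widehat{A_G(e,s,r_0)^0}$.

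For part (b), let $V$ be simple with $r$ acting by $r_0 \neq 0$, and let $(s,r_0)$ be its central character. To realize $V$ as a quotient of some $X(s,r_0,e,\psi)$, I would observe that the center of $\bH$ acts through the maximal ideal $\fm_{(s,r_0)}$, so $V$ factors through the finite-dimensional specialization $\bH/\bH\cdot\fm_{(s,r_0)}$. The geometric realization identifies this specialization with a convolution algebra acting on $\bigoplus_{e} X(s,r_0,e)$, the sum running over $Z_G(s)$-orbit representatives of $\{e : [s,e]=2r_0 e\}$, with $A_G(e,s)$ acting compatibly. A standard generating-vector/Frobenius-reciprocity argument, selecting an $\bA$-weight vector in $V$ whose weight is $W$-conjugate to $s$, then produces a nonzero $\bH$-equivariant surjection $X(s,r_0,e,\psi) \to V$ for some $(e,\psi)$, with $\psi \in \widehat{A_G(e,s)^0}$ forced by part (a).

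Part (c) is obtained by combining (a), (b) with a uniqueness/rigidity step. Each $X(s,r_0,e,\psi)$ admits a unique simple quotient $\overline X(s,r_0,e,\psi)$ by a Langlands-type argument exploiting the top-degree grading piece, and $Z_G(s)$-conjugation of $(e,\psi)$ clearly intertwines the corresponding modules, yielding a well-defined map $\C M_{s,r_0} \to \{\textrm{simples with central character }(s,r_0)\}$. Surjectivity is (b), and nontriviality of each head is (a). The main obstacle, and the deepest input taken from \cite{L3}, is injectivity: distinct $Z_G(s)$-orbits of pairs $(e,\psi)$ must yield non-isomorphic simples. I would prove this by the Borho--MacPherson strategy adapted to the cuspidal local system setting: apply the decomposition theorem to the generalized Springer map $\dot\fg_N \to \fg$ (with coefficients in $\dot{\C L}$), restrict to the $(s,r_0)$-fixed locus, and extract $\overline{Z_G(s)\cdot e}$ from the support of the characteristic cycle of $\overline X(s,r_0,e,\psi)$ and the representation $\psi$ from the top-degree $A_G(e,s)$-isotypic component. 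This is where the full force of the generalized Springer correspondence of \cite{L5} and the refined equivariant intersection cohomology analysis of \cite{L3} is required.
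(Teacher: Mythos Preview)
The paper does not give a proof of this theorem at all: it is stated purely as a citation of results from \cite{L2} (8.10, 8.15) and \cite{L3} (8.18), and is then used as input for the subsequent discussion of lowest $W$-types and tempered modules. There is therefore nothing in the paper to compare your argument against.

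That said, your sketch is a fair high-level summary of the strategy in Lusztig's original papers: equivariant localization/freeness for (a), realization of simples through the convolution action on the fiber homologies for (b), and the decomposition theorem plus generalized Springer correspondence for the injectivity in (c). If you were writing this for a paper that actually needed to reprove the result, each of your paragraphs would need to be expanded substantially---in particular the freeness statement in (a) and the precise mechanism by which the decomposition theorem separates the $(e,\psi)$ in (c) are exactly the technical hearts of \cite{L2} and \cite{L3}---but as a roadmap of those proofs your outline is accurate.
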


\subsection{}\label{sec:3.3} Fix a semisimple class $(s,r_0)\in \C V.$ The following
theorem is immediately implied by the results in \cite{L3}, 10.5-10.7.

\begin{theorem}[\cite{L3}]\label{t:3.3} If a composition factor $Y$ of the standard
  module $X(s,r_0,e,\psi)$ is parameterized by the class $(e',\psi')\in
  \C M_{s,r_0}$,
  then necessarily $\CO\subset \overline{\CO'}$, where $\CO$, $\CO'$
  are the $Z_G(s)$-orbits of $e$, respectively $e'$ in $\C M_{s,r_0}.$
  Moreover, $\CO=\CO'$ if and only if $Y=\overline X(s,r_0,e,\psi).$ 
\end{theorem}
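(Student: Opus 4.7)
The plan is to deduce both claims from the perverse-sheaf picture underlying Lusztig's construction, invoking the decomposition theorem together with the standard support estimate for intersection cohomology complexes.

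First I would set up the geometric realization of $X(s,r_0,e,\psi)$ as a stalk. Restricting the projection $\pi\colon \dot{\fg}_N\to\fg$, $(x,gP)\mapsto x$, to the subvariety $\fg_{s,r_0}=\{x\in\fg: [s,x]=2r_0 x\}$, the pushforward $\pi_*\dot{\C L}[\dim]$ is a $Z_G(s)$-equivariant semisimple perverse sheaf on $\fg_{s,r_0}$. By the decomposition theorem (as used in \cite{L3}, 10.5), it decomposes as
\begin{equation*}
\pi_*\dot{\C L}[\dim]\;\cong\;\bigoplus_{(e',\psi')}\; IC\bigl(\overline{\CO'},\psi'\bigr)\otimes V_{e',\psi'},
\end{equation*}
where the sum runs over $Z_G(s)$-orbits $\CO'$ on $\fg_{s,r_0}$ together with $\psi'\in\widehat{A_G(e',s)^0}$, and $V_{e',\psi'}$ are (possibly zero) multiplicity spaces. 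By (\ref{1.9.4}) and the definition of $X(s,r_0,e,\psi)$, this module is the $\psi$-isotypic component of the costalk at $e$, i.e.\ of $i_e^*\pi_*\dot{\C L}$, where $i_e\colon\{e\}\hookrightarrow\fg_{s,r_0}$ is the inclusion.

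Next I would use the support condition for IC sheaves to control composition factors. Under the bijection of Lusztig between simple $\bH$-modules with central character $(s,r_0)$ and the set $\C M_{s,r_0}$, the simple module $\overline X(s,r_0,e',\psi')$ corresponds to the summand $IC(\overline{\CO'},\psi')$. Applying $i_e^*$ to the decomposition above, the summand $IC(\overline{\CO'},\psi')$ contributes to the stalk at $e$ precisely when $e\in\overline{\CO'}$, equivalently $\CO\subset\overline{\CO'}$. Because every composition factor of $X(s,r_0,e,\psi)$ arises as some $\overline X(s,r_0,e',\psi')$ with nonzero contribution at $e$, this forces the desired closure relation. The formal identification of composition factors with IC contributions in this way is exactly the content of the multiplicity formula of \cite{L3}, 10.6--10.7, essentially a generalized Green polynomial calculation.

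Finally, to isolate the unique quotient I would compute the open-stratum part of the stalk. When $\CO'=\CO$, the stalk $i_e^*IC(\overline{\CO},\psi')[-\dim\CO]$ is just the fibre of the local system $\psi'$ at $e$, concentrated in top degree; thus its contribution to $i_e^*\pi_*\dot{\C L}$ is independent of any lower-dimensional orbits and appears in a distinguished cohomological degree. By theorem on standard modules already cited from \cite{L2}, 8.15, each $X(s,r_0,e,\psi)$ has a unique simple quotient, and matching the top-degree contribution with $\psi$ shows that this quotient is precisely the $(e,\psi)$-factor; composition factors from strictly larger orbits $\CO'\supsetneq\CO$ live in strictly lower degrees of the stalk filtration and therefore sit in the radical. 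The principal obstacle is a bookkeeping one: making the identification of standard modules with costalks compatible with the $A_G(e,s)$-isotypic decomposition, so that the $\psi$-component on the geometric side matches the $\psi$-parametrization on the algebraic side; once this is arranged (as in \cite{L3}, \S 10), both assertions follow from the two support statements above.
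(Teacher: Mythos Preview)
Your sketch is essentially correct and follows the argument in \cite{L3}, \S 10, which is exactly what the paper invokes: the paper does not give its own proof of this theorem but states that it ``is immediately implied by the results in \cite{L3}, 10.5--10.7.'' Your outline---decomposition theorem for $\pi_*\dot{\C L}$, support constraints on the IC summands, and the top-degree stalk isolating the simple quotient---is precisely the mechanism behind those sections of \cite{L3}, so there is nothing to compare beyond noting that you have unpacked the citation. One small notational slip: near the end you write ``strictly larger orbits $\CO'\supsetneq\CO$,'' but orbits in $\fg_{s,r_0}$ are disjoint; you mean $\CO\subsetneq\overline{\CO'}$, as you correctly wrote earlier.
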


This is the geometric equivalent of the classical results for real and
$p$-adic groups concerning
the minimality of the Langlands parameter in a standard module (see
\cite{BW}, IV.4.13 and XI.2.13).

Finally, an immediate corollary of theorem \ref{t:3.3}  (see 8.17 in
\cite{L2}, equivalently 10.9 in \cite{L3}) is that, if $\CO$ is the
unique open $Z_G(s)$-orbit in $\C M_{s,r_0},$ then $X(s,r_0,e,\psi),$
$\psi\in \widehat {A_G(e,s)^0}$ is simple.

\subsection{}\label{sec:3.4} An important role in the determination of the unitary
dual will be played by the $W$-structure of standard and simple
modules. 

The continuation argument in 10.13 in \cite{L3} in conjunction with
(\ref{1.9.4}) shows that
\begin{equation}\label{3.4.1}
X(s,r_0,e)|_W\cong H_\bullet^{\{1\}}(\mathcal P_e,\dot{\C L}), \text{ as } W\times A_G(s,e)\text{-representations}. 
\end{equation}
(By definition, $H_j^{\{1\}}(X,\C L)=H_c^{2dim(X)-j}(X,\C L^*)^*$,
where $H^\bullet_c$ denotes the cohomology with compact support, while
$^*$ denotes the dual vector space or local system.)

Denote
\begin{equation} 
\C N_G=\text{ $G$-conjugacy classes of pairs
 }\{(e,\phi): e\in\fg\text{ nilpotent}, \phi\in \widehat {A_G(e)}\}.
\end{equation}
The {\it
  generalized Springer correspondence} (\cite{L5}) gives, in particular,
an injection
\begin{equation}\label{3.4.2}
\Phi_{\C C,\C L}:\widehat W\hookrightarrow \C N_G.
\end{equation}
(Recall that $W=N_G(L)/L.$) The case of the classical Springer
correspondence appears when
$P$ a Borel subgroup, $\C C$ and $\C L$ are trivial, and $W$ is the
Weyl group of $G$. 

More precisely, let $(e,\phi)$ be a $G$-conjugacy class in $\C N_G$. Set
$\CO_e=G\cdot e,$ and let $\varepsilon_\phi$ be the local system on
$\CO_e$ corresponding to $\phi.$ Then
\cite{L5} (6.2,6.3) attaches to $(e,\phi)$ a unique $G$-conjugacy class $(L',\C C',\C
L')$, where $L'\subset P'$ is a Levi subgroup, $\C C'$ a nilpotent
$L'$-orbit in $\fk l'$, and $\C L'$ is a local system on $\C
C'$, such that:
\begin{align}\label{3.4.3}
 &(a)\ H_c^{dim(\CO_e)-dim(\C C')}(\CO_e\cap(\C C'+\fk n'),\varepsilon_\phi)\neq 0;\\\notag
 &(b)\ P'\text{ is minimal with respect to }(a). 
\end{align}
The local system $\C L'$ on $\C C'$ is constructed from
$\varepsilon_\phi$ (see \cite{L5}, 6.2, for the precise
definition). 
(Note that in fact, (\ref{3.4.3}) gives a  definition of cuspidal: one
could define $\C L'$  to be a cuspidal local system for $G$  if
in (\ref{3.4.3}), $P'=G.$) It is shown in \cite{L5} that all $\C L'$
appearing in this way must be cuspidal for the corresponding $L'$.

If we denote by $\C M_{\C C,\C L}$ the subset of $\C N_G$ attached to
$(L,\C C,\C L)$ by (\ref{3.4.3}), the generalized Springer
correspondence (\ref{3.4.2}) can be reformulated as a bijection $\C
M_{\C C,\C L}\leftrightarrow \widehat W.$ For $(e,\phi)\in \C M_{\C
  C,\C L}$, there corresponds
an irreducible $W$-representation, which we will denote by
$\mu(\CO_e,\phi),$ constructed in $\Hom_{A(e)}[\phi: H_c^{\bullet}(\C
P_e,\dot{\C L})].$  

With respect to the closure
ordering of nilpotent orbits, the smallest orbit, $\CO_{min}$, appearing in
$\text{Im}\Phi_{\C C,\C L}$ is $G\cdot\C C,$ and the largest orbit,
$\CO_{max}$, is the 
Lusztig-Spaltenstein induced orbit $\text{Ind}_L^G(\C C).$ Moreover,
each one of these two orbits supports exactly one local system
(denoted $\phi_{max}$ and $\phi_{min},$ respectively) which enters the parameterization of $\bH.$

The correspondence is normalized such that
\begin{equation}\notag
\mu(\CO_{min},\phi_{min})=triv\text{ and
}\mu(\CO_{max},\phi_{max})=sgn.
\end{equation}

\subsection{}\label{sec:3.5} The previous discussion gives a classification of simple
$\bH$-modules via {\it lowest $W$-types}. This is the analogue of
Vogan's classification by lowest $K$-types for real reductive groups
(\cite{green}), and it appeared in the setting of equal parameter
affine Hecke algebras in \cite{BM1}.
 
Fix $\psi\in \widehat {A_G(s,e)}$. If
$\phi\in \widehat {A_G(e)^0}$  and 
\begin{equation}\label{2.3.4}
 \text{Hom}_W[(\mu(\CO_e,\phi):X(s,r_0,e,\psi)]\neq
 0,\end{equation}
then we will call $\mu(\CO,\phi)$ a {\it lowest $W$-type} for
$X(s,r_0,e,\psi)$. 

\begin{proposition}\label{p:3.5}
The simple module $\overline X(s,r_0,e,\psi)$ is the unique
composition factor of $X(s,r_0,e,\psi)$ which contains the lowest $W$-types $\mu(\CO_e,\phi)$
  with multiplicity $[\phi\mid_{A_G(s,e)}\ :\ \psi].$ Moreover, if
  $\mu'$ is a $W$-type appearing in $\overline X(s,r_0,e,\psi)$,
  different than the lowest $W$-types, then $\Phi_{\C C,\C L}(\mu')=(\CO',\phi'),$
  for some $\CO'$ such that $\CO\subset \overline
  {\CO'}\setminus\CO'.$ 
\end{proposition}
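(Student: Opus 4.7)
The plan is to compare the $W$-structure of $X(s,r_0,e,\psi)$, which is accessible geometrically via (\ref{3.4.1}), with the orbit-closure restrictions on composition factors provided by Theorem~\ref{t:3.3}. Since $r_0$ is a scalar, $A_{G\times\bC^*}(e,s,r_0)$ coincides with $A_G(s,e)$, and (\ref{3.4.1}) identifies $X(s,r_0,e)|_W$ with $H_\bullet^{\{1\}}(\C P_e,\dot{\C L})$ as a $W\times A_G(s,e)$-module. Frobenius reciprocity then gives, for any irreducible $\mu\in\widehat W$,
\begin{equation*}
[\mu:X(s,r_0,e,\psi)]_W \;=\; \sum_{\phi\in\widehat{A_G(e)^0}}[\phi|_{A_G(s,e)}:\psi]\cdot[\mu:W_\phi]_W,
\end{equation*}
where $W_\phi=\Hom_{A_G(e)}[\phi:H_\bullet^{\{1\}}(\C P_e,\dot{\C L})]$ is the $W$-multiplicity space.

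Next, I would analyze the $W\times A_G(e)$-structure of $H_\bullet^{\{1\}}(\C P_e,\dot{\C L})$ using the generalized Springer correspondence together with the decomposition theorem applied to Lusztig's induction complex, restricted to a point of $\CO_e$. The key dichotomy is: (a)~the top-degree contribution is precisely $\bigoplus_{\phi\in\widehat{A_G(e)^0}}\mu(\CO_e,\phi)\otimes\phi$, so $\mu(\CO_e,\phi)$ occurs in $W_\phi$ with $W$-multiplicity one and in no other $W_{\phi'}$; (b)~every other irreducible $W$-constituent is of the form $\mu(\CO',\phi')$ with $\CO_e\subsetneq\overline{\CO'}$, because only IC sheaves whose support contains $e$ can contribute at $e$. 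Substituting (a) into the above identity yields $[\mu(\CO_e,\phi):X(s,r_0,e,\psi)]_W = [\phi|_{A_G(s,e)}:\psi]$, and (b) shows that every remaining $W$-type of $X(s,r_0,e,\psi)$ has the form $\mu(\CO',\phi')$ with $\CO_e\subsetneq\overline{\CO'}$.

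To transfer these statements to the Langlands quotient, apply Theorem~\ref{t:3.3}: every other composition factor of $X(s,r_0,e,\psi)$ is some $\overline X(s,r_0,e'',\psi'')$ with $\CO_{e''}\supsetneq \CO_e$; repeating the above analysis for $X(s,r_0,e'',\psi'')$ shows that all of its $W$-constituents are $\mu(\CO''',\phi''')$ with $\CO_{e''}\subset\overline{\CO'''}$, and hence $\CO'''\neq\CO_e$ (otherwise $\CO_e\supset\CO_{e''}$, contradicting $\CO_{e''}\supsetneq\CO_e$). Therefore no $\mu(\CO_e,\phi)$ appears in any other composition factor, and the full multiplicity $[\phi|_{A_G(s,e)}:\psi]$ is concentrated in $\overline X(s,r_0,e,\psi)$, giving the first assertion. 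The second is immediate: every $W$-type of $\overline X(s,r_0,e,\psi)$ already occurs in $X(s,r_0,e,\psi)$, and (b) forces any non-lowest one to satisfy $\Phi_{\C C,\C L}(\mu')=(\CO',\phi')$ with $\CO\subset\overline{\CO'}\setminus\CO'$. The main obstacle is establishing statement~(b); this is the geometric heart of the argument, effectively the decomposition-theorem structure of Lusztig's induction complex, while everything else is clean bookkeeping using Theorem~\ref{t:3.3}.
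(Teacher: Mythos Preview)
Your proposal is correct and follows essentially the same approach as the paper's proof, which simply reads ``It follows directly from Theorem~\ref{t:3.3} and the discussion in section~\ref{sec:3.4}, particularly equation~(\ref{3.4.1}).'' You have accurately unpacked what this one-line proof means: the $W\times A_G(s,e)$-structure of the standard module comes from (\ref{3.4.1}), the dichotomy (a)/(b) is exactly the content of the generalized Springer correspondence as described in section~\ref{sec:3.4}, and Theorem~\ref{t:3.3} supplies the orbit-closure constraint on the other composition factors.
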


\begin{proof}
It follows directly from theorem \ref{t:3.3} and the discussion in
section \ref{sec:3.4}, particularly, equation (\ref{3.4.1}).
\end{proof}

\subsection{}\label{sec:3.6} If $s\in\fg$ is semisimple, let $s=s_{hyp}+s_{ell}$ denote its
decomposition into hyperbolic and elliptic parts.

\begin{theorem}[\cite{L4}] A simple $\bH$-module $\overline
  X(s,r_0,e,\psi)$ is tempered if and only if $\{s_{hyp},e\}$ can be
  embedded into a Lie triple of $\CO_e.$ In this case $\overline
  X(s,r_0,e,\psi)=X(s,r_0,e,\psi).$ 

If in addition, $\CO_e$ is a
  distinguished nilpotent orbit, then $
  X(s,r_0,e,\psi)$ is a discrete series.
\end{theorem}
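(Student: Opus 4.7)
My plan is to verify Casselman's criterion (section \ref{sec:1.4}) directly on the geometric realization of $X(s,r_0,e,\psi)$ from section \ref{sec:3.3}, by using equivariant localization to pin down the $\bA$-weights that actually occur. Since the center of $\bH$ acts on $X(s,r_0,e,\psi)$ through the character $(s,r_0)$, every generalized $\bA$-weight is $W$-conjugate to $s$; writing $s=s_{hyp}+s_{ell}$, the real part of each weight lies in $W\cdot s_{hyp}$, so temperedness reduces to the statement that every fundamental weight $\omega_i$ is non-positive on every $W$-translate of $s_{hyp}$ that actually appears in the module.

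To identify which translates appear, I would apply the Atiyah--Bott--Segal localization theorem to the subtorus $S\subset Z^0_{G\times\bC^*}(e)$ generated by the hyperbolic part of $(s,r_0)$. The specialization defining $X(s,r_0,e)$ is controlled by the restriction of $\dot{\C L}$ to the fixed locus $\C P_e^S$, whose points are cosets $gP$ with $Ad(g^{-1})s_{hyp}\in\fk{l}$ and $Ad(g^{-1})e\in\C C+\fk{n}$; each such fixed point determines, via the torus $\fh\subset\fk{l}$ of section 3.1, a $W$-translate of $s_{hyp}$. The Casselman inequality at every such translate, combined with $[s_{hyp},e]=2r_0 e$, then translates (via Jacobson--Morozov and the Bala--Carter theory of standard triples) into the existence of a Lie triple $\{e,h,f\}\subset\CO_e$ with $s_{hyp}=r_0 h$, i.e., the embeddability of $\{s_{hyp},e\}$ into a Lie triple of $\CO_e$.

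For the two remaining claims: when the embeddability holds, a Jacobson--Morozov dimension count shows that $\CO_e$ is the unique open $Z_G(s)$-orbit in $\{x\in\fg:[s,x]=2r_0 x\}$, so by theorem \ref{t:3.3} every composition factor of $X(s,r_0,e,\psi)$ is parameterized by the same orbit $\CO_e$; combining this with the $A_G(s,e)^0$-isotypic decomposition of $X(s,r_0,e)$ yields $\overline X(s,r_0,e,\psi)=X(s,r_0,e,\psi)$. When moreover $\CO_e$ is distinguished, $\fz_\fg(\{e,h,f\})$ contains no nonzero semisimple element, forcing $h$ to have strictly negative pairing with every fundamental weight; the Casselman inequalities then become strict and $X(s,r_0,e,\psi)$ is a discrete series. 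The main obstacle will be the localization step: ensuring that the specialization is really controlled by $\C P_e^S$ compatibly with $\dot{\C L}$ and with the $A_G(s,e)$-action that carves out the $\psi$-isotypic component. This requires the full framework of \cite{L2}, particularly its construction of the $W$-action on equivariant homology, together with the classification of cuspidal local systems needed to guarantee that only the intended fixed-point components contribute to the specialization.
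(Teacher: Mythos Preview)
The paper does not prove this theorem; it is quoted from \cite{L4} without argument and used as input for the remainder of section~\ref{sec:3.6}. There is no proof in the paper to compare against.

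Your outline follows the strategy Lusztig uses in \cite{L4}: compute the $\bA$-weights via equivariant localization on $\C P_e$ and translate the Casselman inequalities into a Lie-theoretic condition on $(s_{hyp},e)$. Two of your auxiliary steps are correct: the open-orbit argument for $X=\overline X$ is exactly the mechanism recorded at the end of section~\ref{sec:3.3}, and the observation that distinguishedness of $\CO_e$ forces $s_{ell}=0$ (since $s_{ell}$ is then a semisimple element of $\fz_\fg(\{e,h,f\})$) is right.

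The gap is in your discrete-series step. That $\fz_\fg(\{e,h,f\})$ has no nonzero semisimple element does not by itself ``force $h$ to have strictly negative pairing with every fundamental weight''. The fundamental weights $\omega_i$ live in $\fh^*$ with $\fh=\operatorname{Lie}(Z(L))$, and the strict Casselman inequalities concern the actual $\bA$-weights of the module, which are only certain $W$-translates of $r_0h$, determined by which fixed-point components contribute. Strictness comes from showing that no such weight lands on a wall $\omega_i=0$; in \cite{L4} this amounts to checking that $e$ lies in no proper Levi $\fk m\supset\fk l$. Distinguishedness in $\fg$ certainly implies this, but the implication passes through the fixed-point analysis, not through a direct pairing computation with $h$. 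You correctly flag the localization step as the main obstacle; in \cite{L4} it is indeed the bulk of the argument.
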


Note that, if $(s,e,\phi)$ is the geometric parameter of a real
($s_{ell}=0$) tempered simple $\bH$-module, $A_G(s,e)=A_G(e),$ and
$X(s,r_0,e,\phi)$ has a unique lowest $W$-type, namely
$\mu(\CO_e,\phi)$. In addition,  $\mu(\CO_e,\phi)$ has
multiplicity one. We need one more fact, in the notation before (\ref{3.4.3}): 
\begin{equation}
 H_c^{dim(\CO_e)-dim(\C C')}(\CO_e\cap(\C C'+\fk
 n'),\varepsilon_\phi)\neq 0 \text{ if and only if }H_c^{*}(\CO_e\cap(\C C'+\fk
 n'),\varepsilon_\phi)\neq 0.
\end{equation} 
For Borel-Moore homology and the classical Springer correspondence,
this fact was proved first in \cite{Sh1} (theorem 2.5) for
classical groups and \cite{Sh2} for $F_4$,
\cite{BS} (section 2) for $E_6,E_7,E_8$, and for $G_2$ it follows from
\cite{Spr1} (7.16). In the generality needed here, it is proved in
\cite{L7}, theorem 24.4.(d). The proof is a consequence of the
algorithm for computing Green functions.

Therefore, we have a bijection between $\widehat W$ and simple tempered
$\bH$-modules with real central character, when $\bH$ is of geometric
type. Combining this with proposition \ref{p:3.5}, we obtain the following result.

\begin{corollary}\label{c:ind}
Assume $\bH$ is of geometric type. The set of simple tempered $\bH$-modules with real central character
is linearly independent in the Grothendieck group of $W$. 
\end{corollary}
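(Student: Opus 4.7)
The plan is to exploit the triangular structure of $W$-characters of tempered modules with respect to the closure order on nilpotent orbits, which is the content of Proposition \ref{p:3.5}.

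First, I would record the bijection that the preceding discussion (together with \cite{L4}) gives between $\widehat{W}$ and simple tempered $\bH$-modules with real central character: such a module has the form $\overline{X}(s,r_0,e,\phi)=X(s,r_0,e,\phi)$ where $\{s,e\}$ lies in a Jacobson-Morozov triple of $\CO_e$, so $A_G(s,e)=A_G(e)$ and $\phi\in \widehat{A_G(e)^0}$; the generalized Springer correspondence $\Phi_{\C C,\C L}$ then sends $(\CO_e,\phi)\in \C M_{\C C,\C L}$ to an irreducible $\mu(\CO_e,\phi)\in \widehat{W}$. I will write $X_{(\CO,\phi)}$ for the corresponding tempered module.

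Second, I would unpack Proposition \ref{p:3.5} into a triangularity statement. Because $\phi$ is irreducible and $A_G(s,e)=A_G(e)$, the multiplicity $[\phi|_{A_G(s,e)}:\phi]$ equals $1$, so $\mu(\CO,\phi)$ occurs in $X_{(\CO,\phi)}|_W$ with multiplicity exactly one. Any other $W$-type $\mu(\CO',\phi')$ appearing in $X_{(\CO,\phi)}|_W$ must satisfy $\CO\subsetneq \overline{\CO'}$, hence $\CO'$ is strictly larger than $\CO$ in the closure order. Equivalently, the multiplicity $[\mu(\CO_0,\phi_0):X_{(\CO_1,\phi_1)}|_W]$ vanishes whenever $(\CO_1,\phi_1)\neq (\CO_0,\phi_0)$ and $\CO_1\not<\CO_0$, and equals $1$ when $(\CO_1,\phi_1)=(\CO_0,\phi_0)$.

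Finally, I would conclude with a minimal-element argument. Suppose $\sum_{(\CO,\phi)} a_{(\CO,\phi)}\, X_{(\CO,\phi)}|_W=0$ in the Grothendieck group of $W$, with some coefficient nonzero. Pick $(\CO_0,\phi_0)$ such that $a_{(\CO_0,\phi_0)}\neq 0$ and $\CO_0$ is minimal in the closure order among orbits indexing nonzero coefficients. Extracting the multiplicity of $\mu(\CO_0,\phi_0)$, only $X_{(\CO_0,\phi_0)}$ contributes: terms with $\CO_1\neq \CO_0$ would require $\CO_1<\CO_0$, ruled out by minimality, while terms with $\CO_1=\CO_0$ but $\phi_1\neq \phi_0$ only contain $W$-types $\mu(\CO',\phi')$ with $\CO'>\CO_0$, so $\mu(\CO_0,\phi_0)$ does not appear there. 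This forces $a_{(\CO_0,\phi_0)}=0$, a contradiction.

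The argument is essentially formal once Proposition \ref{p:3.5} and the bijection $\widehat W \leftrightarrow \{\text{real tempered}\}$ are in hand; the only point requiring care is that the closure order on nilpotent orbits is merely a partial order, which is why one selects a minimal $\CO_0$ rather than inducting along a total extension. The main conceptual input—that the partition of tempered $W$-characters via lowest $W$-types is governed by this closure order—has already been absorbed into Proposition \ref{p:3.5} through the generalized Springer correspondence.
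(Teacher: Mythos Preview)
Your proposal is correct and follows essentially the same approach as the paper: the bijection $\widehat W\leftrightarrow\{\text{simple tempered with real central character}\}$ together with Proposition~\ref{p:3.5} shows that the change-of-basis matrix from irreducible $W$-types to tempered $W$-characters is unitriangular with respect to the closure order on nilpotent orbits, which is exactly your minimal-element argument. The paper records this more tersely (see the Remark following the corollary), but your spelling-out of the case $\CO_1=\CO_0$, $\phi_1\neq\phi_0$ is the same content.
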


\noindent{\bf Remark.} In the Grothendieck group of $W$, one can
consider two sets: one set given by the $W$-characters of tempered
$\bH$-modules with real central character, and the other set, the
natural basis of irreducible $W$-representations. The statements
above say that the first set is also a basis, and that, in some
ordering (coming from the lowest $W$-type map), the change of
bases matrix is uni-triangular.
It is natural to expect that this should hold for arbitrary
parameters Hecke algebras $\bH.$ For $\bH$
of type $G_2$ with arbitrary parameters, this correspondence is part
of \cite{KR}. When $\bH$ is of type $B/C,$ the
cases not covered here are all accounted for by the recent paper
\cite{Ka} and the geometry of the ``exotic nilpotent cone''. Since a
geometric framework exists, this result will most likely follow, but
the ``exotic'' lowest $W$-type correspondence may not be compatible
with the one constructed for Lusztig's parameters. For multi-laced
Hecke algebras with unequal parameters (including $F_4$), the
results of \cite{OS} may also imply this statement.

\section{Hermitian forms}\label{sec:4}

We retain the notation from the previous sections. The graded Hecke
algebra $\bH$ will be assumed geometric and all parameters and central
characters are assumed to be real. We will fix $r_0=1/2$ and drop it
from the notation. (It is sufficient to determine the unitary dual of
$\bH$ for one particular value $r_0\neq 0.$) 

\subsection{} Let $(M,\sigma,\nu)$, $\nu$ real, be a Hermitian Langlands
parameter as in section \ref{p:1.5}. We constructed the intertwining
operator 
\begin{equation}\label{4.1.1}
\CA(M,\sigma,\nu):X(M,\sigma,\nu)\to I(M,\sigma\otimes\bC_{-\nu}),\  x\otimes
(v\otimes 1_\nu)\mapsto xr_{w}\otimes (a_w(v)\otimes 1_{-\nu}),
\end{equation}
where $w$ is such that $wM=M,$ $a_w:w\sigma\overset{\cong}\longrightarrow
\sigma$, and $w\nu=-\nu.$ 

By the results in the previous section, $X(M,\sigma,\nu)$ contains a
special set of $W$-representations, the lowest $W$-types. It is an
empirical fact that every $L(M,\sigma,\nu)$ contains one lowest $W$-type
$\mu_0$ with multiplicity $1$. (This is verified case by case for
the exceptional Hecke algebra, for those of classical types this is
automatic from the fact that the component groups $A_G(e)$ of
nilpotent elements $e$ are always abelian.) 

As a $\Bbb C[W]$-module, 
\begin{equation}\label{2.5.2}
I(M,\sigma\otimes\bC_\nu)\mid_W=\Bbb C[W]\otimes_{\Bbb C[W(M)]} (\sigma\mid_W).
\end{equation}
 For any $W$-type $(\mu,V_\mu)$, $\CA(M,\sigma,\nu)$ induces an operator
\begin{equation}\label{2.5.3}
r_\mu(M,\sigma,\nu):\ \text {Hom}_{W}(\mu,\Bbb
C[W]\otimes_{\Bbb C[W(M)]} \sigma)\to\text {Hom}_{W}(\mu,\Bbb
C[W]\otimes_{\Bbb C[W(M)]} \sigma).
\end{equation}
By Frobenius reciprocity, 
\begin{equation}\label{2.5.4}
\text {Hom}_{W}(\mu,\Bbb C[W]\otimes_{\Bbb C[W(M)]} \sigma)\cong \text
      {Hom}_{W(M)}(\mu, \sigma).
\end{equation}
In conclusion, $\CA(M,\sigma,\nu)$ gives rise to operators
\begin{equation}\label{2.5.5}
r_\mu(M,\sigma,\nu):\ \text {Hom}_{\Bbb C[W(M)]}(\mu, \sigma)\to \text
{Hom}_{\Bbb C[W(M)]}(\mu, \sigma).
\end{equation}

The operator $r_{\mu_0}(M,\sigma,\nu)$ is a scalar, and we normalize
the intertwining operator $\CA(M,\sigma,\nu)$ so that this scalar is $1.$

Recall the map $\ep:\bH\to\bA$ (section \ref{p:1.5}). We denote by $\ep(x)(\nu)$, the
evaluation of an element $\ep(x)\in\bA=S(\fh^*)$ at $\nu\in \fh.$  

\begin{theorem}[\cite{BM3}]\label{Hermitian} Let $(M,\sigma,\nu)$ be a Hermitian
  Langlands parameter. 
\begin{enumerate}
 
\item The map $\CA(M,\sigma,\nu)$ is an intertwining operator. 

\item The image of the operator $\CA(M,\sigma,\nu)$ is  $L(M,\sigma,\nu)$ and
  the Hermitian form on $L(M,\sigma,\nu)$ is
  given by: 
\begin{align}\notag
\langle t_x\otimes (v_x\otimes 1_\nu),t_y\otimes (v_y\otimes
  1_\nu)\rangle&=\langle t_x\otimes (v_x\otimes 1_\nu), t_yr_{w_m}\otimes
  (a_m(v_y)\otimes 1_{-\nu})\rangle_h,\\\notag
&=\langle\ep(t_y^*t_xr_{w_m})(\nu)a_m(v_x),v_y\rangle_M.
\end{align}

\end{enumerate}
\end{theorem}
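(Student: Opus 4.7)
The plan is to split the theorem into the intertwining assertion of part (1) and the image/Hermitian form assertion of part (2). The whole argument rests on two pre-existing ingredients: the Bernstein--Lusztig intertwining elements $r_w$ from (\ref{1.5.1}), whose independence from the chosen reduced expression of $w$ is the content of \cite{BM3}, Lemma 1.6 (via \cite{L1}, Proposition 5.2), and the abstract Hermitian pairing formula between $I(M,U)$ and $I(M,U^h)$ stated in the lemma of section \ref{sec:1.8}. Throughout, I would use the identities imposed on the Langlands datum by proposition \ref{p:1.5}, namely $wM=M$, $w\sigma\cong\sigma$ via $a_w$, and $w\nu=-\overline\nu$.

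For part (1), I would check the intertwining identity on the two natural sets of generators of $\bH$: the Coxeter elements $t_{s}$ and the polynomial generators $\om\in\fh^{*}$. For $\om$, the key computation is a commutation identity of the form $\om\, r_{w}=r_{w}\,(w^{-1}\om)+R$, with $R$ a correction term that, on the induced module, is absorbed by the $\bH_{M}$-action. Upon evaluating at the character $1_{-\nu}$ and using $w\nu=-\nu$, one has $\langle w^{-1}\om,-\nu\rangle=\langle\om,\nu\rangle$, so the action of $\om$ on the image matches the action on $X(M,\sigma,\nu)$. For $t_{s}$, reducing to a simple reflection, the defining relation $r_{s_\al}=t_{s_\al}\al-c_\al$ together with the quadratic relation $t_{s_\al}^{2}=1$ and the isomorphism $a_{w}\colon w\sigma\to\sigma$ yields the required intertwining. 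Well-definedness of the map on the tensor product $\bH\otimes_{\bH_{M}}(\sigma\otimes\bC_\nu)$ is then automatic from the fact that $r_{w}$ is independent of the reduced expression and commutes with $\bH_{M}$ up to elements whose action on $\sigma\otimes\bC_\nu$ reproduces the $\bH_{M}$-structure after twisting by $w$.

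For part (2), the image of $\CA(M,\sigma,\nu)$ is nonzero because $1\otimes(v\otimes 1_{\nu})$ maps to $r_{w}\otimes(a_{w}(v)\otimes 1_{-\nu})$, which is nonzero in $I(M,\sigma\otimes\bC_{-\nu})$. By part (1) this image is an $\bH$-submodule; by proposition \ref{p:1.5} the Langlands quotient $L(M,\sigma,\nu)$ is simultaneously the unique irreducible quotient of $X(M,\sigma,\nu)$ and the unique irreducible submodule of $I(M,\sigma\otimes\bC_{-\nu})$, so the image is forced to be $L(M,\sigma,\nu)$. The Hermitian form on $L(M,\sigma,\nu)$ is then obtained by transporting the pairing of the lemma in section \ref{sec:1.8} through $\CA$: setting $\langle v_{1},v_{2}\rangle:=\langle v_{1},\CA(v_{2})\rangle_{h}$ and substituting the explicit formula (\ref{4.1.1}) together with the $\ep_{M}$-description of the pairing produces the stated expression $\langle\ep(t_{y}^{*}t_{x}r_{w_{m}})(\nu)a_{m}(v_{x}),v_{y}\rangle_{M}$. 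The main obstacle in this program is the polynomial intertwining identity $\om\, r_{w}\equiv r_{w}\,(w^{-1}\om)$ modulo terms killed on the induced module; once this is established (and it is essentially encoded in \cite{L1}, Proposition 5.2), everything else is either a structural identification via proposition \ref{p:1.5} or the direct unwinding of definitions.
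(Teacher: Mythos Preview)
The paper does not prove this theorem: it is quoted verbatim from \cite{BM3} (section 1.6 there) and no argument is supplied in the present text. So there is no ``paper's own proof'' to compare your sketch against.

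That said, two remarks on your outline. In part~(1) you spend most of the effort checking the intertwining identity on generators $t_s$ and $\om$, but the $\bH$-equivariance of the map $x\otimes(v\otimes 1_\nu)\mapsto xr_w\otimes(a_w(v)\otimes 1_{-\nu})$ is automatic from its form (left multiplication by $y\in\bH$ sends $x$ to $yx$ on both sides). The actual content is \emph{well-definedness} over $\bH_M$, which requires exactly the commutation $h\,r_w=r_w\cdot(w^{-1}\!\cdot h)$ for $h\in\bH_M$ that you mention only at the end; this is what \cite{BM3}, Lemma~1.6 and \cite{L1}, Proposition~5.2 establish. Your ingredients are right, but the emphasis is inverted.

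In part~(2), the argument ``the image is a nonzero submodule of $I(M,\sigma\otimes\bC_{-\nu})$, which has $L$ as unique irreducible submodule, hence image $=L$'' only gives $L\subseteq\mathrm{Im}\,\CA$; it does not by itself force equality. The identification $\mathrm{Im}\,\CA=L(M,\sigma,\nu)$ in \cite{BM3} uses more, essentially that the pairing $\langle\,\cdot\,,\CA(\cdot)\rangle_h$ descends to a nondegenerate form on $X/\ker\CA$, which then forces $X/\ker\CA$ to be the Hermitian (hence irreducible) quotient. Your final paragraph is the correct mechanism for producing the form, but you should invoke it to pin down the image rather than treating the image identification as settled beforehand.
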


\noindent The discussion in this section can be summarized in the following
corollary.

\begin{corollary}
A Langlands parameter $(M,\sigma,\nu)$, $\nu$ real, is unitary if and only if the
following two conditions are satisfied:
\begin{enumerate}
\item $w_mM=M$, $w_mV\cong V,$ $w_m\nu=-\nu$;
\item the normalized operators $r_\mu(M,\sigma,\nu)$ are positive
  semidefinite  for all $\mu\in\widehat W,$ such that
  $\Hom_{W(M)}[\mu:V]\neq 0.$  
\end{enumerate}
\end{corollary}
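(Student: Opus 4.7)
The plan is to reduce the corollary to two separate statements: condition (1) captures exactly when $L(M,\sigma,\nu)$ is Hermitian, and condition (2) then captures the positivity of that Hermitian form. First, I would invoke Proposition \ref{p:1.5}: since $\nu$ is real by hypothesis, the condition $w\nu=-\overline\nu$ becomes $w\nu=-\nu$, so $L(M,\sigma,\nu)$ admits a Hermitian form if and only if there exists $w\in W$ with $wM=M$, $w\sigma\cong\sigma$, $w\nu=-\nu$; one then checks (as in \cite{BM3}) that $w$ can be taken to be $w_m$, the distinguished shortest representative introduced before equation (\ref{1.5.2}). Thus condition (1) is precisely the necessary condition for $L(M,\sigma,\nu)$ to carry any invariant Hermitian form, and from here on we may assume (1) holds.

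Next, I would use Theorem \ref{Hermitian}(2), which realizes the form on $L(M,\sigma,\nu)$ as the pullback of the natural pairing between $X(M,\sigma,\nu)$ and $I(M,\sigma\otimes\bC_{-\nu})$ via the intertwining operator $\CA(M,\sigma,\nu)$. Since $\CA(M,\sigma,\nu)$ commutes with the $W$-action, the form is $W$-invariant, hence the $\mu$-isotypic components of $L(M,\sigma,\nu)$ are mutually orthogonal, and unitarity is equivalent to positive semidefiniteness of the form restricted to each $\mu$-isotypic component separately. Via Frobenius reciprocity (\ref{2.5.4}), this restricted form may be identified with a pairing on $\Hom_{W(M)}(\mu,\sigma)$.

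The key computation is to identify that pairing. Since $\sigma$ is tempered, section \ref{sec:1.7} gives a positive definite $\bH_{M_0}$-invariant Hermitian form on $\sigma$; restricting to $\bC[W(M)]\subset\bH_{M_0}$, this induces a positive definite form on $\Hom_{W(M)}(\mu,\sigma)$. Using the formula in Theorem \ref{Hermitian}(2), the $\mu$-component of the form on $L(M,\sigma,\nu)$ is obtained by composing this positive definite form with the operator $r_\mu(M,\sigma,\nu)$ built from the action of $r_{w_m}$ and $a_{w_m}$ as in (\ref{2.5.3})--(\ref{2.5.5}). Hence positivity on the $\mu$-component is equivalent to $r_\mu(M,\sigma,\nu)$ being positive semidefinite. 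The normalization $r_{\mu_0}(M,\sigma,\nu)=1$ on the distinguished lowest $W$-type $\mu_0$ (which exists with multiplicity one by the empirical fact quoted before (\ref{2.5.2})) ensures the form is nonzero and fixes the overall sign.

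The main obstacle in writing this out carefully is the last paragraph: tracking through the Frobenius reciprocity identifications to confirm that the form on the $\mu$-isotypic component decomposes as \emph{(positive definite form from temperedness of $\sigma$)} composed with $r_\mu(M,\sigma,\nu)$, rather than some twisted version, and that this identification is compatible with the normalization at $\mu_0$. Restricting only to $W$-types $\mu$ with $\Hom_{W(M)}[\mu:V]\ne 0$ is automatic from (\ref{2.5.2}), since other $\mu$'s do not appear in $L(M,\sigma,\nu)|_W$ at all.
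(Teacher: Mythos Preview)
Your proposal is correct and follows essentially the same approach as the paper. In fact, the paper does not give an explicit proof of this corollary at all: it is presented with the sentence ``The discussion in this section can be summarized in the following corollary,'' and your argument is precisely a careful unpacking of that preceding discussion (Proposition~\ref{p:1.5} for Hermiticity, Theorem~\ref{Hermitian} for the form, the decomposition into $W$-isotypic components, and the normalization at $\mu_0$).
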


One of the main tools for showing
$\bH$-modules are {\it not} unitary is to compute the signature of
certain $r_\mu(M,\sigma,\nu).$

\subsection{}\label{sec:4.2} We consider now the particular case of spherical
modules. 

\begin{definition}\label{d:4.2}
An $\bH$-module $V$ is called {\it spherical} if $\Hom_W[V,triv]\neq
0.$ It is called {\it generic} if $\Hom_W[V,sgn]\neq 0.$ 
\end{definition}

In the case of Hecke algebra with equal parameters, these definitions
are motivated by the correspondence with the representations of split
$p$-adic groups with Iwahori fixed vectors (\cite{BM4,Re2}). In that case, generic
$\bH$-modules correspond to 
representations of the $p$-adic group admitting Whittaker models, and
spherical $\bH$-modules to representations having fixed vectors under
a hyperspecial maximal compact open subgroup. We
emphasize that for a split adjoint $p$-adic group, the unipotent
representations admitting Whittaker models have necessarily Iwahori
fixed vectors (\cite{Re2}).

The Iwahori-Matsumoto involution, $IM$, defined by
\begin{equation}
  \label{eq:4.2.1}
  \begin{aligned}
    &IM(t_w):=(-1)^{l(w)}t_w,\\
    &IM(\om):=-\om,\quad \om\in\fk h^*,
  \end{aligned}
\end{equation}
interchanges spherical and generic modules. From (\ref{1.3.1}), it
follows that $IM$ commutes with the $*$-operation on $\bH.$ From
(\ref{1.3.2}), it follows immediately then that $IM$ preserves
Hermitian and unitary modules.

Let $X(\nu)=\bH\otimes_\bA\bC_\nu$, $\nu\in \fh_\bR,$ be a principal
series module. Clearly, $X(\nu)\cong \bC[W],$ as $W$-modules, and in
particular, $X(\nu)$ has a unique subquotient containing the $triv$
representation of $W$. Denote this by $L(\nu).$ Let $\fh_\bR^+$ denote
the dominant parameters in $\fh_\bR.$ The Langlands
classification and the considerations about intertwining operators
presented in the previous sections are summarized as follows.

\begin{proposition}\label{p:4.2} Assume $\nu\in \fh_\bR^+.$ 

\noindent (1) The principal series $X(\nu)$ 
has a unique irreducible quotient, $L(\nu)$, which is
spherical. Moreover, every spherical $\bH$-module appears in this way.

\noindent (2) A spherical module $L(\nu)$ is Hermitian if and only if
$w_0\nu=-\nu.$ 

\noindent (3) If $w_0\nu=-\nu$, the image of the intertwining operator 
$$A(\nu):X(\nu)\to X(-\nu),\ A(\nu)(x\otimes 1_\nu)=xr_{w_0}\otimes 1_{-\nu},$$
normalized so that it is $+1$ on the spherical vector, is $L(\nu).$ 
\end{proposition}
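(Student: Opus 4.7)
The plan is to specialize the general Langlands classification (Theorem \ref{t:1.4}) and the intertwining operator construction from Section \ref{sec:1.8} to the trivial Langlands parameter $M=\emptyset$, $\sigma=\bC_0$ (the unique module of $\bH_{M_0}=\bC$), with dominant $\nu\in\fh_\bR^+$, and then match the normalization convention of Section \ref{sec:4} to the ``$+1$ on the spherical vector'' condition.

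For (1), note that when $\Pi_M=\emptyset$, $\fk t=\fh$, the chamber condition $\nu\in\fk t^+$ coincides with $\nu\in\fh_\bR^+$, and $X(\emptyset,\bC_0,\nu)=\bH\otimes_\bA\bC_\nu=X(\nu)$. Theorem \ref{t:1.4} then gives directly the unique irreducible quotient $L(\nu)$. To see $L(\nu)$ is spherical, use $X(\nu)|_W\cong\bC[W]$, so $triv$ has multiplicity one as a $W$-type in $X(\nu)$. By Section \ref{sec:3.4}, $triv=\mu(\CO_{min},\phi_{min})$ is the lowest $W$-type attached to the minimal orbit $\CO_{min}=G\cdot\C C$, and Proposition \ref{p:3.5} identifies the unique composition factor containing any lowest $W$-type as the Langlands quotient. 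Conversely, if $V$ is an irreducible spherical $\bH$-module with real central character, then $triv$ is a lowest $W$-type of $V$ by Proposition \ref{p:3.5}; via the generalized Springer correspondence this $W$-type corresponds to $(\CO_{min},\phi_{min})$, forcing the Langlands data $(M',\sigma',\nu')$ of $V$ to satisfy $M'=\emptyset$, $\sigma'=\bC_0$, and hence $V=L(\nu')$ for some $\nu'\in\fh_\bR^+$.

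For (2), apply Proposition \ref{p:1.5} to the parameter $(\emptyset,\bC_0,\nu)$. The conditions $wM=M$ and $w\sigma\cong\sigma$ are automatic for any $w\in W$, so Hermiticity reduces to the existence of $w\in W$ with $w\nu=-\overline\nu=-\nu$. Because $\nu$ is dominant and real, $-\nu$ is antidominant, and up to the stabilizer $W_\nu$ the only element of $W$ carrying $\nu$ to $-\nu$ is $w_0$; equivalently, such a $w$ exists if and only if $w_0\nu=-\nu$.

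For (3), specialize the formula (\ref{1.5.2}) to our data. With $M=\emptyset$, $\sigma=\bC_0$, and $w_m=w_0$ satisfying $w_0\nu=-\nu$, the isomorphism $a_{w_0}:\bC_0\to\bC_0$ is the identity, and the construction collapses to the stated $A(\nu):x\otimes 1_\nu\mapsto xr_{w_0}\otimes 1_{-\nu}$. Theorem \ref{Hermitian}(2) then identifies the image of $A(\nu)$ with $L(\nu)$. The normalization convention from the discussion preceding the corollary in Section \ref{sec:4}, namely $r_{\mu_0}(M,\sigma,\nu)=1$ for the lowest $W$-type $\mu_0$, is exactly the requirement that $A(\nu)$ act by $+1$ on the spherical vector, since here $\mu_0=triv$ appears with multiplicity one.

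The main obstacle is the converse statement in (1): that every irreducible spherical module $V$ arises as $L(\nu)$ for some dominant $\nu$. This requires invoking the generalized Springer machinery of Section \ref{sec:3.4} together with Proposition \ref{p:3.5} to pin down the geometric parameters of $V$ to the minimal orbit, and then translating back through Lusztig's classification to conclude $M'=\emptyset$. All other points are essentially direct specializations of general results already established in the earlier sections.
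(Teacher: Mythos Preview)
Your proposal is correct and follows exactly the approach the paper intends: the paper offers no proof of Proposition \ref{p:4.2} beyond the sentence ``The Langlands classification and the considerations about intertwining operators presented in the previous sections are summarized as follows,'' and your argument simply spells out that specialization of Theorem \ref{t:1.4}, Proposition \ref{p:1.5}, and equation (\ref{1.5.2}) to the data $(M,\sigma)=(\emptyset,\bC_0)$. The one step that requires a word of care---identifying the Langlands quotient $L(\emptyset,\bC_0,\nu)$ with the geometric module $\overline X(s,r_0,e,\psi)$ for $e\in\CO_{min}$ so that Proposition \ref{p:3.5} applies---is the standard compatibility of the two classifications and is used implicitly throughout the paper; you are right to flag it as the only nontrivial point.
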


In particular, $X(\nu)$ is reducible if and only if
$\langle\al,\nu\rangle=c_\al,$ for some $\al\in R^+.$ For every
$\mu\in \widehat W,$ let 
\begin{equation}
r_\mu(\nu):\mu^*\to\mu^*
\end{equation}
 be the operator defined by (\ref{2.5.5}).  Let
$w_0=s_1\dotsb s_k$ be a reduced decomposition of $w_0$ ($k=|R^+|$),
where $s_j=s_{\al_j},$ $\al_j\in \Pi$. The operator
$r_\mu(\nu)$ has a decomposition 
\begin{equation}
r_\mu(\nu)=r_{\mu,s_1}(s_2\dotsb s_k\nu)\cdot
r_{\mu,s_2}(s_3\dotsb s_k\nu)\dotsb r_{\mu,s_k}(\nu),
\end{equation}
where each $r_{\mu,s_j}(\nu')$ is determined by the equation
\begin{equation}\label{eq:4.2.2}
r_{\mu,s_j}(\nu')=\left\{\begin{matrix} 1, &\text{ on the
    }(+1)\text{-eigenspace of } s_j\text{ on } \mu^*\\
\frac {c_{\al_j}-\nu'}{c_{\al_j}+\nu'},  &\text{ on the
    }(-1)\text{-eigenspace of } s_j\text{ on } \mu^*.\end{matrix}\right.
\end{equation}

\subsection{} As in corollary \ref{Hermitian}, a spherical Hermitian module
$L(\nu)$ is unitary if and only if $r_\mu(\nu)$ are positive
semidefinite for all $\mu\in \widehat W.$ 

\begin{definition}
Define the {\it $0$-complementary series} to be the set $\{\nu\in
\fh_\bR^+: X(\nu)=L(\nu) \text{ unitary}\}.$
\end{definition}

The $0$-complementary series when $\bH$ is of type $B_n/C_n$, with
arbitrary unequal parameters, were determined in \cite{BC}. For type
$G_2$, using the machinery presented in the previous section, in
particular the signatures of $r_\mu(\nu)$, it is an easy
calculation. We record the result, without proof, next. We use the
simple roots
\begin{equation}\notag
\al_1=(\frac 23,-\frac 13,-\frac 13)\text{ and } \al_2=(-1,1,0).
\end{equation}
The Hecke algebra $\bH(G_2)$ has $c_{\al_1}=1$ and
$c_{\al_2}=c>0.$ 

\begin{proposition} Let 
  $\nu=(\nu_1,\nu_1+\nu_2,-2\nu_1-\nu_2),$ $\nu_1\ge 0,$ $\nu_2\ge 0,$
  be a dominant (spherical) parameter for $\bH(G_2)$.  There are
  nine cases depending on the parameter $c$. The $0$-complementary
  series are as follows.

\noindent (1) $0<c<1:$
$\{3\nu_1+2\nu_2<c\}\cup\{3\nu_1+\nu_2>c,\nu_2<c,2\nu_1+\nu_2<1\}.$ 

\noindent (2) $c=1:$
$\{3\nu_1+2\nu_2<1\}\cup\{3\nu_1+\nu_2>1,2\nu_1+\nu_2<1\}.$ 

\noindent (3) $1<c<\frac 32:$
$\{3\nu_1+2\nu_2<c\}\cup\{3\nu_1+\nu_2>c,2\nu_1+\nu_2<1\}.$ 

\noindent (4) $c=\frac 32:$ $\{3\nu_1+2\nu_2<\frac 32\}.$ 

\noindent (5) $\frac 32<c<2:$ $\{3\nu_1+2\nu_2<c, 2\nu_1+\nu_2<1\}.$ 

\noindent (6) $c=2:$ $\{2\nu_1+\nu_2<1\}.$

\noindent (7) $2<c<3:$
$\{2\nu_1+\nu_2<1\}\cup\{\nu_1+\nu_2>1,3\nu_1+2\nu_2<c\}.$ 

\noindent (8) $c=3:$ $\{2\nu_1+\nu_2<1\}\cup\{\nu_1+\nu_2>1,3\nu_1+2\nu_2<3\}.$

\noindent (9) $c>3:$ $\{2\nu_1+\nu_2<1\}\cup\{\nu_1+\nu_2>1,\nu_1<1,
3\nu_1+2\nu_2<c\}.$  

In addition, the only other unitary spherical representations are the
endpoints of the $0$-complementary series, and the trivial
representation, for which $(\nu_1,\nu_2)=(1,c).$ 
\end{proposition}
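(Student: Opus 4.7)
The strategy is the standard one from Corollary 4.1: $L(\nu) = X(\nu)$ is unitary precisely when all normalized operators $r_\mu(\nu): \mu^* \to \mu^*$ are positive semidefinite, for every $\mu \in \widehat{W(G_2)}$. Since $W(G_2)$ is dihedral of order $12$, there are only six such operators to consider, corresponding to the irreducibles $(1,0), (1,3)', (1,3)'', (1,6), (2,1), (2,2)$. Choose a reduced expression $w_0 = s_1 s_2 s_1 s_2 s_1 s_2$ of length $6$ (matching $|R^+|=6$) and use the factorization of $r_\mu(\nu)$ given after Proposition 4.2 as a product of six factors $r_{\mu, s_j}$, one for each positive root, evaluated at the shifted parameters $s_{j+1}\cdots s_6 \nu$.

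The first step is to record, for a dominant parameter $\nu = (\nu_1, \nu_1+\nu_2, -2\nu_1 - \nu_2)$, the pairings $\langle \alpha, \nu\rangle$ for each of the six positive roots $\alpha_1, \alpha_2, \alpha_1+\alpha_2, 2\alpha_1+\alpha_2, 3\alpha_1+\alpha_2, 3\alpha_1+2\alpha_2$, namely $\nu_1, \nu_2, \nu_1+\nu_2, 2\nu_1+\nu_2, 3\nu_1+\nu_2, 3\nu_1+2\nu_2$. By Proposition 4.2 and the discussion after it, reducibility of $X(\nu)$ occurs exactly on the six hyperplanes $\langle\alpha,\nu\rangle = c_\alpha$ (so one of these six linear forms equals $1$ for short roots, or $c$ for long roots). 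The nine subcases in the statement correspond exactly to the values of $c$ at which some of these six reducibility lines collide inside the dominant chamber; the critical values are $c \in \{1, \tfrac{3}{2}, 2, 3\}$.

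The second step, for each of the six $W$-types $\mu$, is to write down $r_\mu(\nu)$ explicitly using the rule (4.2.2): on the $(+1)$-eigenspace of $s_j$ on $\mu^*$ the factor is $1$, and on the $(-1)$-eigenspace it is $(c_{\alpha_j} - \nu')/(c_{\alpha_j} + \nu')$. For the four one-dimensional representations this yields an explicit scalar, a product of at most six linear fractions in $\nu_1, \nu_2$. For the two-dimensional reflection representations $(2,1)$ and $(2,2)$, each factor is a $2\times 2$ matrix (identity on the fixed line of $s_j$, the scalar on its complement), and one must compute the product and extract its two eigenvalues, or equivalently its trace and determinant. The normalization $r_{(1,0)} \equiv 1$ is automatic since the trivial type is fixed by every $s_j$.

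The third step is a case-by-case determination, for each value of $c$ in the nine ranges, of the dominant region where all six signature functions are simultaneously $\ge 0$. One does this by tracking the zeros and poles of each rational function across the reducibility lines and noting where the relevant eigenvalues change sign; the complementary series regions in the statement are the connected components of the nonzero-signature locus that contain a neighborhood of $\nu = 0$, together with components obtained when a pole of some factor is cancelled by a zero of another (this is what produces the second component in cases (1),(2),(3),(7),(8),(9)). The endpoint representations and the trivial representation at $\nu = (1, c)$ (which is the parameter for the trivial $\bH$-module, i.e.\ $L(\rho)$) are picked up at the boundary by continuity. The main bookkeeping obstacle is not any single signature computation but the accounting across the nine parameter ranges of $c$ and ensuring that every candidate region is ruled out by at least one negative eigenvalue on some $(2,1)$ or $(2,2)$ summand; in practice a small set of strategic $W$-types suffices, and the author records the final regions without repeating the (routine but lengthy) verification.
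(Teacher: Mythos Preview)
Your proposal is correct and matches exactly what the paper does: the paper explicitly says this result follows from ``an easy calculation'' using ``the signatures of $r_\mu(\nu)$'' and then records the answer ``without proof.'' Your plan is precisely the routine verification the paper omits --- factor the long intertwining operator over the six positive roots, compute the six $r_\mu(\nu)$ (scalars for the four one-dimensional types, $2\times 2$ matrices for $(2,1)$ and $(2,2)$), and track signs across the reducibility lines $\nu_1,\nu_1+\nu_2,2\nu_1+\nu_2\in\{1\}$ and $\nu_2,3\nu_1+\nu_2,3\nu_1+2\nu_2\in\{c\}$, with the nine cases arising from collisions of these lines at $c\in\{1,\tfrac32,2,3\}$.
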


In this notation, the Hecke algebras of type $G_2$ which appear
geometrically in \cite{L6} are the cases $c=1$ (equal parameters) and
$c=9$. The latter corresponds to a cuspidal local system on the
nilpotent $2A_2$ in $E_6.$ 

\subsection{}\label{sec:4.4} We present the unitary dual with real
central character for the geometric Hecke algebra of type $G_2$ in
more detail. In the notation as above, this is $\bH(G_2,(1,9)),$ but
it is better for our parameterization to think of it as the isomorphic
$\bH(G_2,(3,1)).$ The geometry is attached to the cuspidal local
system on $2A_2$ in $E_6.$ The nilpotent orbits $\CO$ below and their
closure relations are as in \cite{Ca}, and the notation for
$W(G_2)$-representations is as in \cite{A}.

\begin{proposition} The unitary irreducible modules with real central
  character for $\bH(G_2,(3,1))$ are
\begin{small}
\begin{center}
\begin{longtable}{|c|c|c|c|}
\caption{Unitary dual for $\bH(G_2,(3,1))$}
\label{table:G2}\\
\hline
\multicolumn{1}{|c|}{$\mathbf{\CO}$ } &\multicolumn{1}{c|}{\bf Central
  character} &\multicolumn{1}{c|}{\bf LWT} &\multicolumn{1}{c|}{\bf
    Unitary parameters}\\\hline
\endfirsthead
\multicolumn{4}{c}%
{{ \tablename\ \thetable{} -- continued from previous page}}
\\
\endhead
\hline\hline
\endlastfoot

 $2A_2$ &$(\nu_1,\nu_1+\nu_2,-2\nu_1-\nu_2)$ &$1_1$
  &$\{0\le\nu_1,0\le\nu_2,3\nu_1+2\nu_2\le 1\}\cup$\\
 &&&$\{0\le\nu_1,3\nu_1+\nu_2\ge 1, \nu_2\ge 1, 2\nu_1+\nu_2\le 3\}$\\

\hline

 $2A_2+A_1$ &$(-\frac 12+\nu,\frac 12+\nu,-2\nu)$ &$1_3$
&$\{0\le\nu\le \frac 32\}$\\

\hline

 ${A_5}$ &$(3,-\frac 32+\nu,-\frac 32-\nu)$ &$2_2$ &$\{0\le\nu\le
\frac 12\}$\\

\hline

 ${E_6(a_3)}$ &$(1,2,-3)$ &$2_1$
&$D.S.$\\

\hline

 ${E_6(a_1)}$ &$(2,3,-5)$
&$1_4$ &$D.S.$\\

\hline

 ${E_6}$ &$(3,4,-7)$
&$1_2$ &$St$\\

\end{longtable}
\end{center}
\end{small}

\end{proposition}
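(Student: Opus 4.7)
The strategy is to enumerate all simple $\bH(G_2,(3,1))$-modules with real central character via the geometric classification of section \ref{sec:3}, and then decide unitarity orbit by orbit, using different tools on the tempered, spherical, and intermediate strata.

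First, apply the classification (\ref{3.2.5}) to $G$ simply connected of type $E_6$, Levi $L$ of type $2A_2$, and the cuspidal local system $\C L$ on the principal nilpotent of $\fk l$. The generalized Springer injection $\Phi_{\C C,\C L}$ picks out six pairs $(\CO,\phi)$, one per irreducible of $W(G_2)$, and these are precisely the six orbits appearing in the table. For each $\CO$, decompose the semisimple parameter as $s=h/2+\nu$ with $\nu\in\fz(\CO)$ via (\ref{eq:shnu}); the dimension of $\fz(\CO)$ accounts for the number of continuous parameters (two for $2A_2$, one for $2A_2+A_1$ and $A_5$, zero for the distinguished orbits). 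By Proposition \ref{p:3.5} each simple module $\ovl X(s,e,\phi)$ has $\mu(\CO,\phi)$ as its unique lowest $W$-type, matching the LWT column. The three distinguished orbits $E_6(a_3)$, $E_6(a_1)$, $E_6$ yield discrete series by the theorem in section \ref{sec:3.6}, hence are unitary by the proposition in section \ref{sec:1.7} (the $D.S.$ and $St$ rows of the table).

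For the spherical orbit $\CO=2A_2$ (lowest $W$-type $1_1=triv$), apply Proposition \ref{p:4.2}: $L(\nu)$ is hermitian iff $w_0\nu=-\nu$, automatic in $W(G_2)$, and unitary iff every normalized operator $r_\mu(\nu)$ is positive semidefinite. Using the product decomposition of $r_\mu(\nu)$ along a reduced expression for $w_0$ together with the scalar formula (\ref{eq:4.2.2}), compute the signatures on all six $\mu\in\widehat{W(G_2)}$. After rescaling to the equivalent form of the algebra with parameter ratio greater than $3$, the resulting system of inequalities is exactly case $(9)$ of the preceding proposition on $0$-complementary series; adding the closure endpoints and the trivial representation produces the two disjoint unitary regions of row $1$. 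The delicate point is the disconnected second region, where positivity on the two-dimensional $W$-types $2_1$ and $2_2$ is the decisive constraint, and where the hermitian form can have rank one without being trivial.

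For the two one-parameter families $\CO=A_5$ and $\CO=2A_2+A_1$, use the matching-signatures philosophy summarized around equation (\ref{corr}). Realize each simple $V=\ovl X(s,e,\phi)$ as the Langlands subquotient of an induced module $X(M,\sigma,\nu)$ (Theorem \ref{t:1.4}), and compute the signature of $\CA(M,\sigma,\nu)$ on a sufficient collection of $W(G_2)$-types via Theorem \ref{Hermitian}. The signatures on the lowest $W$-type and appropriate satellite types correspond, by construction, to spherical signatures in the smaller algebra $\bH(\fz(\CO),c(\phi))$, which is of rank one (type $A_1$) in both cases, reducing unitarity to an interval problem and predicting the bounds $\nu\le 1/2$ and $\nu\le 3/2$. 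Coupling this with the Iwahori-Matsumoto involution (\ref{eq:4.2.1}), which preserves unitarity and interchanges lowest $W$-types with their sign twists, one further cross-checks unitarity against already-treated cases. The main obstacle, both here and at the boundary of the spherical region, is to certify non-unitarity strictly beyond the listed intervals: this requires exhibiting an explicit indefinite signature on a single well-chosen $\mu$ at each reducibility wall that lies just outside the claimed complementary series, and then confirming positive semidefiniteness at the endpoint by identifying the composition factor of $X(M,\sigma,\nu)$ on which the form descends.
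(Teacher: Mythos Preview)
Your proposal is correct and follows essentially the same approach as the paper, which omits the explicit calculations and says only that the proof is by direct computation of intertwining operators, organized orbit by orbit, analogous to (but easier than) the $F_4$ case in section \ref{sec:5}. The paper confirms your matching of the one-parameter families $2A_2+A_1$ and $A_5$ with the spherical unitary dual of $\bH(A_1)$ at parameters $3$ and $1$ respectively, and the spherical row is indeed governed by the preceding proposition at $c=9$ (case (9)) after passing to the isomorphic presentation $\bH(G_2,(1,9))$.
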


Here $D.S.$ stands for discrete series, and $St$ for the Steinberg
module (which is a discrete series). The proof is by direct computations, similar, but much easier than
those in the next section. We don't include the explicit
calculations. We remark that the unitary pieces attached to the
nilpotent orbits $2A_2+A_1$ and $A_5$ match precisely the spherical
unitary dual for the Hecke algebra of type $A_1$ with parameter $3$,
respectively $1.$ The centralizers of these two orbits are both of
type $A_1,$ so this is consistent with our general philosophy.

To help visualize the answer, we include a picture of the spherical
unitary dual in this case, see figure \ref{fig:G2}.

\begin{figure}[h]\label{fig:G2}
\input{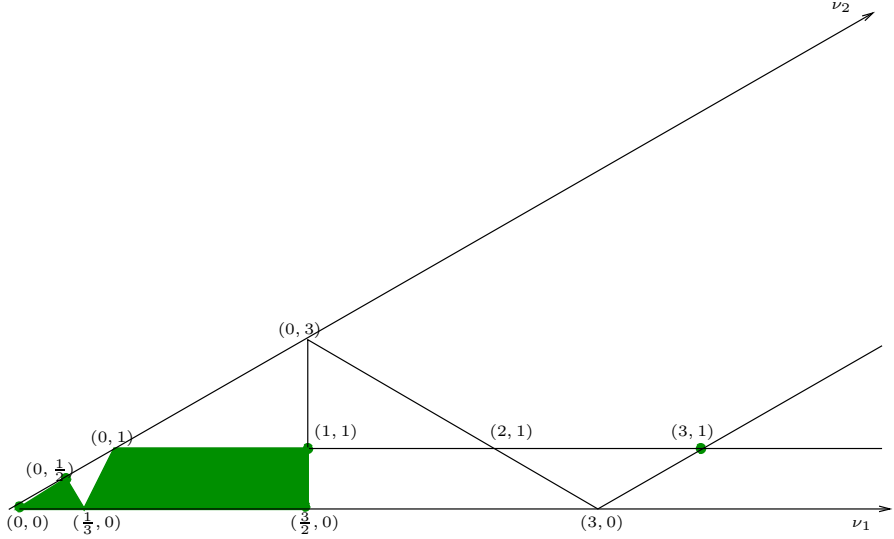}
\caption{Spherical unitary dual for $\bH(G_2,(3,1))$}
\end{figure}

\section{An example: geometric Hecke algebra of type $F_4$}\label{sec:5}

We present the classification of the unitary dual for the Hecke
algebra $\Bbb H$ with unequal parameters of type $F_4$, 
which appears geometrically in the classification of \cite{L6}. It has
labels $1$ 
on the long roots, and $2$ on the short roots. We use the following
choice for the roots:
$$\al_1=\ep_1-\ep_2-\ep_3-\ep_4,\quad \al_2=2\ep_4,\quad
\al_3=\ep_3-\ep_4,\quad \al_4=\ep_2-\ep_3.$$
This algebra is attached to the cuspidal local system on the nilpotent
orbit  $(3A_1)''$ in $E_7.$ The simple modules are parameterized
therefore by a subset of the nilpotent orbits in $E_7$, larger in the
closure ordering than $(3A_1)''.$ We list these nilpotent orbits $\CO$, each
with the corresponding infinitesimal character in $\Bbb H$, 
the $W$-types coming from the generalized Springer correspondence (LWT), and
the centralizer $\fz(\CO)$ in $E_7.$ The generalized Springer
correspondence for this case was computed in \cite{Sp}. The closure
ordering for complex nilpotent orbits in $E_7$ can be found in
\cite{Ca}. (We mention that there is a typographical error in
\cite{Ca}, the nilpotent orbits $D_6(a_2)$ and $D_5(a_1)+A_1$ are
comparable, see figure \ref{3A1nil} and \cite{Sp2}.)

\begin{small}
\begin{center}
\begin{longtable}{|c|c|c|c|}
\caption{Nilpotent orbits for $(3A_1)''$ in $E_7$}
\label{table:nilpotents}\\
\hline
\multicolumn{1}{|c|}{$\mathbf{\CO}$ } &\multicolumn{1}{c|}{\bf Central
  character} &\multicolumn{1}{c|}{\bf $\mathbf{\fz(\CO)}$ }&\multicolumn{1}{c|}{\bf LWT}\\\hline
\endfirsthead
\multicolumn{4}{c}%
{{ \tablename\ \thetable{} -- continued from previous page}}
\\
\endhead
\hline\hline
\endlastfoot

 ${(3A_1)''}$ &$(\nu_1,\nu_2,\nu_3,\nu_4)$ &$F_4$
  &$1_1$\\

\hline

 ${4A_1}$ &$(\nu_1,\nu_2,\nu_3,\frac 12)$ &$C_3$ &$2_3$\\

\hline

 ${A_2+3A_1}$ &$(\frac 12,-\frac 12,-\frac 12,\frac
12)+\nu_1(2,1,1,0)+\nu_2(1,1,0,0)$ &$G_2$ &$1_3$\\

\hline

 ${(A_3+A_1)''}$ &$(0,0,1,-1)+\nu_1(\frac 12,\frac
12,0,0)+\nu_2(\frac 12,-\frac 12,0,0)$ &$B_3$
&$4_2$\\
 &$+\nu_3(0,0,\frac 12,\frac 12)$ &&\\
\hline

 ${A_3+2A_1}$ &$(\nu_1,1+\nu_2,-1+\nu_2,\frac 12)$
&$2A_1$ &$8_3$\\

\hline

 ${D_4(a_1)+A_1}$ &$(\nu_1,\nu_2,\frac 32,\frac 12)$
&$2A_1$ &$9_1,2_1$\\

\hline

 ${A_3+A_2+A_1}$ &$(\frac 12,\frac 12,-\frac 32,\frac
12)+\nu(2,1,1,0)$ &$A_1$ &$4_4$\\

\hline

 ${D_4+A_1}$ &$(\nu_1,\nu_2,\frac 52,\frac 12)$
&$B_2$ &$9_3$\\

\hline

 ${(A_5)''}$ &$(\nu_2+\frac
	 {3\nu_1}2,2+\frac{\nu_1}2,\frac{\nu_1}2,-2+\frac{\nu_1}2)$
	& $G_2$ &$8_1$\\

\hline

 ${D_5(a_1)+A_1}$ &$(\frac 32+\nu,-\frac 32+\nu,\frac
32,\frac 12)$ &$A_1$ &$4_1$\\

\hline

 ${A_5+A_1}$ &$(\frac 14,\frac 74,-\frac 14,-\frac
94)+\nu(\frac 32,\frac 12,\frac 12,\frac 12)$ &$A_1$ &$6_1$\\

 \hline

 ${D_6(a_2)}$ &$(\nu,\frac 52,\frac 32,\frac 12)$
&$A_1$ &$16_1$\\

\hline

 ${E_7(a_5)}$ &$(\frac 52,\frac 32,\frac 12,\frac 12)$
&$1$ &$12_1,6_2$\\

\hline

 ${D_5+A_1}$ &$(2+\nu,-2+\nu,\frac 52,\frac 12)$
&$A_1$ &$2_2$\\

\hline

 ${D_6(a_1)}$ &$(\nu,\frac 72,\frac 32,\frac 12)$
&$A_1$ &$9_2$\\

\hline

 ${E_7(a_4)}$ &$(\frac 72,\frac 32,\frac 12,\frac 12)$
&$1$ &$4_3,8_2$\\

\hline

 ${D_6}$ &$(\nu,\frac 92,\frac 52,\frac 12)$
&$A_1$ &$9_4$\\

\hline

 ${E_7(a_3)}$ &$(\frac 92,\frac 52,\frac 12,\frac 12)$
&$1$ &$8_4,1_2$\\

\hline

 ${E_7(a_2)}$ &$(\frac {11}2,\frac 52,\frac 32,\frac
12)$ &$1$ &$4_5$\\

\hline

 ${E_7(a_1)}$ &$(\frac {13}2,\frac 72,\frac 32,\frac
12)$ &$1$ &$2_4$\\

\hline

 ${E_7}$ &$(\frac {17}2,\frac 92,\frac 52,\frac
12)$ &$1$ &$1_4$

\end{longtable}
\end{center}
\end{small}

\begin{small}
\begin{figure}[h]
\scalebox{.7}{\input{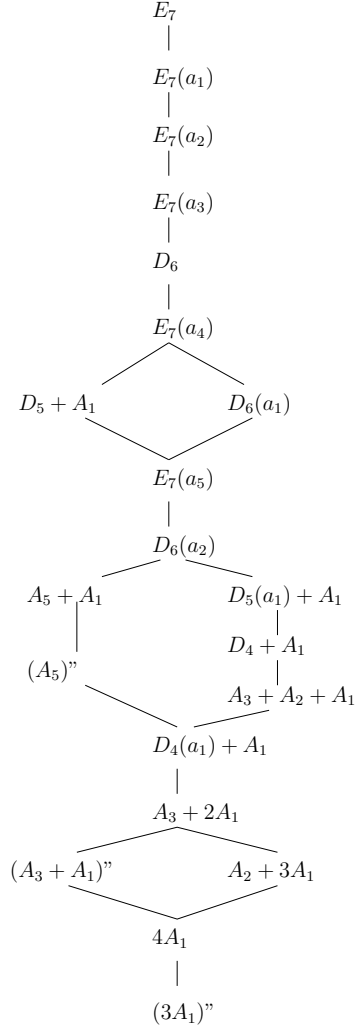}}
\caption{Nilpotent orbits parameterizing the $\bH(F_4,(1,2))$-modules.}\label{3A1nil}
\end{figure}
\end{small}

\subsection{Results}\label{sec:5.1}

The spherical unitary dual is listed in table \ref{table:5.2}. These
are the unitary modules parameterized by $\CO_{min}=(3A_1)''.$ For the rest of
the nilpotents, the unitary sets are presented next. For every
nilpotent $\CO$ in table \ref{table:nilpotents}, with
$\{e,h,f\}\subset \CO$ a
Lie triple, and $\phi\in
\widehat {A_G(e)^0}$, let 
\begin{align}\notag
\CU(\CO,\phi)=\{(\nu,\psi): &\ s=\frac h2+\nu, \phi\in \widehat
{A_G(s,e)^0}, \Hom_{A(s,e)}[\psi:\phi]\neq 0,\\\label{eq:5.1.1}
&\ \overline X(s,e,\psi)
\text{ is unitary} \}
\end{align}
denote the set of unitary parameters associated to $(\CO,\phi).$ The
goal is to relate $\CU(\CO,\phi)$ with the spherical unitary dual $\C
S\C U$
of a Hecke algebra of different type.

\begin{theorem}\label{t:5.1} With the notation as before (particularly
  table \ref{table:nilpotents} and (\ref{eq:5.1.1})), the unitary
  irreducible modules with real central character for $\bH(F_4,(1,2))$ are:

\begin{small}
\begin{center}
\begin{longtable}{|c|c|c|}
\caption{Unitary dual of $\bH(F_4,(1,2))$}
\label{table:unitary}\\
\hline
\multicolumn{1}{|c|}{$\mathbf\CO$} &\multicolumn{1}{c|}{$\mathbf
 {\mu(\CO,\phi)}$} &\multicolumn{1}{c|}{$\CU(\CO,\phi)$}\\
\hline 
\endfirsthead
\multicolumn{3}{c}%
{{ \tablename\ \thetable{} -- continued from previous page}}
\\
\endhead
\hline\hline
\endlastfoot

${E_7}$ &$1_4$ & St\\

\hline
${E_7(a_1)}$ &$2_4$ & D.S.\\

\hline
${E_7(a_2)}$ &$4_5$ & D.S.\\

\hline

${E_7(a_3)}$ &$8_4$ &D.S.\\
${E_7(a_3)}$ &$1_2$ &D.S.\\

\hline

$D_6$ &$9_4$ &$\CS\CU(A_1,(1))$\\

\hline

$E_7(a_4)$ &$4_3$ &D.S.\\
           &$2_2$ &D.S.\\

\hline

$D_6(a_1)$ &$9_2$ &$\CS\CU(A_1,(1))$\\

\hline

$D_5+A_1$ &$8_2$ &$\CS\CU(A_1,(1))$\\

\hline

$E_7(a_5)$ &$12_1$ &D.S.\\
           &$6_2$  &D.S.\\

\hline

$D_6(a_2)$ &$16_1$ &$\CS\CU(A_1,(1))$\\

\hline

$A_5+A_1$ &$6_1$ &$\CS\CU(A_1,(1))$\\

\hline

$D_5(a_1)+A_1$ &$4_1$ &$0\le\nu\le 1$\\

\hline

$(A_5)''$ &$8_1$ &$\CS\CU(G_2,(1,1))$\\

\hline

$D_4+A_1$ &$9_3$     &$\CS\CU(B_2,(1,\frac 32))$\\

\hline

$A_3A_2A_1$ &$4_4$   &$\CS\CU(A_1,(2))$ \\

\hline

$D_4(a_1)+A_1$ &$9_1$  &$\{0\le\nu_2<\nu_1\le 3-\nu_2,\
\nu_1\le\frac 52\}\cup \{0\le\nu_2=\nu_1\le\frac 32\}$  \\

             &$2_1$   & $\{0\le\nu_2<\nu_1\le 3-\nu_2,\
\nu_1\le\frac 52\}\cup \{0\le\nu_2=\nu_1\le\frac 52\}$  \\

\hline

$A_3+2A_1$ &$8_3$     & $\{0\le\nu_1\le\frac 12,\ \nu_1+2\nu_2\le\frac
32\}\ \cup$   \\
&&$\{0\le\nu_1\le\frac 12,\ 0\le\nu_2\le\frac 32,\ 2\nu_2-\nu_1\ge
\frac 32\}$\\

\hline

$(A_3+A_1)''$ &$4_2$   & $\CS\CU(B_3,(1,1))\cup\{(1+\nu,\nu,-1+\nu): 1<\nu<2\}$    \\

\hline

$A_2+3A_1$    &$1_3$   & $\CS\CU(G_2,(2,1))\cup\{\nu_1+\nu_2=1,\
0<\nu_2<1\}\ \cup$    \\
&&$\{\nu_1=1,\ 0\le\nu_2\le 1\}\cup\{3\nu_1+\nu_2=2,\ \frac
12<\nu_2\le 2\}$\\
\hline

$4A_1$       &$2_3$    &$\{\nu_1+\nu_2+\nu_3\le\frac
32\}\cup\{\nu_1+\nu_2-\nu_3\ge \frac 32,\nu_1<\frac 32,\nu_1+\nu_2<2\}$    \\

&&$\cup \{(\nu_1,\nu_2,\frac 32): \nu_1+\nu_2\le 3,\nu_1\le\frac
52\}$\\

&&$\cup\{(\nu_1,1+\nu_2,-1+\nu_2): 0\le\nu_1\le\frac 12,
0\le\nu_2\le\frac 12\}$\\
&&$\cup\{(\nu_1,1+\nu_2,-1+\nu_2):
\nu_1+2\nu_2=\frac 32, \frac 12\le\nu_1\le\frac 32\}$\\

&&$\cup \{(\nu,\frac 32,\frac 12): 0\le\nu\le\frac
52\}\cup\{(1+\nu,-1+\nu,\frac 32): 0\le\nu<\frac 32\}$\\

&&$\cup\{(2+\nu,\nu,-2+\nu): 0\le\nu<\frac 12\}\cup\{(\nu,\frac
72,\frac 32): 0\le\nu<\frac 12\}$\\
&&$\cup\{(\frac 52,\frac 32,\frac 12)\}\cup\{(\frac 72,\frac 32,\frac
12)\}\cup\{(\frac {13}2,\frac 72,\frac 32)\}$\\

\hline

$(3A_1)''$   &$1_1$    & $\CS\CU(F_4,(1,2))$, see table \ref{table:5.2}\\

\hline

\end{longtable}
\end{center}
\end{small}

\end{theorem}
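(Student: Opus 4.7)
The plan is to organize the computation by nilpotent orbit $\CO$, working from the largest orbits (distinguished, giving discrete series) down to the smallest orbit $(3A_1)''$ (the spherical case), and for each $(\CO,\phi)$ in table~\ref{table:nilpotents} carry out a parallel sequence of steps. First, using proposition \ref{p:3.5} and the decomposition $s = h/2 + \nu$ with $\nu \in \fz(\CO)$, enumerate the Langlands parameters $(M,\sigma,\nu)$ of simple modules with lowest $W$-type $\mu(\CO,\phi)$, identify the tempered constituent $\sigma$ on a Bala--Carter Levi $\fk m$, and impose proposition~\ref{p:1.5} to restrict to hermitian parameters. When $\CO$ is distinguished (so $\fz(\CO)=0$ and no $\nu$ remains), the module is a discrete series and unitarity is automatic from section \ref{sec:1.7}; this disposes of $E_7$, $E_7(a_1),\dots,E_7(a_5)$, and the $\nu=0$ points of smaller centralizer orbits.

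For the remaining $(\CO,\phi)$ with $\fz(\CO)\neq 0$, the key step is to realize the philosophy of (\ref{corr}): select a collection of $W(F_4)$-types $\{\mu_0,\mu_1,\dots,\mu_l\}$ (with $\mu_0 = \mu(\CO,\phi)$ the lowest) whose images $\rho(\mu_j)$ are exactly the $W(\fz(\CO))$-representations that detect the spherical unitary dual of the matching algebra $\bH(\fz(\CO),c(\phi))$. On these matched types the signature of the hermitian form on $L(M,\sigma,\nu)$ equals the signature of the hermitian form on the spherical module $\rho(V)$ of $\bH(\fz(\CO),c(\phi))$, so non-unitarity of $\rho(V)$ forces non-unitarity of $V$. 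Concretely this amounts to computing the operators $r_{\mu_j}(M,\sigma,\nu)$ from the reduced-word expression (\ref{eq:4.2.2}) and its Langlands-quotient analogue, following the recipe of theorem~\ref{Hermitian}. The resulting ``necessary'' region is $\CS\CU(\bH(\fz(\CO),c(\phi)))$; for orbits like $D_6$, $D_6(a_1)$, $D_6(a_2)$, $A_5+A_1$, $D_5+A_1$, $D_5(a_1)+A_1$, $A_3+A_2+A_1$, $D_4+A_1$, $(A_5)''$ this will match table~\ref{table:unitary} directly.

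The harder orbits ($A_2+3A_1$, $4A_1$, $D_4(a_1)+A_1$, $A_3+2A_1$, $(A_3+A_1)''$) produce a unitary set \emph{strictly larger} than $\CS\CU(\bH(\fz(\CO),c(\phi)))$, so non-unitarity from the matched types alone is insufficient. For these, I would proceed by continuous deformation from a unitary reference point (e.g.\ the $\nu=0$ tempered module or a known unitary endpoint) through regions where the induced module $X(M,\sigma,\nu)$ stays irreducible, since unitarity changes only across reducibility hyperplanes $\langle\al,\nu\rangle = c_\al$. The extra unitary regions then appear as ``lines'' or ``walls'' where some extra reducibility occurs and a unitary quotient survives; to verify these, I would compute composition series of the relevant standard modules (obtaining also the data in section~\ref{green}), identify which composition factors are tempered or have already been shown unitary, and apply corollary~\ref{c:1.7} to reduce non-real parts. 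Finally, for the spherical case $\CO = (3A_1)''$ I would apply the Iwahori-Matsumoto involution $IM$: if $IM(L(\nu)) \neq L(\nu)$, its unitarity has already been established by the preceding analysis for a larger orbit, and only the case $IM(L(\nu))=L(\nu)$ (i.e.\ irreducible principal series) requires a direct computation of $r_\mu(\nu)$ on a sufficient family of $W(F_4)$-types.

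The main obstacle is the combinatorial complexity: $\widehat{W(F_4)}$ has $25$ classes and the calculation must be repeated for each of the $\approx 20$ pairs $(\CO,\phi)$; more seriously, in the ``extended unitary'' cases one must prove that the matched types $\{\mu_j\}$ are exhausted and that no further non-unitarity obstruction is hidden in some unmatched $\mu$. This requires a judicious choice of the $W$-types used to detect non-unitarity, and uses in an essential way the fact (proved in section~\ref{sec:5.1b}) that the standard modules $X(s,e,\psi)$ for the $(3A_1)''$ cuspidal local system have computable composition series. Consistency checks via $IM$ between spherical and generic pieces, and between different $(\CO,\phi)$ sharing the same central character, will be used throughout to confirm that the enumeration in table~\ref{table:unitary} is exhaustive.
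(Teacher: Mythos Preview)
Your proposal is correct and follows essentially the same route as the paper: distinguished orbits handled as discrete series, maximal-parabolic cases via explicit intertwining operators on all $W$-types, the remaining orbits via the $\sigma$-petite matching with $\CS\CU(\bH(\fz(\CO),c(\phi)))$ supplemented by deformation, composition series, and $IM$-duality, and finally the spherical case via $IM$ plus a direct region-by-region analysis of the $0$-complementary series. One correction: not all of the ``harder'' orbits give a unitary set strictly \emph{larger} than the matched spherical dual---for $D_4(a_1)+A_1$ (lowest $W$-type $9_1$) the paper finds $\CU\subsetneq\CS\CU(\bH(2A_1,(5,5)))$, so there the matched types are insufficient for non-unitarity and one must compute operators on additional (non-petite) $W$-types such as $4_1$ to cut down further; your described toolkit already covers this, but your expectation of the direction of the discrepancy is reversed in that case.
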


We will present, in detail, in the next two sections, the
calculations giving the 
unitary dual of $\bH(F_4,(1,2)).$ The discussion is organized by
nilpotent orbits.

For the distinguished orbits, there is nothing to do. They parameterize
discrete series, and the lowest $W$-types and infinitesimal characters
can be read from tables \ref{table:nilpotents} and \ref{table:unitary}.

\subsection{Maximal parabolic cases}\label{sec:5.1a} First we treat
the cases of those nilpotent orbits which, in the Bala-Carter
classification, correspond to Levi subalgebras of maximal parabolic
subalgebras. In terms of the (classical) Langlands classification,
they correspond to the induced modules from discrete series on Levi
components of maximal parabolic subalgebras. For any Hecke algebra
with arbitrary parameters, the Steinberg module $St$ is a discrete
series. In this case, our calculation is more general, and we record
it for general parameter $c$. The classification of discrete series
depends in general of the particular value of $c$; for the
non-Steinberg discrete series, we restrict to the case $c=2.$

\begin{proposition}The Langlands quotient $\overline X(M,St,\nu)$ is unitary if
  and only if: 

\medskip
\begin{small}
\noindent\begin{tabular}{|c|c|c|c|}
\hline
Levi $M$ &Nilpotent $\CO$ \footnotemark &Central character $\chi$
&Unitary\\
\hline
$C_3$ &$D_6$ &$(\nu,2c+\frac 12,c+\frac 12,\frac 12)$ &$0\le\nu\le
|c-\frac 32|$\\
\hline
$B_3$ &$D_5+A_1$ &$(\frac c2+1+\nu,-\frac c2-1+\nu,c+\frac 12,\frac
12)$ &$0\le\nu\le|\frac {c-3}2|$\\
\hline
$\wti A_2+A_1$ &$A_5+A_1$ &$(\frac 14,c-\frac 14,-\frac 14,-\frac
14-c)+\nu(\frac 32,\frac 12,\frac 12,\frac 12)$
&$0\le\nu\le\min\{\frac 12,|2c-\frac 32|\}$\\
\hline
$A_2+\wti A_1$ &$A_3A_2A_1$ &$(\frac 12,\frac {c-1}2,-\frac
{c+1}2,\frac 12)+\nu(2,1,1,0)$ &$0\le\nu\le\min\{\frac c2,|\frac {3c-2}2|\}$\\
\hline
\end{tabular}
\end{small}
\footnotetext{when $c=2.$}
\medskip

In addition, we have the following complementary series for
   $\overline X(M,\sigma,\nu),$ where $\sigma\neq St$ is a discrete series:

\medskip
\begin{small}
\noindent\begin{tabular}{|c|c|c|c|}
\hline
Levi $M$ &Nilpotent $\CO$ &Central character $\chi$
&Unitary\\
\hline
$C_3$ &$D_6(a_1)$ &$(\nu,\frac 72,\frac 32,\frac 12)$
&$0\le\nu\le\frac 12$\\
      &$D_6(a_2)$ &$(\nu,\frac 52,\frac 32,\frac 12)$
&$0\le\nu\le\frac 12$\\
\hline
$B_3$ &$D_5(a_1)+A_1$ &$(\frac 32+\nu,-\frac 32+\nu,\frac 32,\frac
12)$ &$0\le\nu\le 1$\\
\hline
\end{tabular}
\end{small}
\end{proposition}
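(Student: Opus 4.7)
The plan is to reduce the unitarity question in each row to an explicit evaluation of the normalized intertwining operator $\CA(M,\sigma,\nu)$ on a small, controlled set of $W$-types, via the framework of Theorem \ref{Hermitian} and its corollary. Since $M$ is maximal in every case, the deformation is along a single ray $\nu \in \fk t^+$, so the problem becomes one-parameter: locate the reducibility points of $X(M,\sigma,\nu)$ and determine the sign of the form on the relevant $W$-isotypic components between consecutive reducibility points.

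For the Steinberg rows, I would first record $X(M,St,\nu)|_W \cong \bC[W] \otimes_{\bC[W(M)]} sgn_M$ and decompose it into $W$-irreducibles, which is a finite combinatorial step given the small index of $W(M)$ in $W(F_4)$. For each $W$-type $\mu$ appearing, the operator $r_\mu(M,St,\nu)$ factors as a product over a reduced expression of $w_m$ via (\ref{eq:4.2.2}), producing an explicit rational function of $\nu$ with linear factors of the form $c_\alpha - \langle\alpha,\nu\rangle$. The zeros of these factors are the candidate reducibility points, and a direct inspection matches them with the interval endpoints listed in the Steinberg table (\eg\ $\nu = |c-3/2|$ for $M=C_3$, and $\nu = \min\{1/2,|2c-3/2|\}$ for $M=\wti A_2+A_1$). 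Unitarity on the interval follows by continuity from the tempered point $\nu = 0$ (Section \ref{sec:1.7}); non-unitarity just past each endpoint is exhibited on a single matching $W$-type $\mu_j$ whose signature reverses there. The non-Steinberg rows, where $\sigma$ is a discrete series at $c=2$, are handled by the same procedure: one decomposes $\sigma|_{W(M)}$ using the $W$-structure of discrete series computed in Section \ref{sec:5.1b}, and repeats the intertwining analysis. Here $\fz(\CO)$ is of type $A_1$, so only a single operator governs the complementary series, and the claimed intervals $[0,1/2]$ and $[0,1]$ fall directly out of (\ref{eq:4.2.2}).

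The main obstacle is bookkeeping rather than conceptual. In each row one must identify a $W$-type (or a small set) that both detects the signature change at the predicted endpoint and is not cancelled by contributions from other isotypic components. Since the matching philosophy of \cite{BC1} is a heuristic rather than a theorem in this unequal-parameter setting, each endpoint has to be verified independently, either by locating a genuine reducibility of $X(M,\sigma,\nu)$ or by a direct signature calculation on a matching $\mu_j$. A secondary subtlety in the non-Steinberg rows is that one must first know that the discrete series $\sigma$ exists at $c=2$ with the claimed lowest $W$-type, invoking the classification of \cite{L4} before the intertwining computation begins. As a consistency check, the resulting $\nu$-intervals should agree with the spherical unitary parameter sets of the corresponding $\bH(\fz(\CO),c(\phi))$ appearing in the $\CU(\CO,\phi)$ columns of Theorem \ref{t:5.1}.
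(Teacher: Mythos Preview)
Your proposal is correct and matches the paper's approach: a case-by-case computation of the normalized intertwining operators $r_\mu(M,\sigma,\nu)$ on the $W$-types in $X(M,\sigma,\nu)|_W$, with unitarity on the interval coming from irreducibility plus the tempered endpoint $\nu=0$, and non-unitarity beyond from a sign change on a single $\mu$. The paper carries this out by explicit computer calculation (matrix realizations for all of $\widehat{W(F_4)}$ in Mathematica), computing the operator on \emph{every} $W$-type that appears rather than a minimal set---this simultaneously locates all reducibility points, certifies unitarity, and produces the composition-series decompositions at the endpoints that feed into Section~\ref{sec:5.1b}; so your forward reference to that section for the $W(M)$-structure of the non-Steinberg $\sigma$ should instead be direct input from the rank-$3$ classification, and your citation of (\ref{eq:4.2.2}) should really be to the factorization (\ref{1.5.1}), since for general $(M,\sigma)$ the operators act on $\Hom_{W(M)}(\mu,\sigma)$ as genuine matrices rather than the scalar form of the spherical case.
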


\begin{proof} The proof is case by case. We compute the intertwining
  operators for each standard module restricted to every
  $W$-type. For the necessary calculations, we constructed matrix
  realizations (with rational entries) for all irreducible
  representations of the Weyl group of $F_4$, and used the software
  ``Mathematica'' to multiply matrices in the intertwining operators.

Note that in order to determine the unitarity of the Langlands quotient, we
  would not need {\it all} these intertwining operators, but the
  byproduct of this calculation is that we obtain the $W$-structure of
  various irreducible subquotients which appear in the standard modules
  at reducibility points, in particular (but most importantly) the
  $W$-structure of the discrete series. (This is the Jantzen-type
  filtration from \cite{V1}.)

\medskip

\noindent$\mathbf {D_6:}$ The infinitesimal character is
$(\nu,\frac 92,\frac 52,\frac 12)$, the lowest $W$-type is $9_4,$ and
$\fz=A_1.$ 
The standard module is induced from the Steinberg representation on
the Hecke subalgebra of type $C_3$. The $W$-structure is
$$X(D_6)|_W=9_4+8_4+4_5+2_4+1_4.$$
We calculate the intertwining operators for arbitrary parameter
$c$. In that case, the infinitesimal character is $(\nu,2c+\frac
12,c+\frac 12,\frac 12).$
\begin{small}
\begin{align}\notag
&9_4: 1;\ 
8_4: \frac {c-\frac 32-\nu}{c-\frac 32+\nu};
\ 4_5: \frac {c+\frac 32-\nu}{c+\frac 32+\nu};\ 2_4: \frac {(c-\frac 32-\nu)(3c-\frac 12-\nu)}{(c-\frac 32+\nu)(3c-\frac 12+\nu)};\\
&1_4: \frac {(c+\frac 32-\nu)(3c+\frac 52-\nu)}{(c+\frac 32+\nu)(3c+\frac 52+\nu)}.
\end{align} 
\end{small}
In particular, when $c=2,$ the calculation implies that
$\overline{X}(D_6,\nu)$ is unitary if and only if $0\le\nu\le\frac
12.$

\noindent$\mathbf {D_6(a_1):}$ The infinitesimal character is
$(\nu,\frac 72,\frac 32,\frac 12),$ the lowest $W$-type is $9_2,$ and $\fz=A_1.$ In
terms of Langlands classification, the standard module is induced from
a one-dimensional discrete series in $C_3$. The $W$-structure is
$$X(D_6(a_1))|_W=\text{Ind}_{W(C_3)}^{W(F_4)}(111\times
0)=9_2+4_3+8_4+1_2+2_4.$$ 

The intertwining operators in this case give:
\begin{small}
\begin{align}\notag
&9_2: 1;\
4_3: \frac{\frac 12-\nu}{\frac 12+\nu};\ 
8_4: \frac{\frac 72-\nu}{\frac 72+\nu};\ 
1_2: \frac{(\frac 12-\nu)(\frac 72-\nu)}{(\frac 12+\nu)(\frac 72+\nu)};\\
&2_4: \frac{(\frac 72-\nu)(\frac {13}2-\nu)}{(\frac 72+\nu)(\frac {13}2+\nu)}. 
\end{align}
\end{small}
This indicates that $\overline X(D_6(a_1),\nu)$ is unitary if and only
if $0\le\nu\le \frac 12.$ At $\nu=\frac 12$, the standard module
decomposes as
\begin{align}\notag
X(D_6(a_1),\frac 12)&=\overline X(D_6(a_1),\frac
12)+X(E_7(a_4),\phi_1);\\
\overline X(D_6(a_1),\frac 12)&=9_2+8_4+2_4.\notag
\end{align}

\noindent$\mathbf{D_5+A_1:}$ The infinitesimal character is
$(2+\nu,-2+\nu,\frac 52,\frac 12),$ the lowest $W$-type is $8_2$, and
$\fz=A_1.$  The standard module is induced from the Steinberg representation on
the Hecke subalgebra of type $B_3$. The $W$-structure is
\begin{align}\notag
X(D_5+A_1)|_W=8_2+2_2+9_4+4_5+1_4.\end{align}

We calculate the intertwining operators for arbitrary parameter
$c$. In that case, the infinitesimal character is $(\frac
c2+1+\nu,-\frac c2-1+\nu,c+\frac 12,\frac 12).$ 
\begin{small}
\begin{align}\notag
&8_2: 1;\
2_2: \frac {\frac {c-3}2+\nu}{\frac {c-3}2-\nu};\
9_4: \frac {\frac {3c-1}2-\nu}{\frac {3c-1}2+\nu};\
4_5: \frac {(\frac {3c-1}2-\nu)(\frac {3c+1}2-\nu)}{(\frac {3c-1}2+\nu)(\frac {3c+1}2+\nu)};\\
&1_4: \frac {(\frac {3c-1}2-\nu)(\frac {3c+1}2-\nu)(\frac {5c+3}2-\nu))}{(\frac {3c-1}2+\nu)(\frac {3c+1}2+\nu)(\frac {5c+3}2+\nu)}.
\end{align}
\end{small}
In particular, if $c=2,$ $\overline X(D_5+A_1,\nu)$ is unitary if and
only if $0\le\nu\le \frac 12.$ At $\nu=\frac 12,$ the standard module
decomposes as 
\begin{align}\notag
&X(D_5+A_1,\frac 12)=\overline X(D_5+A_1,\frac 12)+X(E_7(a_4),\phi_2)\\\notag
&\overline X(D_5+A_1,\frac 12)|_W=8_2+9_4+4_5+1_4.\notag
\end{align}

\noindent$\mathbf{D_6(a_2):}$ The infinitesimal character is
$(\nu,\frac 52,\frac 32,\frac 12)$, the lowest $W$-type is $16_1$, and $\fz=A_1$. In
terms of Langlands classification, the standard module is induced from
a four dimensional discrete series on $C_3$. The $W$-structure is
\begin{align}\notag
X(D_6(a_2))|_W&=\text{Ind}_{W(C_3)}^{W(F_4)}(1\times 11+0\times 111)\\ \notag
              &=16_1+2_4+9_2+2\cdot 9_4+2\cdot 8_4+12_1+8_2+2\cdot
              4_5+6_2+1_4.\notag
\end{align}
The intertwining operators are (for the $W$-types with multiplicity
greater than $1$, we only give the determinant):
\begin{small}
\begin{align}\notag
&16_1: 1;\
12_1: \frac {\frac 12-\nu}{\frac 12+\nu};\
6_2: \frac {\frac 12-\nu}{\frac 12+\nu};\
2_4: \frac {(\frac 52-\nu)(\frac 92-\nu)}{(\frac 52+\nu)(\frac
    92+\nu)};\
8_2: \frac {(\frac 12-\nu)(\frac 52-\nu)}{(\frac 12+\nu)(\frac
    52+\nu)};\\\notag
&1_4: \frac {(\frac 12-\nu)(\frac 52-\nu)(\frac 92-\nu)(\frac {11}2-\nu)}{(\frac 12+\nu)(\frac 52+\nu)(\frac 92+\nu)(\frac {11}2+\nu)};\
9_2: \frac {\frac 52-\nu}{\frac 52+\nu};\
8_4: Det=\frac {(\frac 12-\nu)(\frac 52-\nu)(\frac 92-\nu)}{(\frac
    12+\nu)(\frac 52+\nu)(\frac 92+\nu)};\\
&4_5: Det=\frac {(\frac 12-\nu)^2(\frac 52-\nu)(\frac 92-\nu)(\frac
  {11}2-\nu)}{(\frac
    12+\nu)^2(\frac 52+\nu)(\frac 92+\nu)(\frac {11}2+\nu)}.
\end{align}
\end{small}
It follows that $\overline X(D_6(a_2),\nu)$ is unitary if and only if
$0\le\nu\le\frac 12.$ At $\nu=\frac 12,$ the standard module
decomposes as 
\begin{align}\notag
&X(D_6(a_2),\frac 12)=\overline X(D_6(a_2),\frac
12)+X(E_7(a_5),\phi_1)+X(E_7(a_5),\phi_2);\\\notag
&\overline X(D_6(a_2),\frac 12)|_W=16_1+2_4+9_2+8_4+9_4.\notag
\end{align}

\noindent$\mathbf{A_5+A_1}:$ The infinitesimal character is
$(\frac 14,\frac 74,-\frac 14,-\frac 94)+\nu(\frac 32,\frac 12,\frac
12,\frac 12),$ the lowest $W$-type is $6_1$, and $\fz=A_1.$ In terms of
Langlands classification, the standard module is induced from the
Steinberg representation on $\wti A_2+A_1.$ 
\begin{align}\notag
X(A_5+A_1)|_W&=\text{Ind}_{W(\wti A_2+A_1)}^{W(F_4)}((111)\otimes (11))\\ \notag
              &=6_1+16_1+2\cdot 8_4+8_2+4_3+4_5+12_1+9_2+2\cdot 9_4+2_4+1_4.\notag
\end{align}
We compute the intertwining operator for arbitrary parameter $c$. In
that case, the infinitesimal character is $(\frac 14,c-\frac 14,-\frac
14,-\frac 14-c)+\nu(\frac 32,\frac 12,\frac 12,\frac 12).$
\begin{small}
\begin{align}
&6_1: 1;\
16_1: \frac{\frac 12-\nu}{\frac 12+\nu};\
8_2: \frac {(\frac 12-\nu)^2(\frac 32-\nu)}{(\frac 12+\nu)^2(\frac 32+\nu)};\
12_1: \frac{(\frac 12-\nu)^2}{(\frac 12+\nu)^2};\
9_2: \frac {(\frac 12-\nu)(2c-\frac 32-\nu)}{(\frac 12+\nu)(2c-\frac 32+\nu)};\\\notag
&4_3: \frac{2c-\frac 32-\nu}{2c-\frac 32+\nu};\ 
2_4: \frac {(\frac 12-\nu)(c+\frac 12-\nu)(2c-\frac 32-\nu)(2c+\frac
    12-\nu)}{(\frac 12+\nu)(c+\frac 12+\nu)(2c-\frac 32+\nu)(2c+\frac
    12+\nu)};\\\notag
&4_5: \frac {(\frac 12-\nu)^2(\frac 32-\nu)(c+\frac 12-\nu)(2c+\frac 12-\nu)}{(\frac 12+\nu)^2(\frac 32+\nu)(c+\frac 12+\nu)(2c+\frac
    12+\nu)};\\\notag
&1_4: \frac {(\frac 12-\nu)^2(\frac 32-\nu)(c+\frac 12-\nu)(2c+\frac
    12-\nu)(2c+\frac
    32-\nu)}{(\frac 12+\nu)^2(\frac 32-\nu)(c+\frac 12+\nu)(2c+\frac
    12+\nu)(2c+\frac 32+\nu)};\\\notag
&8_4: Det=\frac{(\frac 12-\nu)^3(c+\frac 12-\nu)(2c-\frac
    32-\nu)(2c+\frac 12-\nu)}{(\frac 12+\nu)^3(c+\frac 12+\nu)(2c-\frac
    32+\nu)(2c+\frac 12+\nu)};\\
&9_4: Det=\frac{(\frac 12-\nu)^3(\frac 32-\nu)(c+\frac
    12-\nu)(2c+\frac 12-\nu)}{(\frac 12+\nu)^3(\frac 32+\nu)(c+\frac 12+\nu)(2c+\frac 12+\nu)}.
\end{align}
\end{small}
It follows that in the case $c=2$, $\overline X(A_5+A_1,\nu)$ is
unitary if and only if $0\le\nu\le\frac 12.$ At $\nu=\frac 12,$ the standard
module decomposes as 
\begin{align}\notag
&X(A_5A_1,\frac 12)=\overline X(A_5A_1,\frac 12)+\overline
X(D_6(a_2),\frac 12)+X(E_7(a_5),\phi_1)\\\notag
&\overline X(A_5A_1,\frac 12)=6_1+4_3.\notag
\end{align}

\bigskip

\noindent$\mathbf{D_5(a_1)+A_1}:$ The infinitesimal character is
$(\frac 32+\nu,-\frac 32+\nu,\frac 32,\frac 12),$ the lowest $W$-type is
$4_1$, and $\fz=A_1.$ In terms of Langlands classification, the
standard module is induced from a two-dimensional discrete series on
$B_3$. The $W$-structure is
$$X(D_5(a_1)+A_1))|_W=\text{Ind}_{W(B_3)}^{W(F_4)}(0\times 12)=4_1+16_1+2_4+9_4+9_2+8_4.$$
The intertwining operators are:
\begin{small}
\begin{align}\notag
&4_1: 1;\
16_1: \frac {1-\nu}{1+\nu};\
9_2: \frac {(1-\nu)(2-\nu)}{(1+\nu)(2+\nu)};\
9_4: \frac {(1-\nu)(4-\nu)}{(1+\nu)(4+\nu)};\\
&8_4: \frac {(1-\nu)(2-\nu)(4-\nu)}{(1+\nu)(2+\nu)(4+\nu)};\ 
2_4: \frac {(1-\nu)(2-\nu)(4-\nu)(5-\nu)}{(1+\nu)(2+\nu)(4+\nu)(5+\nu)}.
\end{align}
\end{small}
It follows that $\overline X(D_5(a_1)+A_1)$ is unitary if and only if
$0\le\nu\le 1.$ At $\nu=1$, the standard module decomposes as 
\begin{align}\notag
&X(D_5(a_1)+A_1,1)=\overline X(D_5(a_1)+A_1,1)+\overline
X(D_6(a_2),\frac 12)\\\notag
&\overline X(D_5(a_1)+A_1,1)|_W=4_1.
\end{align}

\noindent$\mathbf{A_3+A_2+A_1}:$ The infinitesimal character is
$(\frac 12,\frac 12,-\frac 32,\frac 12)+\nu(2,1,1,0),$ the lowest
$W$-type is $4_4$, and $\fz=A_1.$ In terms of Langlands classification,
the standard module is induced from the Steinberg  representation on
$A_2+\wti A_1.$ The $W$-structure is:
\begin{align}\notag
X(A_3+A_2+A_1)|_W&=\text{Ind}_{W(A_2+\wti A_1)}^{W(F_4)}((111)\otimes
(11))\\\notag
 &=4_4+16_1+8_4+2\cdot 8_2+4_5+12_1+6_1+9_3+2\cdot 9_4+2_2+1_4.\notag
\end{align} 
We compute the intertwining operators for general parameter $c$. In
that case the infinitesimal character is $(\frac 12,\frac
{c-1}2,-\frac {c+1}2,\frac 12)+\nu(2,1,1,0).$
\begin{small}
\begin{align}\notag
&4_4: 1;\
9_3: \frac {\frac c2-\nu}{\frac c2+\nu};\
6_1: \frac {\frac {3c-2}2-\nu}{\frac {3c-2}2+\nu};\
16_1: \frac {(\frac c2-\nu)(\frac {3c-2}2-\nu)}{(\frac c2+\nu)(\frac {3c-2}2+\nu)};\\\notag
&12_1: \frac {(\frac c2-\nu)^2(\frac {3c-2}2-\nu)}{(\frac c2+\nu)^2(\frac {3c-2}2+\nu)};\
8_4: \frac {(\frac c2-\nu)^2(\frac {3c-2}2-\nu)(\frac
    {3c}2-\nu)}{(\frac c2+\nu)^2(\frac {3c-2}2+\nu)(\frac {3c}2+\nu)};\
2_2: \frac{(\frac c2-\nu)(\frac {c+1}2-\nu)(\frac {c+2}2-\nu)}{(\frac
    c2+\nu)(\frac {c+1}2+\nu)(\frac {c+2}2+\nu)};\\\notag
&4_5: \frac{(\frac c2-\nu)^2(\frac {c+1}2-\nu)(\frac
    {c+2}2-\nu)(\frac{3c-2}2-\nu)(\frac {3c}2-\nu)}{(\frac
    c2+\nu)^2(\frac {c+1}2+\nu)(\frac {c+2}2+\nu)(\frac{3c-2}2+\nu)(\frac {3c}2+\nu)};\\\notag
&1_4: \frac{(\frac c2-\nu)^2(\frac {c+1}2-\nu)(\frac
    {c+2}2-\nu)(\frac{3c-2}2-\nu)(\frac {3c}2-\nu)(\frac {3c+2}2-\nu)}{(\frac
    c2+\nu)^2(\frac {c+1}2+\nu)(\frac {c+2}2+\nu)(\frac{3c-2}2+\nu)(\frac {3c}2+\nu)(\frac {3c+2}2+\nu)};\\\notag
&8_2: Det=\frac{(\frac c2-\nu)^3(\frac {c+1}2-\nu)(\frac
    {c+2}2-\nu)(\frac{3c-2}2-\nu)}{(\frac
    c2+\nu)^3(\frac {c+1}2+\nu)(\frac {c+2}2+\nu)(\frac{3c-2}2+\nu)};\\
&9_4: Det=\frac{(\frac c2-\nu)^3(\frac {c+1}2-\nu)(\frac
    {c+2}2-\nu)(\frac{3c-2}2-\nu)^2(\frac {3c}2-\nu)}{(\frac
    c2+\nu)^3(\frac {c+1}2+\nu)(\frac
    {c+2}2+\nu)(\frac{3c-2}2+\nu)^2(\frac {3c}2+\nu)}.
\end{align}
\end{small}
In the case $c=2$, it follows that $\overline X(A_3+A_2+A_1,\nu)$ is
unitary if and only if $0\le\nu\le 1.$ At $\nu=1$, the standard module
decomposes as 
\begin{align}\notag
&X(A_3A_2A_1,1)=\overline X(A_3A_2A_1,1)+\overline X(D_4A_1,(\frac
32,\frac 12))+X(E_7(a_5),\phi_1);\\\notag
&\overline X(A_3A_2A_1,1)|_W=4_4+6_1.\notag
\end{align}
Note that $\overline X(A_3A_2A_1,1)$ is the $IM$-dual of $\overline
X(A_5A_1,\frac 12).$

\end{proof}

\subsection{Matching of intertwining operators}\label{sec:5.1b} Let $(M,\sigma,\nu)$ be a (Hermitian)
Langlands parameter, where $\sigma$ is a discrete series module for
$\bH_M$ (or rather $\bH_{M_0}$, notation as in section
\ref{sec:1.4}). 
We construct a Hecke algebra $\bH_\sigma$ with
possibly unequal parameters, with a Weyl group $W_\sigma.$ Let
$\{e,h,f\}$ be a Lie triple parameterizing the tempered module
$\sigma.$ Then in all cases which we need to consider except for the
case $\{e,h,f\}\subset D_4(a_1)+A_1$, the root system for $\bH_\sigma$
and $W_\sigma$ is the root system of the reductive Lie algebra
$\fz(e,h,f)$ of the centralizer in $\fg$ of $\{e,h,f\}.$ In the case
$D_4(a_1)+A_1$, the centralizer is of type $2A_1$, but the root system
we consider is of type $B_2.$ The reason is the presence of a
component group $\bZ/2\bZ$ in that case.

Firstly, let us collect the relevant results from the operators
calculations in the maximal parabolic case, in the previous
section. For the case of $\bH(F_4,(1,2))$, each generalized discrete
series induced from maximal parabolics has a unique lowest $W$-type, $\mu_0.$

\begin{lemma}\label{l:match} For every maximal parabolic case
  $(M,\sigma)$, with lowest $W$-type $\mu_0$, there is a $W$-type
  $\mu_1$ such that
\begin{enumerate}
\item[(i)]  the operator $r_{\mu_1}(M,\sigma,\nu)$ is
  identical to the operator $r_{sgn}(\nu)$ for a Hecke algebra of type
  $A_1$, and
\item[(ii)] the Langlands quotient $\overline X(M,\sigma,\nu)$ is
  unitary if and only if $r_{\mu_1}(M,\sigma,\nu)$ is positive
  semidefinite.
\end{enumerate}
\end{lemma}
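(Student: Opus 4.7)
The plan is a direct case-by-case verification, using the explicit intertwining operator tables already assembled in the preceding maximal parabolic computation. For each of the pairs $(M,\sigma)$ treated there (namely $D_6$, $D_6(a_1)$, $D_5+A_1$, $D_6(a_2)$, $A_5+A_1$, $D_5(a_1)+A_1$, and $A_3+A_2+A_1$), I would first identify among the $W$-types $\mu$ of $X(M,\sigma,\nu)$ a distinguished $\mu_1$ whose operator has a single scalar factor of the form $\frac{k-\nu}{k+\nu}$, matching exactly the operator $r_{sgn}(\nu)$ on the sign line of the spherical principal series of $\bH(A_1,(k))$ (with $k$ the ``critical'' parameter dictating where the first reducibility of $X(M,\sigma,\nu)$ occurs). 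From the tables, one reads off, for $c=2$: $\mu_1=8_4$ for $D_6$ (with $k=\tfrac12$), $\mu_1=4_3$ for $D_6(a_1)$, $\mu_1=2_2$ for $D_5+A_1$, $\mu_1=12_1$ (or $6_2$) for $D_6(a_2)$, $\mu_1=16_1$ for $A_5+A_1$, $\mu_1=16_1$ for $D_5(a_1)+A_1$, and $\mu_1=9_3$ for $A_3+A_2+A_1$. This settles claim (i) by inspection.

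For claim (ii), one direction is automatic: if $\overline X(M,\sigma,\nu)$ is unitary, then every normalized operator $r_\mu(M,\sigma,\nu)$ is positive semidefinite, in particular $r_{\mu_1}(M,\sigma,\nu)$. For the converse, I would verify that the interval $0\le \nu\le k$ on which $r_{\mu_1}(M,\sigma,\nu)$ is positive semidefinite coincides with the interval on which every other operator $r_\mu(M,\sigma,\nu)$ appearing in the previous section is positive semidefinite. This reduces to a finite check: each scalar factor in the remaining operators is of the form $\frac{a-\nu}{a+\nu}$ with $a\ge k$, so it does not change sign on $[0,k]$; for the multiplicity-free $W$-types this is immediate from the listed formulas, and for the $W$-types $\mu$ appearing with multiplicity greater than one (e.g. $8_4$, $9_4$ for $D_6(a_2)$, $A_5+A_1$, $A_3+A_2+A_1$), one uses the fact that on $[0,k)$ the standard module $X(M,\sigma,\nu)$ is irreducible, so each $r_\mu(M,\sigma,\nu)$ is invertible and its signature is constant; its value at $\nu=0$ is positive definite since $X(M,\sigma,0)$ is the unitary tempered module $\sigma$ itself (which is unitary by the discussion in section \ref{sec:1.7}), so positive definiteness persists throughout $[0,k)$ and positive semidefiniteness at the endpoint $\nu=k$ follows by continuity.

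The main obstacle is the second step for the multiplicity $\ge 2$ types: one cannot simply factor the operator into rational scalars, and must invoke both the irreducibility of $X(M,\sigma,\nu)$ for $\nu\in[0,k)$ (which is what guarantees invertibility of each $r_\mu$) and the tempered unitarity at $\nu=0$. Both ingredients are available: the reducibility points of $X(M,\sigma,\nu)$ are read from the explicit decompositions recorded in the previous section (the first reducibility always occurring at $\nu=k$), and the unitarity of $\sigma$ as an $\bH_M$-module induces unitarity of $X(M,\sigma,0)=I(M,\sigma)$ by unitary induction, as in the proof of corollary \ref{c:1.7}. Combining the two inputs yields that $r_{\mu_1}(M,\sigma,\nu)\ge 0$ forces $r_\mu(M,\sigma,\nu)\ge 0$ for every $\mu$ occurring in $X(M,\sigma,\nu)$, completing (ii).
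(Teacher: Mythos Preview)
Your proposal is correct and follows essentially the same route as the paper. For part (i), you identify precisely the same list of $\mu_1$'s (and the same operators $\frac{k-\nu}{k+\nu}$) that the paper records in its table; this is the whole content of the paper's proof, which simply extracts these entries from the preceding intertwining operator computations.

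For part (ii) there is a mild difference in presentation. The paper does not argue further: it relies implicitly on the preceding Proposition, where the unitary range for each $(M,\sigma)$ was already shown (by inspecting \emph{all} $r_\mu$) to be exactly $0\le\nu\le k$, so that positivity of $r_{\mu_1}$ is tautologically equivalent to unitarity. You instead give a self-contained complementary series argument: irreducibility of $X(M,\sigma,\nu)$ on $[0,k)$, unitarity at $\nu=0$ by unitary induction from the tempered $\sigma$, and continuity of signatures. This is a legitimate alternative and slightly more conceptual, but note that it still rests on the same tables, since the claim ``first reducibility occurs at $\nu=k$'' is read off from the zeros of the listed operators (including the determinants in the multiplicity $\ge 2$ cases). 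One small phrasing to tighten: $X(M,\sigma,0)$ is not ``the tempered module $\sigma$ itself'' but the unitarily induced module $I(M,\sigma)$; you use this correctly a line later, so this is cosmetic.
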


\begin{proof} The explicit operators and cases are

\begin{table}\label{t:max}
\begin{tabular}{|c|c|c|c|c|}
\hline
{Levi $M$} &$\CO$ &$\mu_0$ &$\mu_1$ &$r_{\mu_1}(M,\sigma,\nu)$\\
\hline
$C_3$ &$D_6$ &$9_4$ &$8_4$ &$\frac{1/2-\nu}{1/2+\nu}$\\
      &$D_6(a_1)$ &$9_2$ &$4_3$ &$\frac{1/2-\nu}{1/2+\nu}$\\
      &$D_6(a_2)$ &$16_1$ &$12_1$ or $6_2$
&$\frac{1/2-\nu}{1/2+\nu}$\\
\hline
$B_3$ &$D_5+A_1$ &$8_2$ &$2_2$ &$\frac{1/2-\nu}{1/2+\nu}$\\
      &$D_5(a_1)+A_1$ &$4_1$ &$16_1$ &$\frac{1-\nu}{1+\nu}$\\
\hline
$\wti A_2+A_1$ &$A_5+A_1$ &$6_1$ &$16_1$ &$\frac{1/2-\nu}{1/2+\nu}$\\
\hline
$A_2+\wti A_1$ &$A_3+A_2+A_1$ &$4_4$ &$9_3$ &$\frac{1-\nu}{1+\nu}$\\
\hline
\end{tabular}
\end{table}

\end{proof} 

\begin{definition}\label{d:petite} Let $\bH$ be an arbitrary graded
  Hecke algebra.
For every $(M,\sigma)$ maximal parabolic case, we call $\sigma$-petite
types, the following $W$-types which appear in $X(M,\sigma,\nu)$:
the lowest $W$-types, and any $\mu_1$ which satisfy the properties from lemma \ref{l:match}.
Now assume $(M,\sigma)$ is not maximal parabolic. A $W$-type $\mu$ is
called $\sigma$-petite, if for every Levi $M'$ such that $M\subset M'$
is maximal, the restriction of $\mu$ to $W(M')$ contains only
$\sigma$-petite $W(M')$-types. 
\end{definition}

The main idea
is summarized in the next statement, which is proposition 5.6 in
\cite{BC1} (see also section 2.10 in \cite{Ci2}). Recall from
(\ref{corr}) that to every $W$-type $\mu$ which appears in
$X(M,\sigma,\nu)$ we attach a $W_\sigma$-type $\rho(\mu).$

\begin{proposition}\label{match}
With the notation above, the set of $\sigma$-petite $W$-types
$\{\mu_0,\mu_1,\dots,\mu_l\}$ in $X(M,\sigma,\nu)$ with  the
corresponding $W_\sigma$-types 
$\{\rho(\mu_0),\rho(\mu_1),\dots,\rho(\mu_l)\}$, has the property that
$$r_\mu(M,\sigma,\nu)=r_{\rho(\mu)}(\nu),\text{ for all }\mu\in
  \{\mu_0,\dots,\mu_l\},$$
where $r_\mu(M,\sigma,\nu)$ is the operator in $\bH$ (defined as in
  (\ref{2.5.5})), and $r_{\rho(\mu)}$ is the spherical operator in
  $\bH_\sigma.$  
\end{proposition}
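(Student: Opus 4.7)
The plan is to reduce the general case to the maximal parabolic case, where the computation has essentially been carried out already in Lemma \ref{l:match}. Recall from Definition \ref{d:petite} that $\sigma$-petiteness is defined inductively: a $W$-type is $\sigma$-petite precisely when its restriction to every maximal intermediate Levi contains only petite types for the corresponding maximal parabolic subdatum. This nested structure is exactly what allows an inductive matching of intertwining operators along a minimal gallery from the chamber of $\nu$ to the chamber of $-\nu$.

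First I would choose a reduced expression $w_m = s_{\beta_1}\dotsb s_{\beta_N}$ as in Section \ref{sec:1.8} and factor the long intertwining operator $\CA(M,\sigma,\nu)$ as a composition $\CA_N \circ\dotsb\circ \CA_1$, where each $\CA_i$ crosses a single wall of the chamber containing the current parameter, i.e.\ each $\CA_i$ is an intertwining operator between two standard modules induced from maximal parabolic subalgebras $\bH_{M_i'}$ containing $\bH_M$. On each petite $W$-type $\mu$, Frobenius reciprocity and (\ref{2.5.2})--(\ref{2.5.4}) identify the operator $r_\mu(\CA_i)$ with the maximal-parabolic operator $r_{\mu|_{W(M_i')}}(M,\sigma,\nu_i)$. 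By the petiteness hypothesis, $\mu|_{W(M_i')}$ decomposes into $W(M_i')$-types that are $\sigma$-petite in the maximal parabolic sense, and Lemma \ref{l:match} then equates each such summand with the rank-one spherical operator $r_{sgn}(\nu_i')$ for a Hecke algebra of type $A_1$, whose parameter is determined by the pair $(M,M_i')$ and $\sigma.$

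In parallel, on the $\bH_\sigma$-side, one chooses a reduced expression for the longest element of $W_\sigma$ whose simple reflections correspond, wall by wall, to the walls crossed on the $\bH$-side. The spherical operator $r_{\rho(\mu)}(\nu)$ in $\bH_\sigma$ decomposes, via (\ref{eq:4.2.2}), into a product of rank-one factors, one for each wall, each being the spherical $A_1$ operator for the appropriate parameter $c_{\al_j}$ of $\bH_\sigma$. The matching then comes down to two things: (a) identifying the $(\pm 1)$-eigenspace decomposition of $\mu|_{W(M_i')}$ (restricted to the petite piece) with the corresponding $\pm 1$-eigenspace decomposition of $\rho(\mu)$ under the $W_\sigma$-simple reflection crossing the parallel wall, and (b) checking that the rank-one parameter arising from the maximal parabolic factorization in $\bH$ equals the $\bH_\sigma$-parameter on that simple root. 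Once both matchings are established factor by factor, the two products agree, giving $r_\mu(M,\sigma,\nu) = r_{\rho(\mu)}(\nu).$

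The main obstacle is the parameter-matching step (b), which is the reason $\bH_\sigma$ had to be \emph{prescribed} rather than simply taken to be a Hecke algebra associated to $\fz(\CO)$ with its most obvious parameters: in the exceptional case $\CO = D_4(a_1)+A_1$ noted in Section \ref{sec:5.1b}, the component group forces the root system for $\bH_\sigma$ to be $B_2$ rather than $2A_1$, and the $B_2$-parameters must be read off from the rank-one operators produced by the maximal parabolic reductions rather than from the naive centralizer. The verification of (a) is essentially structural and follows from the construction of $\rho(\mu)$ in terms of $\Hom_W[\mu : X(M,\sigma,\nu)]$ and the action of $W_\sigma$ on it; the case analysis required to check (b) uniformly across the orbits in Figure \ref{3A1nil} is where the real work lies, and this is effectively what the tables in Section \ref{sec:5.1a} accomplish.
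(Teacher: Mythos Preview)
Your approach is essentially the correct one, and it is the standard argument for petite-type matching results (as in \cite{BC1}, proposition 5.6, and \cite{Ci2}, section 2.10). Note, however, that the paper does not give its own proof of this proposition: it simply cites \cite{BC1} and then proceeds directly to the case-by-case computations in Section \ref{sec:5.1b}, which are applications of the proposition rather than its proof.

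A small imprecision worth flagging: the factorization you want is not literally a reduced expression of $w_m$ in $W$ but rather a reduced expression in the quotient $N_W(W(M))/W(M)\cong W_\sigma$, lifted back to $W$. The simple reflections of $W_\sigma$ are what correspond to the intermediate Levis $M'$ with $M\subset M'$ maximal, and it is this correspondence that makes each factor $\CA_i$ live in a maximal-parabolic situation where Lemma \ref{l:match} (or its analogue) applies. Your sketch implicitly uses this, but the phrasing ``reduced expression $w_m=s_{\beta_1}\dotsb s_{\beta_N}$'' suggests $W$-simple reflections, which would not directly give rank-one operators relative to $\bH_M$. Once this is made precise, your steps (a) and (b) are exactly the right two verifications, and your observation that the tables in Section \ref{sec:5.1a} supply the input for (b) is correct.
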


The explicit calculations and details will occupy the rest of this
section. The matching of intertwining operators from proposition
\ref{match} is used to rule out non-unitary modules. Ideally, one
would like that the remaining modules are all unitary. This is not the
case in general, and more (ad-hoc) arguments are needed to prove
non-unitarity. 

To prove unitarity, we use deformations of unitary irreducible modules
(\ie, complementary series methods), calculations of composition
factors and the Iwahori-Matsumoto involution.

\bigskip

\noindent$\mathbf{(A_5)''}:$ The infinitesimal character is
$(\nu_2+\frac {3\nu_1}2,2+\frac {\nu_2}2,\frac
	 {\nu_1}2,-2+\frac{\nu_1}2),$ the lowest $W$-type is $8_1$, and
	 $\fz=G_2.$ In the Langlands classification, the standard
	 module is induced from the Steinberg representation on $\wti
	 A_2$. The $W$-structure is $$X((A_5)'')|_W=\text{Ind}_{W(\wti A_2)}^{W(F_4)}((111)).$$
The restrictions of the nearby $W$-types are as follows:

\noindent\begin{tabular}{llllll}
Nilpotent &$(A_5)''$ &$A_5A_1$ &$D_6(a_2)$ &$E_7(a_5)$ &$E_7(a_5)$\\
$W$-type    &$8_1$     &$6_1$    &$16_1$     &$12_1$     &$6_2$\\
Multiplicity &$1$    &$1$      &$2$        &$2$        &$1$\\
$\wti A_2\subset \wti A_2+A_1$ &$(2)$  &$(11)$  &$(2),(11)$
&$(2),(11)$ &$(2)$\\
$\wti A_2\subset C_3$ &$11\times 1$ &$11\times 1$ &$11\times 1,1\times 11$
&$11\times 1,1\times 11$ &$1\times 11$\\
$W(\fz)=W(G_2)$ &$(1,0)$  &$(1,3)''$ &$(2,2)$ &$(2,1)$ &$(1,3)'$\\

\end{tabular}

We need the intertwining operators for the induced module from the
Steinberg representation on $A_2$ in the Hecke algebra of type $C_3$,
with parameters 2--2$\Leftarrow$1. The operators are $$11\times 1: 1,\quad 1\times 11:
\frac {\frac 12-\nu}{\frac 12+\nu},\quad 111\times 0: \frac
      {\frac 32-\nu}{\frac 32+\nu}.$$
Therefore, the matching of operators is with the Hecke algebra of type
$G_2$ with equal parameters, $\bH(G_2).$ The nearby $W$-types match all
the relevant $W(G_2)$-types, so the unitary set $\CU(A_5'')$ is
included in the spherical unitary dual of $\bH(G_2).$ The modules of
$\bH(G_2)$ are parameterized by nilpotent orbits in the Lie algebra of
type $G_2$. We analyze the composition series and unitarity of
$X(A_5'')$ by cases corresponding to these nilpotent orbits.

$(1):$ In $\bH(G_2)$, this is the spherical complementary series 
\begin{equation}\label{eq:CS(A5'')}
\{3\nu_1+2\nu_2<1\}\cup\{3\nu_1+\nu_2>1>2\nu_1+\nu_2\}.
\end{equation}
The standard module $X(A_5'')$ is irreducible in these regions, and
therefore unitary (being unitarily induced and unitary when
$\nu_2=0$).

$A_1:$ The parameters are of the form $(\nu_1,\nu_2)=(-\frac
12+\nu,1).$ They are unitary for $0\le\nu<\frac 12$, being endpoints
of the complementary series (\ref{eq:CS(A5'')}). The decomposition of
the standard module is
\begin{align}\notag
X(A_5'',(1,-\frac 12+\nu))&=\overline X(A_5'',(1,-\frac
12+\nu))+X(A_5+A_1,\nu)\\\notag
\overline X(A_5'',(1,-\frac 12+\nu))|_W&=8_1+16_1+12_1+6_2+2\cdot
8_4+4_3+4_5+2\cdot 9_2\\\notag
&+9_4+2_4+1_2.\notag
\end{align}

$\wti A_1:$ The parameters are of the form $(\nu_1,\nu_2)=(1, -\frac
32+\nu,1).$ They are unitary for $0\le\nu<\frac 12$, being endpoints
of the complementary series (\ref{eq:CS(A5'')}). The decomposition of
the standard module is
\begin{align}\notag
X(A_5'',(-\frac 32+\nu,1))&=\overline X(A_5'',(-\frac
32+\nu,1))+X(D_6(a_2),\nu)\\\notag
\overline X(A_5'',(-\frac 32+\nu,1))|_W&=8_1+6_1+16_1+12_1+2\cdot
8_4+2\cdot 4_3+2\cdot 9_2\\\notag
&+9_4+2_4+1_2.\notag
\end{align}

$G_2(a_1):$ This is the parameter $(\nu_1,\nu_2)=(0,1).$ It is
unitary, being an endpoint of the complementary series
(\ref{eq:CS(A5'')}). The decomposition of the standard module is 
\begin{align}\notag
X(A_5'',(0,1))&=\overline X(A_5'',(0,1))+\overline X(A_5A_1,\frac
12)+\overline X(D_6(a_2),\frac 12)\\\notag
&+X(E_7(a_5),\phi_1)+X(E_7(a_5),\phi_2)\\\notag\overline X(A_5'',(0,1))|_W&=8_1+12_1+8_4+9_2+1_2.\notag
\end{align}

$G_2:$ This is the parameter $(\nu_1,\nu_2)=(1,1).$ In $\bH(G_2)$, it
is isolated. We compute the $W$-structure of the standard module. In
addition to the operators given by the nearby $W$-types, we need to
check the operator on $9_2.$
\begin{align}\notag
X(A_5'',(1,1))&=\overline X(A_5'',(1,1))+\overline X(A_5A_1,\frac
32)+\overline X(D_6(a_2),\frac 52)\\\notag&+\overline X(D_5A_1,\frac
12)+\overline X(D_6(a_1),\frac 12)+X(E_7(a_4),\phi_2)\\\notag
\overline X(A_5'',(1,1))|_W&=8_1+9_2.
\end{align}
Moreover, the operator on $9_2$ is positive at $(1,1)$, so this point
is unitary.

\medskip

In conclusion, the unitary parameter set $\CU(A_5'')$ is identical
with the spherical unitary dual $\CS\CU(\bH(G_2)).$

\bigskip

\noindent$\mathbf{D_4+A_1}:$ The infinitesimal character is
$(\nu_1,\nu_2,\frac 52,\frac 12),$ the lowest $W$-type is $9_3$, and
$\fz=B_2.$ In Langlands classification, the standard module is induced
from the Steinberg representation on $C_2$. The $W$-structure is
$$X(D_4+A_1)|_W=\text{Ind}_{W(C_2)}^{W(F_4)}(0\times 11).$$
The restrictions of the nearby $W$-types are as follows:

\noindent\begin{tabular}{llllll}
Nilpotent &$D_4+A_1$ &$D_5(a_1)A_1$ &$D_6(a_2)$ &$E_7(a_5)$ &$D_6(a_1)$\\
$W$-type    &$9_3$     &$4_1$         &$16_1$     &$12_1,6_2$     &$9_2$\\
Multiplicity &$1$    &$1$           &$2$        &$1,1$        &$1$\\
$B_2\subset B_3$  &$1\times 11$ &$0\times 12$ &$0\times 12,1\times 11$
&$1\times 11$ &$0\times 12$\\
$C_2\subset C_3$ &$0\times 12$ &$0\times 12$ &$0\times 12,1\times 11$
&$1\times 11$ &$1\times 11$\\
$W(\fz)=W(B_2)$ &$2\times 0$ &$11\times 0$ &$1\times 1$ &$0\times 2$
&$0\times 11$

\end{tabular} 

We need the intertwining operators for the induced modules from the
Steinberg representation of $C_2=B_2$ in the Hecke algebras of types
$B_3$, and $C_3$, with parameters 1--1$\Rightarrow$2, respectively
2--2$\Leftarrow$1. 

In $B_3$ (1--1$\Rightarrow$2), the infinitesimal character is
$(\nu,3,2)$, and the standard module is the induced from the Steinberg
representation on $B_2.$ The operators are $$1\times 11: 1,\quad
0\times 12: \frac {1-\nu}{1+\nu},\quad 0\times 111: \frac
{4-\nu}{4+\nu}.$$
In $C_3$ (2--2$\Leftarrow$1), the infinitesimal character is
$(\nu,\frac 52,\frac 12)$, and the standard module is the induced from the Steinberg
representation on $C_2.$ The operators are $$0\times 12: 1,\quad
1\times 11: \frac {\frac 32-\nu}{\frac 32+\nu},\quad 0\times 111: \frac
{(\frac 32-\nu)(\frac 92-\nu)}{(\frac 32+\nu)(\frac 92+\nu)}.$$

Therefore, the matching of operators is with the Hecke algebra of type
$B_2$ with parameters 1$\Rightarrow$3/2. The nearby $W$-types match all
the $W(B_2)$-representations of this $B_2,$ therefore, the unitary
parameter set $\CU(D_4+A_1)$ is included in the spherical unitary dual
of $\bH(B_2,1,3/2).$ The modules of $\bH(B_2,1,3/2)$ are parameterized
by a cuspidal local system on $\fk {sp}(2)\oplus \bC^2\subset \fk
{sp}(6).$ Specifically the nilpotent orbits of $\fk{sp}(6)$ which
enter in the parameterization are $(6),(42),(411),(222),(21^4).$ We
analyze the composition series and unitarity of $X(D_4+A_1)$ by cases
corresponding to these nilpotent orbits.

$(21^4):$ In $H(B_2,1,3/2)$, this is the spherical complementary
series 
\begin{equation}\label{eq:CS(D4A1)}
\{\nu_1+\nu_2<1\}\cup\{\nu_1-\nu_2>1,\nu_1<\frac 32\}. 
\end{equation}
The
standard module $X(D_4+A_1)$ is irreducible in these regions, and
therefore unitary (being unitarily induced and unitary when
$\nu_2=0$). 

$(222):$ The parameters are of the form $(\nu_1,\nu_2)=(\frac
12+\nu,-\frac 12+\nu).$ They are unitary for $0\le\nu<\frac 32$, being
endpoints of the complementary series (\ref{eq:CS(D4A1)}). The standard module
decomposes as follows 
\begin{align}\notag
X(D_4A_1,(\frac 12+\nu,-\frac 12+\nu))&=\overline X(D_4A_1,(\frac
12+\nu,-\frac 12+\nu)+X(D_5(a_1)A_1,\nu)\\\notag
\overline X(D_4A_1,(\frac 12+\nu,-\frac
12+\nu)|_W&=9_3+16_1+12_1+2\cdot 9_4+8_4+2\cdot 8_2+6_2\\\notag
&+2\cdot 4_5+2_2+1_4.\notag
\end{align}
$(411):$ The parameters are $(\nu_1,\nu_2)=(\nu,\frac 32).$ They are
unitary for $0\le\nu\le\frac 12$, being endpoints of the complementary
series (\ref{eq:CS(D4A1)}). The standard module decomposes as follows
\begin{align}\notag
X(D_4A_1,(\nu,\frac 32))&=\overline X(D_4A_1,(\nu,\frac
32))+X(D_6(a_2),\nu)\\\notag
\overline X(D_4A_1,(\nu,\frac 32))|_W&=9_3+16_1+8_2+4_1+2_2+9_4.
\end{align}
$(42):$ The parameters are $(\nu_1,\nu_2)=(\frac 32,\frac 12).$ This
is an endpoint of the complementary series (\ref{eq:CS(D4A1)}),
therefore unitary. The decomposition of the standard module is 
\begin{align}\notag
X(D_4A_1,(\frac 32,\frac 12))&=\overline X(D_4A_1,(\frac 32,\frac
12))+\overline X(D_5(a_1)A_1,1)+\overline X(D_6(a_2),\frac
12)\\\notag
&+X(E_7(a_5),\phi_1)+X(E_7(a_5),\phi_2)\\\notag
\overline X(D_4A_1,(\frac 32,\frac 12))|_W&=9_3+16_1+8_2+9_4+2_2.
\end{align}
$(6):$ The parameters are $(\nu_1,\nu_2)=(\frac 52,\frac 32).$ This
point is isolated. From the $W$-structure of $\overline
X(D_4A_1,(\nu,\frac 32))$, we see that the only $W$-types which can be
present in $\overline X(D_4A_1,(\frac 52,\frac 32))$ are
$9_3,16_1,8_2,4_1,2_2,9_4.$ By virtue of the matching with operators
in $\bH(B_2,1,3/2)$, we know $16_1$ and $4_1$ cannot be present. We
compute the operators on the remaining ones and find that the
$W$-structure is $$\overline X(D_4A_1,(\frac 52,\frac 32))=9_3+8_2.$$
Moreover, the operator on $8_2$ is positive, so $\overline
X(D_4A_1,(\frac 52,\frac 32))$ is unitary. Note also that $\overline
X(D_4A_1,(\frac 52,\frac 32))$ is the $IM$-dual of $\overline X(A_5'',(1,1)).$

\medskip

In conclusion, the unitary set $\CU(D_4+A_1)$ is identical with $\CS\CU(\bH(B_2,1,3/2).$

\bigskip

\bigskip

\noindent$\mathbf{D_4(a_1)+A_1:}$ The infinitesimal character is
$(\nu_1,\nu_2,\frac 32,\frac 12),$ the lowest $W$-types are $9_1$ and
$2_1$, and $\fz=2A_1.$ The two lowest $W$-types are separate when
$\nu_1=\nu_2.$ If $\nu_1\neq\nu_2,$ then the (Langlands
classification) standard module is
induced from a one dimensional discrete series on $B_2$
(1$\Rightarrow$2). 
As a W-module
$$X(D_4(a_1)A_1,(\nu_1,\nu_2))|_W=\text{Ind}_{W(B_2)}^{W(F_4)}(0\times
2),\quad\text{if }\nu_1\neq\nu_2.$$  
When $\nu_1=\nu_2=\nu$, there are two standard modules, corresponding
to the two lowest $W$-types, induced from
two tempered representations of $B_3$ (1--1$\Rightarrow$2). As
W-modules,
\begin{align}\notag
  X(D_4(a_1)A_1,(\nu,\nu),\phi_1)|_W&=\text{Ind}_{W(B_3)}^{W(F_4)}(1\times
  2+0\times 12)=9_1+8_1+4_3+6_1\\\notag&+2\cdot 9_2+2\cdot 16_1+2\cdot 8_4+9_4+12_1+4_1+2_4.\\\notag
X(D_4(a_1)A_1,(\nu,\nu),\phi_2)|_W&=\text{Ind}_{W(B_3)}^{W(F_4)}(0\times
  3)=2_1+8_1+4_3+9_2+1_2.
\end{align}
The restrictions of the nearby $W$-types are:

\noindent\begin{tabular}{llllll}
Nilpotent &$D_4(a_1)A_1$ &$D_4(a_1)A_1$ &$A_5''$ &$A_3A_2A_1$ &$D_5(a_1)A_1$\\
$W$-type &$9_1$ &$2_1$ &$8_1$ &$4_3$ &$4_1$\\
Multiplicity &$1+0$ &$0+1$ &$1+1$ &$1+1$ &$1+0$\\
$B_2\subset B_3$ &$1\times 2$ &$0\times 3$ &$1\times 2,0\times 3$
&$1\times 2,0\times 3$ &$0\times 12$\\
$C_2\subset C_3$ &$12\times 0$ &$12\times 0$ &$12\times 0,11\times 1$
&$11\times 1,111\times 0$ &$0\times 12$\\
$W(B_2)=$ &$2\times 0$ &$11\times 0$ &$1\times 1$ &$1\times 1$
&$11\times 0$\\
$W(\fz)\rtimes \bZ/2$ 
\end{tabular}

The matching of the intertwining operators is with the Hecke algebra
$\bH(B_2,0,5/2).$ More specifically,

\begin{tabular}{ll}
$9_1:$ &$1$;\\
$2_1:$ &$1$;\\
$8_1:$ &$\left(\begin{matrix} \frac {\frac 52-\nu_1}{\frac 52-\nu_2} &0\\
    0&\frac{\frac 52-\nu_2}{\frac 52+\nu_2}\end{matrix}\right).$
\end{tabular}

For the calculations, we need some intertwining operators on modules
in the Hecke algebras of type $B_3$ and $C_3$. 

In $\bH(B_3,1,2)$, the infinitesimal character is $(\nu,2,1)$. There
are two lowest $W$-types $1\times 2$ and $0\times 3,$ which are separate
at $\nu=0$ only. When $\nu>0,$ the operators are
$$1\times 2: 1,\quad 0\times 3: 1,\quad 0\times 12: \frac
{3-\nu}{3+\nu}.$$
In $\bH(C_3,2,1)$, the infinitesimal character is $(\nu,\frac 32,\frac
12).$ The lowest $W$-type is $12\times 0.$ The operators are
$$12\times 0: 1,\quad 11\times 1: \frac {\frac 52-\nu}{\frac
  52+\nu},\quad 111\times 0: \frac {\frac 72-\nu}{\frac 72+\nu}.$$

From this calculations, it follows that the reducibility lines for
$X(D_4(a_1),A_1,(\nu_1,\nu_2)),$ when $\nu_1\neq\nu_2,$ are
$\nu_1\pm\nu_2=3,$ $\nu_i=\frac 52,\frac 72,$ $i=1,2.$ The operator on
$8_1$ shows that the unitary set $\CU(D_4(a_1)A_1)$ is included in
$0\le\nu_2\le\nu\le\frac 52.$ However, the line $\nu_1+\nu_2=3$ cuts
this region. The restrictions in the table above imply that the
operator on $4_1$ is $$4_1: \frac
{(3-(\nu_1+\nu_2))(3-(\nu_1-\nu_2))}{(3+(\nu_1+\nu_2))(3+(\nu_1-\nu_2))}.$$
So for $\nu_1\neq\nu_2,$ $X(D_4(a_1)A_1,(\nu_1,\nu_2))$ is unitary if
and only if $$\{\nu_1+\nu_2<3,\ \nu_1<\frac 52\}.$$
We also note that $\overline X(D_4(a_1)A_1,(\nu,\frac 52))$, $0\le
\nu<\frac 12$ is the $IM$-dual of $\overline X(D_4A_1,(\nu,\frac 32))$,
and $\overline X(D_4(a_1)A_1,(\frac 52,\frac 12))$ is the $IM$-dual of
$\overline X(D_4A_1,(\frac 32,\frac 12)).$ On $\nu_1+\nu_2=3,$ we
write the parameters as $(\frac 32+\nu,-\frac 32+\nu)$ (which are
unitary for $0\le\nu\le 1$). The $W$-structure here is
\begin{align}\notag
X(D_4(a_1)A_1,(\frac 32+\nu,-\frac 32+\nu))&=\overline
X(D_4(a_1)A_1,(\frac 32+\nu,-\frac 32+\nu))+X(D_5(a_1)A_1,\nu)\\\notag
\overline
X(D_4(a_1)A_1,(\frac 32+\nu,-\frac 32+\nu))|_W&=9_1+2_1+2\cdot
9_4+16_1+8_4+2\cdot 8_1\\\notag&+2\cdot 4_3+12_1+6_1+1_2.
\end{align}

It remains to analyze the case $\nu_1=\nu_2=\nu.$ The calculation on
$8_1$ can be used to conclude that both Langlands quotients
$\overline X(D_4(a_1)A_1,(\nu,\nu))$ fail to be unitary for $\nu>\frac 52.$
Moreover, the calculation for $4_1$ implies that $\overline
X(D_4(a_1)A_1,(\nu,\nu),\phi_1)$ is not unitary for $\nu>\frac
32.$ 

For $\phi_1$, the first possible reducibility point is $\nu=\frac 32$
(this is seen by looking at the $W$-types in $
X(D_4(a_1)A_1,(\nu,\nu),\phi_1)$ and their corresponding nilpotent
orbits). So it is unitary for $0\le\nu\le\frac 32.$ The decomposition at $\nu=\frac 32$ is 
\begin{align}\notag
X(D_4(a_1)A_1,(\frac 32,\frac 32),\phi_1)&=\overline
X(D_4(a_1)A_1,(\frac 32,\frac 32),\phi_1)+X(D_5(a_1)A_1,0)\\\notag
X(D_4(a_1)A_1,(\frac 32,\frac
32),\phi_1)|_W&=9_1+9_2+16_1+8_4+8_1+12_1+6_1+4_3.
\end{align}

For $\phi_2$, the first possible reducibility point is $\nu=\frac 52$,
so this is unitary if and only if $0\le\nu\le\frac 52.$ Note that
$\overline X(D_4(a_1)A_1,(\frac 52,\frac 52),\phi_2)$ is the $IM$-dual
of $X(E_7(a_4),\phi_1)$. As a W-representation, it is just $2_1.$

\medskip

In conclusion, the unitary set $\CU(D_4(a_1)A_1)$ is given by:
\begin{enumerate} 
\item $\{\nu_1+\nu_2\le 3,\ \nu_1\le \frac 52\}$, if $\nu_1\neq
  \nu_2.$
\item $0\le\nu\le\frac 32$, if $\nu_1=\nu_2=\nu$ for the
  $\phi_1$-factor.
\item $0\le\nu\le\frac 52$, if $\nu_1=\nu_2=\nu$ for the
  $\phi_2$-factor.
\end{enumerate}

\bigskip

\noindent$\mathbf{A_3+2A_1:}$ The infinitesimal character is
$(\nu_1,1+\nu_2,-1+\nu_2,\frac 12),$ the lowest $W$-type $8_3$, and
$\fz=2A_1.$ In terms of Langlands classification, the standard module
is induced from the Steinberg representation on the Hecke algebra of
type $A_1+\wti A_1.$ As a W-module,
$$X(A_3+2A_1)|_W=\text{Ind}_{W(A_1+\wti A_1)}^{W(F_4)}((11)\otimes
(11)).$$

The restrictions of nearby $W$-types are:

\noindent\begin{tabular}{lllllll}
Nilpotent &$A_3+2A_1$ &$D_4(a_1)A_1$ &$A_3A_2A_1$ &$A_5''$ &$D_4A_1$ &$D_5(a_1)A_1$\\
$W$-type &$8_3$ &$9_1$ &$4_4$ &$8_1$ &$9_3$ &$4_1$\\
Multiplicity &$1$ &$1$ &$1$ &$1$ &$2$ &$1$\\
$C_1\times A_1\subset C_3$ &$1\times 2$ &$1\times 2$ &$1\times 2$
&$11\times 1$ &$1\times 2,0\times 12$ &$0\times 12$\\
$B_1\times A_1\subset B_3$ &$11\times 1$ &$1\times 2$ &$11\times 1$
&$1\times 2$ &$1\times 2,2\times 1$ &$0\times 12$\\
$A_1\subset A_2$ &$(21)$ &$(21)$ &$(1^3)$ &$(21)$ &$(21),(1^3)$ &$(21)$\\
$\wti A_1\subset \wti A_2$ &$(21)$ &$(21)$ &$(1^3)$ &$(21)$ &$(21),(21)$ &$(21)$

\end{tabular}

We need the following calculations from the Hecke algebras of type
$B_3$ and $C_3$.

In $\bH(C_3,2,1),$ the infinitesimal character is $(1+\nu,-1+\nu,\frac
12).$ The standard module is induced from the Steinberg module on
$C_1\times A_1$, and it is reducible at $\nu=\frac 32,\frac 52,\frac
72.$ The relevant operators are:
$$1\times 2: 1,\quad 11\times 1: \frac {\frac 52-\nu}{\frac
  52+\nu},\quad 0\times 12: \frac {\frac 32-\nu}{\frac 32+\nu}.$$

In $\bH(B_3,1,2),$ the infinitesimal character is $(\frac
12+\nu,-\frac 12+\nu,2).$ The standard module is induced from the
Steinberg module on $B_1\times A_1,$ and it is reducible at $\nu=\frac
12,\frac 52,\frac 72.$ The relevant operators are:
$$11\times 1: 1,\quad 1\times 2: \frac {\frac 12-\nu}{\frac
  12+\nu},\quad 1\times 11:\frac {\frac 52-\nu}{\frac 52+\nu}.$$  

The reducibility lines for $X(A_3+2A_1,(\nu_1,\nu_2))$ are
$\nu_1=\frac 12,\frac 52,\frac 72$, $\nu_2=\frac 32,\frac 52,\frac
72,$ and $\pm\nu_1\pm 2\nu_2=\frac 32$ and $\pm\nu_1\pm\nu_2=3.$ (The
last two types come from the restrictions of the operators to $A_1\subset
A_2$ and $\wti A_1\subset \wti A_2.$)

From the restrictions we that the operators are as follows:

\begin{tabular}{ll}
$8_3:$ &$1$;\\
$9_1:$ &$\frac {\frac 12-\nu_1}{\frac 12+\nu_1}$;\\
$4_1:$ &$\frac {(\frac 12-\nu_1)(\frac 52-\nu_1)(\frac
    32-\nu_2)}{(\frac 12+\nu_1)(\frac 52+\nu_1)(\frac 32+\nu_2)}$;\\
$8_1:$ &$\frac {(\frac 12-\nu_1)(\frac 52-\nu_2)}{(\frac
    12+\nu_1)(\frac 52+\nu_2)}$;\\
$4_4:$ &$\frac {(\frac 32-(\nu_1+2\nu_2))(\frac 32-(\nu_1-2\nu_2)}{(\frac 32+(\nu_1+2\nu_2))(\frac 32+(\nu_1-2\nu_2)}$;\\
$9_3:$ &Det=$\frac {(\frac 32-\nu_2)(\frac 52-\nu_1)(\frac 32-(\nu_1+2\nu_2))(\frac 32-(\nu_1-2\nu_2)}{(\frac 32+\nu_2)(\frac 52+ \nu_1)(\frac 32+(\nu_1+2\nu_2))(\frac 32+(\nu_1-2\nu_2)}.$
\end{tabular}

In this case, it is natural to try to match the unitary set
$\CU(A_3+2A_1)$ with the spherical unitary dual of $A_1+A_1$ with
parameters $1$ and $3$. This is just the set $\{0\le\nu_1\le\frac 12,\
0\le\nu_2\le\frac 32\}.$ The operators above imply that actually the
unitary set is $$\CU(A_3+2A_1)=\{0\le\nu_1\le\frac 12,\nu_1+2\nu_2\le
\frac 32\}\cup\{0\le\nu_1\le\frac 12,0\le\nu_2\le\frac
32,2\nu_2-\nu_1\ge\frac 32\}.$$
We also record the relevant decompositions and $W$-structure of the standard
module. 
On $\nu_2=\frac 32$, $0\le\nu_1<\frac 12$, the decomposition is 
\begin{align}\notag
X(A_3+2A_1,(\nu,\frac 32))&=\overline X(A_3+2A_1,(\nu,\frac
32))+X(D_4+A_1,(\nu,\frac 12).
\end{align}
Moreover, $\overline X(A_3+2A_1,(\nu,\frac 32))$ is self $IM$-dual.

On $\nu_1=\frac 12,$ $0\le\nu_2<\frac 12$ and $1<\nu_2<\frac 32$, the
decomposition is 
\begin{align}\notag
X(A_3+2A_1,(\frac 12,\nu))=\overline X(A_3+2A_1,(\frac 12,\nu))+X(D_4(a_1)+A_1,(\nu,\nu),\phi_1).
\end{align}
The $IM$-dual of $\overline X(A_3+2A_1,(\frac 12,\nu))$ is a
spherical module.

On $2\nu_2\pm\nu_1=\frac 32,$ we write the parameter as
$(\nu_1,\nu_2)=(-\frac 12+2\nu,\frac 12+\nu).$ Then it is unitary for
$0<\nu<\frac 12.$ Here, the decomposition is
\begin{align}
X(A_3+2A_1,(-\frac 12+2\nu,\frac 12+\nu))=\overline X(A_3+2A_1,(-\frac 12+2\nu,\frac 12+\nu))+X(A_3+A_2+A_1,\nu).
\end{align}
The $IM$-dual of $\overline X(A_3+2A_1,(-\frac 12+2\nu,\frac
12+\nu))$ is parameterized by $4A_1.$

At $(\frac 12,\frac 12)$, the decomposition is
\begin{align}\notag
X(A_3+2A_1,(\frac 12,\frac 12))&=\overline X(A_3+2A_1,(\frac 12,\frac
12))+X(D_4(a_1)A_1,(\frac 12,\frac 12),\phi_1)+X(A_3A_2A_1,0)\\\notag
\overline X(A_3+2A_1,(\frac 12,\frac
12))|_W&=8_3+16_1+8_2+12_1+6_2+9_3+9_4+4_5.
\end{align}
The similar decomposition (and identical $W$-structure) holds at
$(1,1).$

Finally, at the point $(\frac 12,\frac 32),$ the decomposition is
\begin{align}\notag
X(A_3+2A_1,(\frac 12,\frac 32))&=\overline X(A_3+2A_1,(\frac 12,\frac
32))+\overline X(D_4(a_1)A_1,(\frac 32,\frac 32),\phi_1)\\\notag
&+\overline
X(D_4A_1,(\frac 12,\frac 12))+X(D_5(a_1)A_1,0)\\\notag
\overline X(A_3+2A_1,(\frac 12,\frac
32))|_W&=8_3+16_1+8_2+4_4+12_1+6_1+9_3+9_4.
\end{align}
Note that $\overline X(A_3+2A_1,(\frac 12,\frac 32))$ is the $IM$-dual
of $\overline X(D_4(a_1)A_1,(\frac 32,\frac 32),\phi_1).$

\bigskip

\noindent$\mathbf{(A_3+A_1)'':}$ The infinitesimal character is
$(\frac {\nu_1+\nu_2}2,\frac {\nu_1-\nu_2}2,1+\frac
	 {\nu_2}2,-1+\frac{\nu_2}2),$ the lowest $W$-type is $4_2,$ and
	 centralizer $B_3.$ In the Langlands classification, the
	 standard module is induced from the Steinberg representation
	 on $\wti A_1,$ so as a $W$-module, it is
	 $X((A_3+A_1)'')|_W=\text{Ind}_{W(\wti A_1)}^{W(F_4)}((11)).$

The restrictions of nearby $W$-types are:

\begin{tabular}{llllll}
Nilpotent &$(A_3A_1)''$ &$A_32A_1$ &$D4(a_1)A_1$ &$D_4(a_1)A_1$ &$A_5''$\\
$W$-types &$4_2$ &$8_3$ &$9_1$ &$2_1$ &$8_1$\\
$\wti A_1\subset \wti A_2$ &$(21)$ &$2\cdot (21)$ &$3\cdot (21)$
&$(21)$ &$3\cdot (21),(1^3)$\\
$A_1$ &$(2)$ &$(2),(11)$ &$2\cdot (2),(11)$ &$(2)$ &$3\cdot(2),(11)$\\
$A_1\subset C_2$ &$1\times 1$ &$2(1\times 1)$ &$2(1\times 1),11\times
0$ &$11\times 0$ &$2(1\times 1), 2(11\times 0)$\\
$W(\fz)$ &$3\times 0$ &$12\times 0$ &$2\times 1$ &$0\times 3$
&$3\times 0+1\times 2$
\end{tabular}

We need a calculation from the Hecke algebra of type $C_2$,
2$\Leftarrow$1. The infinitesimal character is $(1+\nu,-1+\nu)$, and
the standard module is induced from the Steinberg representation on
$\wti A_1$. Then the reducibility points are $\nu=\frac 12,\frac 32,$
and the operators are $$1\times 1: 1,\quad 11\times 0: \frac {\frac
  12-\nu}{\frac 12+\nu}.$$

It follows that the reducibility hyperplanes for
$X((A_3+A_1)'',(\nu_1,\nu_2))$ are $\nu_i=1,3$ (from $A_1\subset
C_2$), $\nu_i\pm\nu_j=1$ (from $\wti A_1\subset \wti A_1+A_1$), and
$\nu_1\pm\nu_2\pm\nu_3=6$ (from $\wti A_1\subset \wti A_2$).

The matching of operators is with the spherical operators for
$\bH(B_3)$,  the
Hecke algebra of type $B_3$ with equal parameters. From the tables, we
see that we can match the operators on all relevant $W(B_3)$-types,
except $1\times 2.$ (The operator on $8_1$ fails to match it because
of the restriction to $\wti A_1\subset \wti A_2.$) The representation
$1\times 2$ in $B_3$ is the only one which rules out the interval
$\{(1+\nu,\nu,-1+\nu):1<\nu<2\}$ in the spherical dual. It follows
that the unitary set $\CU((A_3A_1)'')$ is included in the union
$\CS\CU(\bH(B_3))\cup\{(1+\nu,\nu,-1+\nu):1<\nu<2\}.$ We also note that the
extra hyperplanes of reducibility of $X((A_3+A_1)'')$ which do not
correspond to $B_3$, intersect this union only in the point $(3,2,1).$ 
We analyze the parameters in
$\CU((A_3A_1)'')$ partitioned by the nilpotent orbits in type $\fk{so}(7)$
(that is, in the same way we parameterize the spherical dual of $\bH(B_3)$).

$(1^7):$ The parameters are $(\nu_1,\nu_2,\nu_3)$; the spherical
complementary series for $\bH(B_3)$ is 
\begin{equation}\label{eq:CS(A_3A_1)}
\{\nu_1+\nu_2<1\}\cup\{\nu_1+\nu_3>1,\nu_2+\nu_3<1,\nu_1<1\}.
\end{equation}
In these regions, $X((A_3+A_1)'')$ is irreducible, and it is unitarily
induced and unitary for $\nu_3=0$, so it is unitary in
(\ref{eq:CS(A_3A_1)}).

$(221^3):$ The parameters are $(\frac 12+\nu_1,-\frac
12+\nu_1,2\nu_2)$, unitary for $\{0\le\nu_1<\frac 12,0\le\nu_2<\frac 12\},$
being endpoints of the complementary series (\ref{eq:CS(A_3A_1)}). The
decomposition of the standard module in this region is
\begin{align}\notag
X((A_3A_1)'',\frac 12+\nu_1,-\frac 12+\nu_1,2\nu_2))&=\overline X((A_3A_1)'',\frac 12+\nu_1,-\frac 12+\nu_1,2\nu_2))\\\notag&+X(A_3+2A_1,(\nu_1,\nu_2)).
\end{align}
The $IM$-dual of $\overline X((A_3A_1)'',\frac 12+\nu_1,-\frac
12+\nu_1,2\nu_2))$ is parameterized by $4A_1.$

$(31^4):$ The parameters are $(\nu_1+\nu_2,\nu_1-\nu_2,1)$, unitary for
$\{\nu_1<\frac 12\},$ being endpoints of  (\ref{eq:CS(A_3A_1)}). The
decomposition of the standard module is
\begin{align}\notag
X((A_3A_1)'',(\nu_1+\nu_2,\nu_1-\nu_2,1))&=\overline X((A_3A_1)'',(\nu_1+\nu_2,\nu_1-\nu_2,1))\\\notag&+X(D_4(a_1)A_1,(\nu_1,\nu_2)).
\end{align}
The $IM$-dual of $\overline X((A_3A_1)'',(\nu_1+\nu_2,\nu_1-\nu_2,1))$
is a spherical module.

$(322):$ The parameters are $(\frac 12+\nu,-\frac 12+\nu,1),$ unitary
for $0\le\nu<\frac 12$ (again endpoints of (\ref{eq:CS(A_3A_1)}). The
standard module decomposes as
\begin{align}\notag
X((A_3A_1)'',(\frac 12+\nu,-\frac 12+\nu,1))&=\overline
X((A_3A_1)'',(\frac 12+\nu,-\frac 12+\nu,1))\\\notag
&+X(A_3+2A_1,(\nu,\frac 12)+X(D_4(a_1)A_1,(\nu,\frac 12))
\end{align}
Moreover, $\overline X((A_3A_1)'',(\frac 12+\nu,-\frac 12+\nu,1))$ is
self $IM$-dual.

$(331):$ The parameters are $(1+\nu,\nu,-1+\nu).$ At $\nu=0,$ we have
a unitary module, endpoint of (\ref{eq:CS(A_3A_1)}). (Note also that
$\overline X((A_3A_1)'',(1,1,0))$ is the $IM$-dual of $\overline
X(A_3+2A_1,(\frac 12,\frac 12)).$) In $\bH(B_3)$,
this is the only such parameter which is unitary. In $F_4$ however, by
the observations preceding this analysis, we need to also consider the
segment $1<\nu<2.$ In this interval, the decomposition is 
\begin{align}\notag
X((A_3+A_1)'',(1+\nu,&\nu,-1+\nu))=\overline
X((A_3+A_1)'',(1+\nu,\nu,-1+\nu))\\\notag
&+2\cdot \overline X(A_3+2A_1,(-\frac
12+\nu,\frac 12+\frac {\nu}2))+X(A_3A_2A_1,\frac \nu 2)\\\notag
\overline X((A_3+A_1)'',(1+\nu,&\nu,-1+\nu))|_W=4_2+9_1+2_1+6_2+2\cdot
8_1+4_3\\\notag
&+2\cdot 9_2+12_1+16_1+8_4+1_2.
\end{align}
This segment is isolated, so to prove that it is unitary, we compute
explicitly the signatures on all the $W$-types which appear in the
restriction of $\overline X((A_3+A_1)'',(1+\nu,\nu,-1+\nu))|_W.$ The
$IM$-dual of $\overline X((A_3+A_1)'',(1+\nu,\nu,-1+\nu))$ is a module
parameterized by $A_2+3A_1$, and our calculations will be confirmed by
the unitarity of those duals.

$(511):$ The parameters are $(\nu,2,1)$, which are not unitary for
$\nu>0.$ At $\nu=0,$ $\overline X((A_3A_1)'',(2,1,0))$ is the $IM$-dual
of $\overline X(A_3+2A_1,(\frac 12,1))$, and therefore it is unitary.

$(7):$ The parameter is $(3,2,1).$ Then $\overline
X((A_3+A_1)'',(3,2,1))$ is the $IM$-dual of $X(E_7(a_5),\phi_2)$, and
therefore it is unitary.

\medskip

In conclusion, $\CU((A_3+A_1)'')=\CS\CU(\bH(B_3))\sqcup \{(1+\nu,\nu,-1+\nu):1<\nu<2\}.$

\bigskip

\noindent$\mathbf{A_2+3A_1}:$ The infinitesimal character is
$(\frac 12,-\frac 12,-\frac 12,\frac
12)+\nu_1(2,1,1,0)+\nu_2(1,1,0,0),$ the lowest $W$-type is $1_3,$ and
$\fz=G_2.$ In terms of Langlands classification, the standard module
is induced from the Steinberg representation on $A_2,$ and as a
W-module, $X(A_2+3A_1)=\text{Ind}_{W(A_2)}^{W(F_4)}((111)).$

The restrictions of nearby $W$-types are:

\noindent\begin{tabular}{llllll}
Nilpotent &$A_23A_1$ &$A_32A_1$ &$A_3A_2A_1$ &$D_4A_1$ &$A_5A_1$\\
$W$-type &$1_3$ &$8_3$ &$4_4$ &$9_3$ &$6_1$\\
Multiplicity &$1$ &$1$ &$2$ &$3$ &$1$\\
$A_2\subset B_3$ &$111\times 0$ &$11\times 1$ &$111\times 0$ &$111\times 0,1\times 11$ &$11\times 1$\\
&&&$11\times 1$ &$11\times 1$\\
$A_2\subset A_2+\wti A_1$ &$(2)$ &$(2)$ &$(2),(11)$ &$2\cdot(2),(11)$ &$(11)$\\
$W(\fz)$ &$(1,0)$ &$(1,3)'$ &$(2,1)$ &$(1,0)+(2,2)$ &$(1,6)$
\end{tabular}  

We need a calculation in the Hecke algebra $\bH(B_3,1,2)$. The
infinitesimal character is $(\nu+1,\nu,-1+\nu)$, and the standard
module is induced from the Steinberg representation on $A_2.$ The
reducibility points are $1,2,3,$ and the relevant operators are
$$111\times 0: 1,\quad 11\times 1:\frac {1-\nu}{1+\nu},\quad 1\times
11: \frac{(1-\nu)(2-\nu)}{(1+\nu)(2+\nu)}.$$

Then the reducibility lines for $X(A_2+3A_1,(\nu_1,\nu_2))$ are
$\nu_1,\nu_1+\nu_2,2\nu_1+\nu_2=1,2,3,$ and
$\nu_2,3\nu_1+\nu_2,3\nu_1+2\nu_2=2.$ The matching of operators is
with $\bH(G_2,1,2),$ the Hecke algebra of type $G_2$:
1$<$$\equiv$2. From the tables, it follows that we are able to match
only the operators on $(1,0)$, $(1,3)'$, and $(2,1)$ in $G_2$. We will
also use the operator on $9_3$ (which is the closest one to match
$(2,2)$). The spherical unitary dual of $\bH(G_2,1,2)$ consists of the
closed region $\{2\nu_1+\nu_2\le 1\}$, and the isolated points $(\frac
12,\frac 12)$ and $(1,2).$ The matched operators are sufficient to
conclude that, when irreducible,
$X(A_2+3A_1,(\nu_1,\nu_2))$ is  unitary only in the
region $2\nu_1+\nu_2<1$, but on the lines we need more information.

On the line $\nu_1=-1+\nu,\nu_2=2,$ the decomposition is 
\begin{align}\notag
X(A_2+3A_1,(-1+\nu,2))&=\overline
X(A_2+3A_1,(-1+\nu,2))+X(A_3+A_2+A_1,\nu)\\\notag
\overline X(A_2+3A_1,(-1+\nu,2))|_W&=1_3+8_3+4_4+6_2+16_1+2\cdot
8_2+2\cdot 9_3\\\notag
&+12_1+9_4+2_2+4_5.
\end{align}
To simplify notation, set $X_1(\nu)=X(A_2+3A_1,(-1+\nu,2)).$
The matched operators give $$8_3: -\frac {(\frac
  12-\nu)(2-\nu)}{(\frac 12+\nu)(2+\nu)},\quad 4_4: \frac {2-\nu}{2+\nu},$$
showing that $X_1(\nu)$ can only be unitary for $\frac 12\le\nu\le 2.$
The operator on $9_3$ has two nonzero eigenvalues with product $-\frac
  {(\frac 12-\nu)(1-\nu)(2-\nu)^2(3-\nu)}{(\frac
  12+\nu)(1+\nu)(2+\nu)^2(3+\nu)},$ so the only segment which can be
  unitary is $\frac 12\le\nu\le 1.$ At $\nu=\frac 12,$ $X_1(\nu)$
  decomposes further:
\begin{align}\notag
&X_1(\frac 12)=\overline X(A_2+3A_1,(\frac 12,\frac 12))+\overline
X(A_3+2A_1,(\frac 12,\frac 12))\\\notag
&\overline X(A_2+3A_1,(\frac 12,\frac 12))|_W=1_3+4_4+8_2+9_3+2_2.
\end{align}
Since $X(A_2+3A_1,(\frac 12,\frac 12))$ is the $IM$-dual of
$X(D_4(a_1)A_1,\phi_2)$ (which is tempered), it must be unitary. Also
the other factor is unitary, because $\overline
X(A_3+2A_1,(\frac 12,\frac 12))$ is. Since $9_2$ appears in both
factors, and the operator on it is positive for $\frac 12<\nu<1,$ it
follows that $X_1(\nu)$ is unitary for $\frac 12<\nu<1.$ Note also
that $X_1(\nu)$ is the $IM$-dual of $\overline
X((A_3+A_1)'',1+2\nu,2\nu,-1+2\nu),$ which confirms its unitarity.

\medskip

On the line $\nu_1=1,\nu_2=-\frac 32+\nu,$ the decomposition is 
\begin{align}\notag
X(A_2+3A_1,(1,-\frac 32+\nu))&=\overline X(A_2+3A_1,(1,-\frac
32+\nu))+\overline X(A_3+2A_1,(\frac 12,\nu))\\
\overline X(A_2+3A_1,(1,-\frac 32+\nu))|_W&=1_3+4_4+8_2+9_3+2_2.
\end{align}
Denote $X_2(\nu)=\overline X(A_2+3A_1,(1,-\frac 32+\nu)).$ The
intertwining operators calculations give us $$4_4:\frac {\frac
  72-\nu}{\frac 72+\nu},\quad 9_3:\frac {(\frac 52-\nu)(\frac
  72-\nu)}{(\frac 52+\nu)(\frac 72+\nu)}.$$ So the only interval on
which $X_2(\nu)$ can be unitary is $0\le\nu\le\frac 52.$ But on this
interval, $X_2(\nu)$ is the $IM$-dual of
$X(D_4(a_1)A_1,(\nu,\nu),\phi_2)$, so it is in fact unitary.
Note that at the endpoint, $\nu=\frac 52$, $\overline
X(A_2+3A_1,(1,1))$ is the $IM$-dual of the discrete series
$X(E_7(a_4),\phi_2).$

\medskip

Finally, at the point $(1,2)$, $\overline X(A_2+3A_1,(1,2))$ is the
$IM$-dual of the discrete series $X(E_7(a_3),\phi_2).$

\medskip

In conclusion, the unitary set $\CU(A_2+3A_1)$ is the one pictured in
the figure \ref{fig:A23A1}. It is strictly larger than the spherical
unitary dual $\CS\CU(\bH(G_2,1,2)).$

\begin{figure}[h]
\input{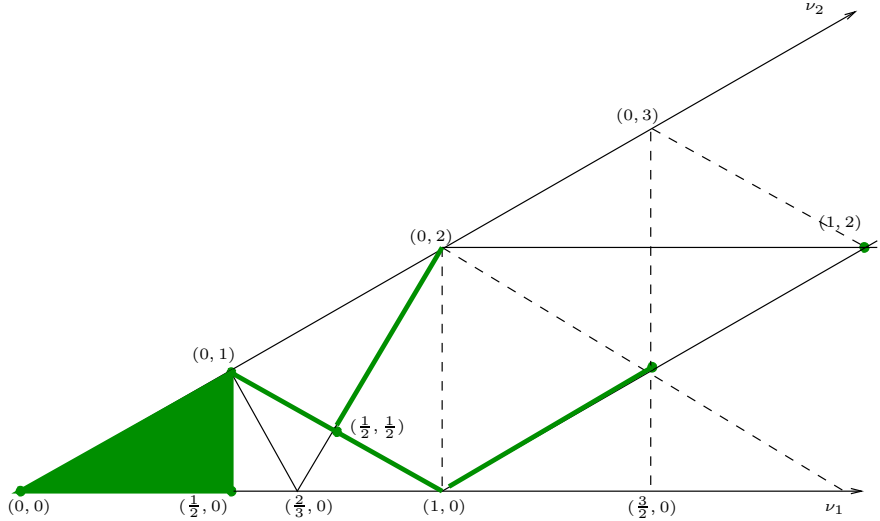}
\caption{Unitary parameters and reducibility lines for $A_2+3A_1$}\label{fig:A23A1}
\end{figure}

\bigskip

\noindent$\mathbf{4A_1}:$ The parameters are
$(\nu_1,\nu_2,\nu_3,\frac 12),$ the lowest $W$-type is $2_3$, and
$\fz=C_3.$ In terms of Langlands classification, the standard module
is induced from the Steinberg representation on $A_1,$ and as a
W-representation, $X(4A_1)=\text{Ind}_{W(A_1)}^{W(F_4)}((11)).$

The restrictions of the nearby $W$-types are:

\noindent\begin{tabular}{lllllll}
Nilpotent &$4A_1$ &$(A_3A_1)''$ &$A_23A_1$ &$A_32A_1$ &$D_4(a_1)A_1$
&$A_3A_2A_1$ \\
$W$-type &$2_3$ &$4_2$ &$1_3$ &$8_3$ &$9_1$ &$4_4$ \\
Multiplicity &$1$ &$1$ &$1$ &$4$ &$3$ &$3$ \\
$C_1\subset C_2$ &$0\times 2$ &$1\times 1$ &$0\times 2$ &$2(0\times 2)$ &$0\times
2$ &$2(0\times 2)$ \\
&&&&$2(1\times 1)$  &$2(1\times 1)$ &$1\times 1$\\
$A_1\subset A_2$ &$(21)$ &$(21)$ &$(1^3)$ &$3(21),(1^3)$ &$3(21)$
&$(21),2(1^3)$ \\
$\wti A_1$\\
$W(\fz)$ &$3\times 0$ &$0\times 3$ &$3\times 0$ &$0\times 3+2\times 1$
&$1\times 2$ &$2\times 1$ 
\end{tabular}

We need one calculation in the Hecke algebra $\bH(C_2,2,1)$ of type
$C_2$, 2$\Leftarrow$1. The standard module is induced from the
Steinberg representation on $C_1$, and it has infinitesimal character
$(\nu,\frac 12).$ The operators are
$$0\times 2: 1,\quad 1\times 1: \frac {\frac 32-\nu}{\frac
  32+\nu},\quad 0\times 11: \frac {(\frac 32-\nu)(\frac 52-\nu)}{(\frac 32+\nu)(\frac 52+\nu)}.$$

The matching of intertwining operators is with the spherical operators
for $\bH(C_3,2,3)$, the Hecke algebra of type $C_3$,
2--2$\Leftarrow$3. The only $W$-types which give matchings are

\begin{tabular}{ll}
$2_3$ &with $3\times 0$;\\
$4_2$ &with $0\times 3$;\\
$9_1$ &with $1\times 2$;\\
$8_1$ &with $0\times 12$.
\end{tabular}
 
The last one is irrelevant for the calculation. To get an inclusion
of $\CU(4A_1)$ into $\CS\CU(\bH(C_3,2,3))$, we would have needed to
find matchings for the $W(C_3)$-types $2\times 1$ and $12\times 0$ as
well. The $W(F_4)$-type $8_3$ fails to match $2\times 1$ because of
its restriction to $A_1\subset A_2,$ but we will have to use it in the
calculation nevertheless. We will also use the operators on $1_3$ and
$4_1.$ 

The hyperplanes of reducibility for $X(4A_1),(\nu_1,\nu_2,\nu_3))$
are: $\nu_i\pm\nu_j=2,$ $\nu_i=\frac 32$ (as in $\bH(C_3,2,3)$), but
also $\nu_i=\frac 52$, and $\nu_1\pm\nu_2\pm\nu_3=\frac 32.$ Especially
the second extra family of hyperplanes of reducibility will affect
the unitarity in an essential way.

The Hecke algebra $\bH(C_3,2,3)$ arises geometrically from a
cuspidal local system on $SL(3)^3$ in $Spin(13)$; its dual is parametrized by
a subset of the nilpotent orbits in type $B_6.$. We organize our
analysis of the unitary set $\CU(4A_1)$ by infinitesimal characters
corresponding to these orbits.

$(2^61):$ In $\bH(C_3,2,3),$ these are parameters
$(\nu_1,\nu_2,\nu_3)$ in the complement of the hyperplanes
$\nu_i\pm\nu_j=2,$ $\nu_i=\frac 32.$ In $\bH(C_3,2,3)$ the unitary
regions are:

(i) $\{\nu_1<\frac 32,\nu_1+\nu_2<2\}.$

(ii) $\{\nu_1<\frac 32,\nu_1+\nu_3>2,\nu_2+\nu_3<2\}.$

But the operators on the matched $W(C_3)$-types $3\times 0$, $0\times
3$, $1\times 2$ (equivalently, on the $W(F_4)$-operators $2_3$, $4_2$,
respectively $9_1$) are also positive in the region

(iii) $\{\nu_3<\frac 32,\nu_2>\frac
  32,\nu_1+\nu_3>2,\nu_1-\nu_3<2,\nu_2+\nu_3<2\}.$ 

In $F_4$, $X(4A_1)$ becomes reducible also on the hyperplanes
$\nu_1\pm\nu_2\pm\nu_3=\frac 32,$ some of which cut the regions
(i)--(iii). More specifically, region (i) is cut by the hyperplanes
$\nu_1+\nu_2+\nu_3=\frac 32$ and $\nu_1+\nu_2-\nu_3=\frac 32,$ and region
(ii) is cut by $\nu_1+\nu_2-\nu_3=\frac 32.$ We use the operators on
$1_2$ and $8_4$ and these are indefinite in all the resulting
(sub)regions except
\begin{equation}\label{eq:CS(4A1)}
\{\nu_1+\nu_2+\nu_3\le\frac 32\}\cup\{\nu_1+\nu_2-\nu_3\ge \frac
32,\nu_1<\frac 32,\nu_1+\nu_2<2\}.
\end{equation}
In these two remaining regions, one can deform $\nu_3$ to $0$, and
$X(4A_1)$ stays irreducible, and for $\nu_3=0$, it is unitarily
induced and unitary. So the parameters in (\ref{eq:CS(4A1)}) are
unitary. On the hyperplanes $\nu_1+\nu_2\pm\nu_3=\frac 32,$ in
(\ref{eq:CS(4A1)}), the decomposition is 
\begin{align}\notag
X(4A_1)=\overline X(4A_1)+X(A_2+3A_1),
\end{align}
and the factor $\overline X(4A_1)$ is self $IM$-dual.

$(52^4):$ The parameter is $(\nu_1,\nu_2,\frac 32).$ In
$\bH(C_3,2,3)$, the corresponding parameters are unitary for
$\{\nu_1<\frac 12\}\cup\{\nu_2>\frac 12,\nu_1+\nu<2\}.$ However, out of
  the matched operators, the only nontrivial one is on $9_1$, and it
  is $\frac {(\frac 72-\nu_1)(\frac 72-\nu_2)}{(\frac 72+\nu_1)(\frac
    72+\nu_2)}.$ We also need the operators on $1_3$ and $4_1$:
$$1_3:\left\{\begin{matrix}\frac
  {(3-(\nu_1-\nu_2))(3-(\nu_1+\nu_2))}{(3+(\nu_1-\nu_2))(3+(\nu_1+\nu_2))},
  &\nu_1\neq\nu_2\\ 0, &\nu_1=\nu_2\end{matrix}\right.,\ 4_1:
  \frac {(\frac 52-\nu_1)(\frac 52-\nu_2)(\frac 72-\nu_1)(\frac 72-\nu_2)}{(\frac 52+\nu_1)(\frac 52+\nu_2)(\frac 72+\nu_1)(\frac 72+\nu_2)}.$$
From these calculations, it follows that for $\nu_1\neq\nu_2,$ the
  only unitary parameters are in the region $\{\nu_1+\nu_2<
  3,\nu_1< \frac 52\}$ (and its closure). In this region, the
  decomposition of the standard module is
\begin{align}\notag
X(4A_1,(\nu_1,\nu_2,\frac 32))&=\overline X(4A_1,(\nu_1,\nu_2,\frac
32))+\overline X((A_3A_1)'',(\nu_1+\nu_2,\nu_1-\nu_2,1))\\\notag
&+X(D_4(a_1)A_1,(\nu_1,\nu_2)).
\end{align}
Moreover, $\overline X(4A_1,(\nu_1,\nu_2,\frac 32))$ is the $IM$-dual of
$X(D_4(a_1)+A_1),$ so it is indeed unitary.

When $\nu_1=\nu_2=\nu$, the decomposition is
\begin{align}\notag
X(4A_1,(\nu,\nu,\frac 32))&=\overline X(4A_1,(\nu,\nu,\frac
32))+\overline X((A_3A_1)'',(2\nu,1,0))\\\notag
&\overline X(A_2+3A_1,(1,-\frac 32+\nu))+X(D_4(a_1)A_1,(\nu,\nu)).
\end{align}
The factor $\overline X(4A_1,(\nu,\nu,\frac 32))$ is the $IM$-dual of
$X(D_4(a_1)A_1,(\nu,\nu),phi_1)$, so it is unitary if and only if
$0\le\nu\le\frac 32.$

$(44221):$ The parameters are $(\nu_1,1+\nu_2,-1+\nu_2).$ In
$\bH(C_3,2,3),$ they are unitary in the region $\{\nu_1<\frac
32,\nu_2<\frac 12\}.$ In $F_4$, the unitary set will be
different. Generically, here the standard module
decomposes as 
\begin{equation}\notag
X(4A_1,(\nu_1,1+\nu_2,-1+\nu_2))=\overline X(4A_1,(\nu_1,1+\nu_2,-1+\nu_2))+X(A_3+2A_1,(\nu_1,\nu_2)),
\end{equation}
and $\overline X(4A_1)$ is the $IM$-dual of $\overline
X((A_3+A_1)'',(\frac 12+\nu_1,-\frac 12+\nu_1,2\nu_2)),$ which is
unitary for $\{0\le\nu_1<\frac 12,0\le\nu_2<\frac 12\}.$

Similar calculations as in the previous case ($(52^4)$) show that the
parameters $(\nu_1,1+\nu_2,-1+\nu_2)$ can be unitary only for
$\{0\le\nu_1<\frac 12,0\le\nu_2<\frac 12\}$ (by the remark in the
previous paragraph, $\overline X(4A_1,(\nu_1,1+\nu_2,-1+\nu_2))$ has
to be unitary in this region), and on the segment
$\nu_1+2\nu_2=\frac 32,$ for $\frac 12\le\nu_1\le\frac 32.$ For this,
we use, in addition to the matched $W$-types, the operators on
$1_3,4_1,8_3.$

It remains to analyze the segment $\nu_1+2\nu_2=\frac 32.$ We rewrite
the parameters as $\nu_1=\frac 12+2\nu,\nu_2=\frac 12-\nu.$ Then the
factor $\overline X(4A_1,(\frac 12+2\nu,\frac 12+\nu,-\frac
32+\nu))$ is the $IM$-dual of $\overline X(A_3+2A_1,(-\frac 12+\nu,\frac
12+\nu))$, which is unitary for $0\le\nu<\frac 12$, which is precisely
the segment we were looking at. Therefore, this segment is also
unitary for $\overline X(4A_1).$

$(53221):$ The infinitesimal character is $(\nu,\frac 32,\frac 12).$
In $\bH(C_3,2,3)$, the corresponding spherical module is unitary for
$0\le\nu<\frac 32.$ 

In $F_4$, the only nonzero matched operator is on $9_1$, and it is
$\frac {\frac 72-\nu}{\frac 72+\nu}.$ The operator on $1_3$ is $\frac
{(\frac 52-\nu)(\frac 72-\nu)}{(\frac 52+\nu)(\frac 72+\nu)}.$ So it
remains to check the segment $0\le\nu\le\frac 52.$ For
$0\le\nu\le\frac 52,$ $\overline X(4A_1,(\nu,\frac 32,\frac 12))$ is
the $IM$-dual of $X(D_4(a_1)A_1,(\nu,\frac 12))$, so it is indeed unitary.

$(544):$ The infinitesimal character is $(1+\nu,-1+\nu,\frac 32).$ In
$\bH(C_3,2,3)$, the corresponding spherical module is unitary for
$0\le\nu<\frac 32.$ In $F_4$, the matched operator on $9_4$ gives $\frac {(\frac
  52-\nu)(\frac 92-\nu)}{(\frac
  52+\nu)(\frac 92+\nu)}.$ The operator on $1_2$ is $\frac {\frac
  32-\nu}{\frac 32+\nu},$ which implies the only unitary parameters
can be in $0\le\nu\le\frac 32.$ For $0\le\nu<\frac 32,$ $\overline
X(4A_1,(1+\nu,-1+\nu,\frac 32))$ is the $IM$-dual of
$X(D_4(a_1)A_1,(1+\nu,-1+\nu))$, so it is unitary.

$(661):$ The infinitesimal character is $(2+\nu,\nu,-2+\nu).$ In
$\bH(C_3,2,3)$, the corresponding spherical module is unitary for
$0\le\nu<\frac 12.$ In $F_4$, the matched operators give $4_5:\frac
{(\frac 12+\nu)(\frac 32-\nu)(\frac 72-\nu)}{(\frac 12-\nu)(\frac
  32+\nu)(\frac 72+\nu)},$ respectively $9_4:\frac {(\frac
  32-\nu)(\frac 72-\nu)}{ (\frac
  32+\nu)(\frac 72+\nu)}.$ For $0\le\nu<\frac 12,$ the factor
$\overline X(4A_1,(2+\nu,\nu,-2+\nu))$ is the $IM$-dual of
$X((A_5)'',(1,-\frac 12+\nu))$, therefore it is unitary.

$(751):$ The infinitesimal character is $(\frac 52,\frac 32,\frac
12)$, which is unitary in $\bH(C_3,2,3).$ In $F_4$, $\overline
X(4A_1,(\frac 52,\frac 32,\frac 12))$ is unitary as well, being the
$IM$-dual of $\overline X(D_6(a_1),\frac 12).$

$(922):$ The infinitesimal character is $(\nu,\frac 72,\frac 32),$
which in $\bH(C_3,2,3)$ is unitary for $0\le\nu<\frac 12.$ In $F_4$,
the matched operators are $0$, but the operators on $1_2$ and $8_4$ are $\frac
{(\frac 12+\nu)(\frac {13}2-\nu)}{(\frac 12-\nu)(\frac {13}2+\nu)}$,
respectively $\frac {\frac {13}2-\nu}{\frac {13}2+\nu}.$ 

For $0\le\nu<\frac 12$, the factor $\overline X(4A_1,(\nu,\frac
72,\frac 32))$ is the $IM$-dual of $X(D_6(a_1),\nu)$, and therefore
unitary. But also the point $\nu=\frac {13}2$ is unitary, since
$\overline X(4A_1,(\frac {13}2,\frac 72,\frac 32))$ is the $IM$-dual of
$X(E_7(a_1))$. (This point has no correspondent in $\bH(C_3,2,3)$.) 

$(931):$ The infinitesimal character is $(\frac 72,\frac 32,\frac
12)$, which is unitary in $\bH(C_3,2,3).$ It is also unitary in $F_4$,
as $\overline X(4A_1,(\frac 72,\frac 32,\frac 12))$ is the $IM$-dual of
$\overline X(D_6(a_1),\frac 12).$

$(13):$ The infinitesimal character is $(\frac {11}2,\frac 72,\frac
32).$ In $\bH(C_3,2,3)$ the corresponding spherical module is the
trivial representation. But $\overline X(4A_1,(\frac {11}2,\frac 72,\frac
32))$ is not unitary as seen from the operators for $(922)$ at
$\nu=\frac {11}2.$

\subsection{Spherical modules}\label{sec:5.2}

The spherical modules are parameterized by the nilpotent orbit
$(3A_1)''.$ If a spherical module does not contain the sign
$W$-representation, via $IM$ its unitarity was already determined in
the previous section. We record those results next.

\begin{small}
\begin{center}
\begin{longtable}{|c|c|c|}
\caption{Spherical unitary modules for $\bH(F_4,(1,2))$}
\label{table:5.2}\\
\hline
\multicolumn{1}{|c|}{\bf Nilpotent} &\multicolumn{1}{c|}{\bf Central
  character} &\multicolumn{1}{c|}{\bf Unitary}\\\hline
\endfirsthead
\multicolumn{3}{c}%
{{ \tablename\ \thetable{} -- continued from previous page}}
\\
\endhead
\hline\hline
\endlastfoot

$E_7$ &$(\frac {17}2,\frac 92,\frac 52,\frac 12)$ & \\
\hline
$E_7(a_2)$ &$(\frac {11}2,\frac 52,\frac 32,\frac 12)$ & \\
\hline
$D_6$ &$(\nu,\frac 92,\frac 52,\frac 12)$ &$\{0\le\nu\le\frac 12\}$\\
\hline
$D_5+A_1$ &$(2+\nu,-2+\nu,\frac 52,\frac 12)$ &$\{0\le\nu\le\frac 12\}$\\
\hline
$E_7(a_5)$ &$(\frac 52,\frac 32,\frac 12,\frac 12)$ &\\
\hline
$D_6(a_2)$ &$(\nu,\frac 52,\frac 32,\frac 12)$ &$\{0\le\nu<\frac 12\}$\\
\hline
$A_5+A_1$ &$(\frac 14,\frac 74,-\frac 14,-\frac 94)+\nu(\frac 32,\frac
12,\frac 12,\frac 12)$ &$\{0\le\nu<\frac 12\}$\\
\hline
$(A_5)"$ &$(\nu_2+\frac {3\nu_1}2+2+\frac {\nu_1}2,\frac
{\nu_1}2,-2+\frac {\nu_1}2)$
&$\{3\nu_1+2\nu_2<1\}$ \\
&&$\{2\nu_1+\nu_2<1<3\nu_1+\nu_2\}$ \\
\hline
$D_4+A_1$ &$(\nu_1,\nu_2,\frac 52,\frac 12)$
&$\{\nu_1+\nu_2<1\}$\\
&&$\{\nu_1-\nu_2>1,\nu_1<\frac 32\}$\\
    &$(\frac 12+\nu,-\frac 12+\nu,\frac 52,\frac 12)$ &$\{0\le\nu<\frac
    32\}$\\
\hline
$A_3A_2A_1$ &$(\frac 12,\frac 12,-\frac 32,\frac 12)+\nu(2,1,1,0)$
&$\{0\le\nu<1\}$\\
\hline
$A_3+2A_1$ &$(\nu_1,1+\nu_2,-1+\nu_2,\frac 12)$ &$\{0\le\nu_1<\frac
12,\nu_1+2\nu_2<\frac 32\}$\\
&&$\{0\le\nu_1<\frac 12,0\le\nu_2<\frac 32,2\nu_2-\nu_1>\frac 32\}$ \\
   &$(1+\nu,-1+\nu,\frac 12,\frac 12)$ &$\{0\le\nu_1<\frac 12\}\cup
   \{1<\nu<\frac 32\}$ \\
\hline
$(A_3+A_1)"$ &$(\frac {\nu_1+\nu_2}2,\frac {\nu_1-\nu_2}2,1+\frac
{\nu_3}2,-1+\frac{\nu_3}2)$
&$\{0\le\nu_3\le\nu_2\le\nu_1<1-\nu_2\}$\\
&&$\{0\le\nu_3\le\nu_2<1-\nu_3<\nu_1<1\}$ \\ 
&$(\nu_1,\nu_2,\frac 32,\frac 12)$ &$\{0\le\nu_2\le\nu_1<\frac 12\}$\\
\hline
$A_2+3A_1$ &$(\frac 12,-\frac 12,-\frac 12,\frac
12)+\nu_1(2,1,1,0)$ &$\{2\nu_1+\nu_2<1\}$\\ 
& $+\nu_2(1,1,0,0)$ &\\
\hline
$4A_1$ &$(\nu_1,\nu_2,\nu_3,\frac 12)$ &$\{\nu_1+\nu_2+\nu_3<\frac
32\}$\\
&&$\{\nu_1+\nu_2-\nu_3>\frac 32,\nu_1<\frac 32,\nu_1+\nu_2<2\}$\\
\hline
$(3A_1)''$ &$(\nu_1,\nu_2,\nu_3,\nu_4)$ &regions $\C F_1-\C F_5$ in
section \ref{sec:5.5}\\
\hline
\end{longtable}
\end{center}
\end{small}

\subsection{The $0$-complementary series}\label{sec:5.5} In this section we determine
the unitary irreducible spherical series $X(\nu).$ The parameter
$\nu=(\nu_1,\nu_2,\nu_3,\nu_4)$ is assumed in the dominant
Weyl chamber $\C C.$ This is partitioned by the hyperplanes 
\begin{equation}\label{eq:5.3.1}
\langle\al_l,\nu\rangle=1,\ \al_l \text{ long root and
}\langle\al_s,\nu\rangle=c, \al_s \text{ short root.}
\end{equation} 
We assume first that $c>1$ is arbitrary.  The principal series
$X(\nu)$ is reducible precisely when $\nu$ is on one of the
hyperplanes in (\ref{eq:5.3.1}).  If $\C F$ is an open connected component
of the complement of (\ref{eq:5.3.1}) in $\C C$ (we call $\C F$ a {\it
  region}), then all the intertwining operators $r_\sigma(\nu),$ $\sigma\in
\widehat W$, are invertible, and therefore have constant signature in
$\C F$. We
say that the region $\C F$ is unitary if $X(\nu)$ is unitary for all
(equivalently, any) $\nu\in \C F.$ The walls of any region $\C F$ are
of the form (\ref{eq:5.3.1}), or of the form
$\langle\al,\nu\rangle=0,$ for $\al\in\Pi.$ 

\begin{proposition}\label{p:5.3}
Consider the half-space $K=\{\nu:\langle\ep_1+\ep_2,\nu\rangle<c\}.$
The unitary regions $\C F$ in $\C C\cap K$ are:

\begin{enumerate}

\item[($\C F_1$)] $\{2\nu_1<1,\ \nu_1+\nu_2<c\};$

\item[($\C F_2$)] $\{\nu_1+\nu_2+\nu_3+\nu_4>1,\ \nu_1+\nu_2+\nu_3-\nu_4<1,\
  \nu_1+\nu_2<c\};$ 

\item[($\C F_3$)] $\{\nu_1+\nu_2-\nu_3+\nu_4>1,\ \nu_1-\nu_2+\nu_3+\nu_4<1,\
  \nu_1+\nu_2-\nu_3-\nu_4<1,\ \nu_1+\nu_2<c\};$

\item[($\C F_4$)] $\{\nu_1-\nu_2+\nu_3+\nu_4<1,\ 2\nu_2>1,\ \nu_1+\nu_2<c\};$

\item[($\C F_5$)] $\{2\nu_2>1,\ \nu_1-\nu_2+\nu_3-\nu_4>1,\ 2\nu_3<1,\
  \nu_1-\nu_2-\nu_3+\nu_4<1,\ \nu_1+\nu_2<c\};$

\item[($\C F_6$)]  $\{2\nu_2>1,\ \nu_1-\nu_2-\nu_3-\nu_4>1,\ 2\nu_3<1,\ \nu_1+\nu_2<c\};$

\item[($\C F_7$)] $\{\nu_1-\nu_2-\nu_3-\nu_4>1,\ 2\nu_4>1,\ \nu_1+\nu_2<c\}.$

\end{enumerate}

\end{proposition}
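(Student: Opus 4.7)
The plan is to follow the standard template for determining the $0$-complementary series of a graded Hecke algebra. By proposition \ref{p:4.2} together with the factorization (\ref{eq:4.2.2}), each spherical intertwining operator $r_\mu(\nu)$ is invertible and of constant signature on any open connected component of the complement of the reducibility hyperplanes (\ref{eq:5.3.1}) in $\C C$. Consequently, determining the $0$-complementary series reduces to (i) enumerating the open regions of $\C C\cap K$ and (ii) testing positivity of the finite collection $\{r_\mu(\nu)\}_{\mu\in\widehat{W(F_4)}}$ at a representative point of each region. The reducibility walls relevant to $K$ are the long-root walls $2\nu_i=1$ and $\nu_1\pm\nu_2\pm\nu_3\pm\nu_4=1$, together with those short-root walls $\nu_i\pm\nu_j=c$ and $\nu_i=c$ that meet $K$; intersecting with the dominance conditions $\nu_1\ge\nu_2+\nu_3+\nu_4$, $\nu_2\ge\nu_3\ge\nu_4\ge 0$ yields a finite list of regions from which the seven candidates $\C F_1,\dots,\C F_7$ must be extracted.

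To establish unitarity of each $\C F_j$, I would argue by continuous deformation from a known unitary source. The region $\C F_1$ contains the origin, where $X(0)\cong\bC[W]$ carries the canonical $W$-invariant positive-definite form, so $\C F_1$ is unitary. For each of $\C F_2,\dots,\C F_7$ the plan is to exhibit a distinguished wall of $\C F_j$ (a Weyl chamber wall, a reducibility wall, or a point where two walls meet) at which the spherical Langlands quotient is identifiable with a unitary module — either unitarily induced from a tempered unitary module on a proper $\bH_M$, or a limit of a known unitary family from sections \ref{sec:5.1}--\ref{sec:5.1b}. Since the signatures of all $r_\mu(\nu)$ are constant across the open region, the positivity at the source then propagates throughout $\C F_j$. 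The boundary conditions defining each $\C F_j$ are set up precisely so that no reducibility wall separating a unitary source from the region interior requires additional analysis.

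To rule out every open region of $\C C\cap K$ not in the list $\{\C F_1,\dots,\C F_7\}$, one displays, for each such region, a single $W$-type $\mu$ on which $r_\mu(\nu)$ has a strictly negative eigenvalue. The eigenvalues are products of elementary factors $(c_\alpha-\nu')/(c_\alpha+\nu')$ by (\ref{eq:4.2.2}), so sign tracking is a direct (if tedious) computation. In practice a small ``detecting set'' of $W(F_4)$-types — built from the $4$-, $8$-, and $9$-dimensional representations appearing in the matching arguments of section \ref{sec:5.1b} — suffices: proposition \ref{match} identifies their operators with spherical operators on smaller Hecke algebras of types $A_1,B_2,B_3,C_3,G_2$, where non-unitarity was already resolved above. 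Crossings of the ``extra'' short-root walls (e.g.\ $\nu_1\pm\nu_2=c$) and the interplay with long-root walls $\nu_1\pm\nu_2\pm\nu_3\pm\nu_4=1$ are handled by noting that these walls bound the regions $\C F_2,\dots,\C F_7$ exactly.

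The principal obstacle is combinatorial: with roughly 25 irreducibles of $W(F_4)$ and a dense family of reducibility hyperplanes cutting $\C C\cap K$, direct region-by-region bookkeeping is substantial. The author's framework of matching intertwining operators with spherical operators on Hecke algebras associated to centralizers of nilpotent orbits (proposition \ref{match}) is essential here, since it collapses the many regions into a much smaller set of equivalent ``spherical unitarity'' problems on lower-rank algebras, each of which has already been settled in the preceding subsections.
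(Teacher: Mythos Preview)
Your general template (constant signature on open regions, deform to boundaries, detect non-unitarity on a small set of $W$-types) is sound, but you have missed the one observation that makes the paper's proof short, and in doing so you have also introduced a small error.

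The key point is that $\ep_1+\ep_2$ is the \emph{highest short root}. Hence for $\nu$ dominant and $\nu_1+\nu_2<c$, \emph{every} short root satisfies $\langle\al_s,\nu\rangle<c$. So contrary to what you write, there are no short-root walls $\nu_i\pm\nu_j=c$ (and $\nu_i=c$ is not a root hyperplane at all in this realization) meeting the interior of $\C C\cap K$; the only walls are the long-root walls $2\nu_i=1$ and $\nu_1\pm\nu_2\pm\nu_3\pm\nu_4=1$, together with the chamber walls $\langle\al,\nu\rangle=0$. This cuts $\C C\cap K$ into exactly $19$ regions.

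The paper then observes (by a short case check) that \emph{each} of these $19$ regions has at least one wall of the form $\langle\al,\nu\rangle=0$ for a simple root $\al$. Deforming to that wall, $X(\nu)$ is irreducible and unitarily induced from a spherical principal series $X_M(\nu')$ for a maximal Levi $M$ of type $B_3$ or $C_3$. The spherical $0$-complementary series for Hecke algebras of type $B_n/C_n$ with \emph{arbitrary} unequal parameters is completely known from \cite{BC}, so the unitarity or non-unitarity of each of the $19$ regions is read off directly from those results. This single mechanism handles both directions at once; there is no need here for the $\sigma$-petite matching machinery of section~\ref{sec:5.1b}, nor for a separate ``detecting set'' argument region by region. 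Your proposal would in principle reach the same answer, but it overlooks the structural reason the problem collapses to rank three.
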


\begin{proof}

The proof is a case by case analysis, which we sketch here.

Note that $\ep_1+\ep_2$ is the highest short root of $F_4.$ The
condition that $\C F$ be in the half-space $K$ means that the walls of
$\C F$ can only be of the form $\langle\al_l,\nu\rangle=1,$
$\langle\al,\nu\rangle=0$ (or $\nu_1+\nu_2=c$). From the partial order
relation among the (long) roots, we see that there are $19$ such
regions. A case by case analysis gives that each of these regions has
a wall of the form $\langle\al,\nu\rangle=0.$  

If $\C F$ has a wall of
the form $\langle\al,\nu\rangle=0,$ then on this wall, $X(\nu)$ is
unitarily induced irreducibly from a principal series $X_M(\nu')$ for
a Hecke subalgebra $H_M,$ with $M$ Levi of type $B_3$ or $C_3.$ In
\cite{BC}, theorem 3.4 and 3.6, the unitary irreducible spherical
principal series for the Hecke algebras of type
$B_n/C_n$ with arbitrary unequal parameters are determined, so in
particular we know the unitarity of $X_M(\nu'),$ and therefore of
$X(\nu).$ 

\end{proof}

One of the results of \cite{BC}, when the Hecke algebra $\bH$ if of type $B_n$ with
arbitrary parameter $c$, is that any region $\C F$ on which the
highest short root is greater than $c$, is necessarily not
unitary. The similar statement is false in $F_4,$ as seen in the following example.

\begin{example}
The region $\C F_8=\{\nu_1-\nu_2-\nu_3-\nu_4>1,\ 2\nu_4>1,\
\nu_1+\nu_3>c,\ \nu_1+\nu_4<c\}$ is unitary.
\end{example}

\begin{proof}
The region $\C F_8$ has a wall given by
$\langle\ep_2-\ep_3,\nu\rangle=0.$ We deform the parameter $\nu$ to
this wall, \ie\ $\nu_2=\nu_3$. The corresponding module $X(\nu)$ is
unitarily induced irreducible from the principal series
$X_{B_3}(\nu'),$ where $\nu'=(\nu_1',\nu_2',\nu_3')$ satisfy
$\nu_1'-\nu_2'>1,$ $\nu_2'-\nu_3'>1,$ and $\nu_1'<c.$ These parameters
$\nu'$ are unitary in $B_3$, \cf \cite{BC}, theorem 3.6. 
\end{proof}

In order to show that  the regions $\C F_1-\C F_8$ are the {\it only}
unitary regions, we determined by brute force computer calculations
the signatures of $r_\sigma(\nu)$, with $\sigma\in \{1_1,4_2,9_1\}$,
and $\nu\in \C F$, for $\C F$ any region in $\C C.$

\begin{corollary}
In the case of the geometric Hecke algebra $\bH(F_4,(1,2))$, the
$0$-complementary series is given by the regions $\C F_1-\C F_5$ from
proposition \ref{p:5.3}.
\end{corollary}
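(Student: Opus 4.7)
The plan is to deduce the corollary directly from proposition \ref{p:5.3} and the example preceding it by specializing to the parameter value $c=2$, which corresponds to the short-root label of $\bH(F_4,(1,2))$. The preceding discussion established (by the proposition together with the brute-force computer verification mentioned in the paragraph after the example) that for arbitrary $c>1$ the complete list of unitary regions in $\C C$ consists of $\C F_1$--$\C F_8$. So the corollary reduces to two tasks: (i) check that $\C F_1$--$\C F_5$ remain non-empty at $c=2$, and (ii) check that $\C F_6$, $\C F_7$, and $\C F_8$ become empty at $c=2$. Task (i) is immediate by exhibiting interior points—for instance, the origin lies in the closure of $\C F_1$, and each of $\C F_2$--$\C F_5$ is visibly open and non-empty at $c=2$.

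The substance is task (ii), which in each case combines the defining inequalities with the dominance conditions $\nu_1\geq \nu_2\geq \nu_3\geq \nu_4\geq 0$ coming from $\nu\in\C C$. For $\C F_6$, the condition $2\nu_2>1$ combined with $\nu_1-\nu_2-\nu_3-\nu_4>1$ and $\nu_3,\nu_4\geq 0$ forces $\nu_1>1+\nu_2>3/2$, so $\nu_1+\nu_2>2=c$, contradicting $\nu_1+\nu_2<c$. For $\C F_7$, the hypothesis $2\nu_4>1$ propagates through dominance to yield $\nu_2\geq\nu_3\geq\nu_4>1/2$, whence $\nu_1>1+\nu_2+\nu_3+\nu_4>5/2$ and $\nu_1+\nu_2>3$, again violating $\nu_1+\nu_2<2$. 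For $\C F_8$, the same estimate gives $\nu_1>5/2$, while $\nu_1+\nu_4<c=2$ forces $\nu_1<3/2$, a contradiction.

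There is no real obstacle here; the corollary is a pure arithmetic consequence of the parameter specialization, and the substantive unitarity analysis has already been carried out in proposition \ref{p:5.3} and the example. The only point worth flagging is that one must use the dominance of $\nu$ (not just non-negativity) in treating $\C F_7$ and $\C F_8$, since it is the chain $\nu_2\geq\nu_3\geq\nu_4$ that amplifies the hypothesis $2\nu_4>1$ into a lower bound on $\nu_1$ large enough to violate $\nu_1+\nu_2<c$ or $\nu_1+\nu_4<c$.
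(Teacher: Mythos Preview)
Your proposal is correct and follows exactly the paper's approach: the paper records only the parenthetical remark that at $c=2$ the regions $\C F_6$--$\C F_8$ become empty, while you supply the elementary arithmetic verification of this fact. One small point: the dominant chamber for these $F_4$ coordinates actually imposes $\nu_1\ge\nu_2+\nu_3+\nu_4$ (from $\al_1=\ep_1-\ep_2-\ep_3-\ep_4$) together with $\nu_2\ge\nu_3\ge\nu_4\ge 0$, which is stronger than the chain $\nu_1\ge\nu_2\ge\nu_3\ge\nu_4\ge 0$ you wrote; but since you only use the weaker consequences, your argument goes through unchanged.
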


(When $c=2,$ the regions $\C F_6-\C F_8$, in the notation as before,
become empty.) 

\subsection{$W$-structure}\label{green} We record the $W$-structure of standard
modules for $\bH(F_4,(1,2))$ as follows from the intertwining operator
calculations of previous sections. The correspondence is normalized so
that the nilpotent $E_7$ corresponds to the sign representation $1_4$,
and $(3A_1)''$ to the trivial $1_1$. For induced representations and
labeling of irreducible characters of
the Weyl group of $F_4$, see for example \cite{A}.

\begin{small}
\begin{center}
\begin{longtable}{|c|c|c|}
\caption{$W$-structure of standard modules of $\bH(F_4,(1,2))$}
\label{table:$W$-structure}\\
\hline
\multicolumn{1}{|c|}{$\mathbf\CO$} &\multicolumn{1}{c|}{$\mathbf
 {\mu(\CO,\phi)}$} &\multicolumn{1}{c|}{$W$-structure}\\
\hline 
\endfirsthead
\multicolumn{3}{c}%
{{ \tablename\ \thetable{} -- continued from previous page}}
\\
\endhead
\hline\hline
\endlastfoot

${E_7}$ &$1_4$ & $1_4$\\

\hline
${E_7(a_1)}$ &$2_4$ &$2_4$ \\

\hline
${E_7(a_2)}$ &$4_5$ &$4_5+1_4$ \\

\hline

${E_7(a_3)}$ &$8_4$ &$8_4+2_4$\\
${E_7(a_3)}$ &$1_2$ &$1_2$\\

\hline

$D_6$ &$9_4$ & $\Ind_{C_3}^{F_4}(0\times 111)$\\

\hline

$E_7(a_4)$ &$4_3$ &$4_3+1_2$\\
           &$2_2$ &$2_2$\\

\hline

$D_6(a_1)$ &$9_2$ &$\Ind_{C_3}^{F_4}(111\times 0)$\\

\hline

$D_5+A_1$ &$8_2$ &$\Ind_{B_3}^{F_4}(0\times 111)$\\

\hline

$E_7(a_5)$ &$12_1$ &$12_1+8_4+4_5+8_2+9_4+1_4$\\
           &$6_2$  &$6_2+4_5$\\

\hline

$D_6(a_2)$ &$16_1$ &$\Ind_{C_3}^{F_4}(1\times 11+0\times 111)$\\

\hline

$A_5+A_1$ &$6_1$ &$\Ind_{\wti A_2+A_1}^{F_4}((111)\otimes (11))$\\

\hline

$D_5(a_1)+A_1$ &$4_1$ &$\Ind_{B_3}^{F_4}(0\times 12)$\\

\hline

$(A_5)''$ &$8_1$ &$\Ind_{\wti A_2}^{F_4}((111))$\\

\hline

$D_4+A_1$ &$9_3$     &$\Ind_{C_2}^{F_4}(0\times 11)$\\

\hline

$A_3A_2A_1$ &$4_4$   &$\Ind_{A_2+\wti A_1}^{F_4}((111)\otimes (11))$ \\

\hline

$D_4(a_1)+A_1$ &$9_1$  &$\Ind_{B_3}^{F_4}(1\times 2+0\times 12)$\\

             &$2_1$   &$\Ind_{B_3}^{F_4}(0\times 3)$ \\

\hline

$A_3+2A_1$ &$8_3$     &$\Ind_{A_1+\wti A_1}^{F_4}((11)\otimes (11))$ \\

\hline

$(A_3+A_1)''$ &$4_2$   &$\Ind_{\wti A_1}^{F_4}((11))$ \\

\hline

$A_2+3A_1$    &$1_3$   &$\Ind_{A_2}^{F_4}((111))$ \\
\hline

$4A_1$       &$2_3$    &$\Ind_{A_1}^{F_4}((11))$ \\

\hline

$(3A_1)''$   &$1_1$    &$\Ind_{1}^{F_4}((1))$ \\

\hline

\end{longtable}
\end{center}
\end{small} 

As mentioned in the introduction, we verified that the dimensions of
the discrete series modules above agree with the dimensions obtained
from the weight spaces in \cite{Re1}. In \cite{Re1}, one uses the
realization of this algebra in $E_8$, and employs the notation of
\cite{Sp2} for nilpotent orbits, so to compare the two lists, one
needs to add an ``$A_1$'' to our notation of nilpotent orbits.



\end{document}